\definecolor{Gray}{gray}{0.9}
\definecolor{GrayS}{gray}{0.85}
\newtheorem{proposition}{\bf Proposition}
\newtheorem{lemma}{\bf Lemma}
\newtheorem{remark}{\bf Remark}
\def\enK{h_k}
\def\am{c_k}
\def\enS{h_s}
\def\llK{\ell_k}
\def\llS{\ell_s}
\def\Ubf{\bm U}
\def\bX{\bm{X}}
\def\bx{\bm{x}}
\def\be{\bm{e}}
\def\bPhi{\bm{\Phi}}
\def\R{\mathbb{R}}
\def\hsfun{{\sf h}_s}
\def\xiE{\xi_{\tiny E}}
\def\puE{p_{u_{\tiny{E}}}}
\def\xiEstar{\xi_{\tiny E}^*}
\def\xistar{{\xi^*}}
\def\ensstar{h_s^*}
\def\enBar{\bar{h}_s}
\def\enHat{\widehat{h}_s}
\def\rxi{r_{\xi}}
\def\hkoP{h_k^{{o,1}}}
\def\hsiP{h_s^{{i,1}}}
\def\hsiS{h_s^{{i,2}}}
\def\lkoP{\ell_k^{\ {o,1}}}
\def\lsiP{\ell_s^{\ {i,1}}}
\def\lsiS{\ell_s^{\ {i,2}}}
\def\uiP{u_{i,1}}
\def\viP{v_{i,1}}
\def\pviP{p_{v_{i,1}}}
\def\puiP{p_{u_{i,1}}}
\def\uoP{u_{o,1}}
\def\uiS{u_{i,2}}
\def\viS{v_{i,2}}
\def\puiS{p_{u_{i,2}}}
\def\domsigma{{\cal D}}
\def\bUps{{\bm{\upsilon}}}
\def\fpD{\widehat{{\bm{\upsilon}}}_{1}}
\def\fpS{\widehat{{\bm{\upsilon}}}_{2}}
\title{\bf On the Sun-shadow dynamics}
\author{Irene Cavallari, Giovanni F. Gronchi, Giulio Ba\`u}
\affil{Dipartimento di Matematica, Universit\`a di Pisa}
\begin{document}

\maketitle
\begin{abstract}
  We investigate the planar motion of a mass particle in a force field
  defined by patching Kepler's and Stark's dynamics. This model is
  called {\em Sun-shadow} dynamics, referring to the motion of an
  Earth satellite perturbed by the solar radiation pressure and
  considering the Earth shadow effect.  The existence of periodic
  orbits of brake type is proved, and the Sun-shadow dynamics is
  investigated by means of a Poincar\'e-like map defined by a quantity
  that is not conserved along the flow.  We also present the results
  of our numerical investigations on some properties of the
  map. Moreover, we construct the invariant manifolds of the
  hyperbolic fixed points related to the periodic orbits of brake
  type. The global picture of the map shows evidence of regular and
  chaotic behaviour.
\end{abstract}

% ============================================
\section{Introduction}

This paper deals with the study of a mass particle moving under the
alternated action of two different force fields: one of
Kepler's problem, the other of Stark's problem, where a
constant force is added to the central one.  This is a basic model to
study the short-period evolution of an Earth satellite which
alternately spends some time in the Earth shadow and some time in the
region where the solar radiation pressure acts.  We call this model
{\em Sun-shadow} dynamics.
  
The solar radiation pressure (srp) can become the main perturbation to
be added to the monopole term of the Earth, when the {\em area to
  mass} ratio of the satellite is large enough.  The importance of the
srp for accurately predicting the motion of artificial satellites of
the Earth was first shown in \cite{Musen_1960} and
\cite{PJShapiro_1960}.  Musen developed an analytic theory that was
applied to the Vanguard I satellite. In \cite{PJShapiro_1960} the
motions of the Echo balloon and the Beacon satellite were numerically
propagated. In both works it was found that the srp can seriously
affect the lifetime of an Earth satellite, especially when a particular
resonance
condition is satisfied. Since these pioneering studies, the effects of
srp have been investigated by many authors. For example, the long and
short-period variations of the orbital elements are discussed in
detail in \cite{MiNoFa_1987}.
  
A relevant aspect related to the srp perturbation is the passage
through the Earth shadow. Kozai \cite{Kozai_1963} was among the
first authors to treat the eclipse perturbation and understand its
effects. He developed a semi-analytic method to obtain the
first-order variations of the orbital elements when the srp is
switched off inside the Earth shadow. It is worth noting that, in
general, the accumulation of these short-period effects produces a
long-period drift of the semi-major axis, see \cite{MiNoFa_1987}.
Similarly, in \cite{Lidov_1969} Lidov described a semi-analytic
method to compute the secular variation of the osculating
parameters. His study showed that they all oscillate periodically,
except in two limiting cases in which either the argument of
periapsis or both the semi-major axis and the eccentricity vary
monotonically. Also Ferraz-Mello \cite{FMello_1972} studied the
possible secular effects induced by srp with the Earth shadow, by
developing an analytic theory in the Hamiltonian formalism; he
found that the angular drifts of the longitude, perigee, and
node are quite small.  A more recent paper, by Hubaux and Lema\^itre
\cite{HubLem_2013}, showed that successive crossings of the shadow
for long time spans (in the order of 1000 years) cause significant
oscillations of the orbital elements, with amplitudes and
frequencies that depend on the area-to-mass ratio. Moreover,
numerical experiments carried out in \cite{Hub_etal_2013} indicate
that the passage through the shadow is a source of instability for
space debris with high area-to-mass ratio at geostationary
altitudes.
  
The idea which inspired this paper emerges in
\cite{Beletsky_2001}. Beletsky proposed to apply Kepler's dynamics
inside the Earth shadow, and Stark's dynamics outside of it. His
qualitative analysis of the problem pointed out that the orbital
energy has leaps each time the shadow is crossed and that the argument
of periapsis, the semi-major axis and the eccentricity have
long-period oscillations. More precisely, the semi-major axis
decreases and the eccentricity increases while the apse line moves
away from the direction of the srp force. In this work we try to go
deeper into the subject. We consider the two-dimensional case where
the srp force lies in the plane of motion of the satellites. After
reviewing Stark's dynamics following \cite{Beletsky_1964} and
\cite{Beletsky_2001}, we describe some features occurring when we
alternate it with the dynamics of Kepler's problem, using separable
variables for the Hamilton-Jacobi equations of both problems. In
particular, we prove the existence of periodic orbits of brake type, which are close to the unstable brake periodic orbits of Stark's dynamics.
%here, the term brake denotes orbits which develop between two zero velocity points.
For a further description we introduce a Poincar\'e-like map
$\mathfrak{S}$, that we call Sun-shadow map. For this purpose, we
choose a section $\Sigma$ in the boundary of the shadow region, and
fix a quantity that has the same value when the section is crossed
with the right orientation (but is not conserved along the
flow). Then, we present the results of our numerical investigations:
we describe the domain of $\mathfrak{S}$, prove that the fixed points
related to the periodic orbits of brake type are hyperbolic, and
compute the stable and unstable manifolds of these points. These
manifolds are made of several connected components because there are
orbits that either collide with the Earth or go to infinity, therefore
they do not go back to $\Sigma$. Finally, a global picture of the map
is drawn, showing evidence of regular and chaotic behaviour.
    
The paper is organized as follows. In Section~\ref{s:KS} we
introduce Kepler's and Stark's dynamics using separable
coordinates for the Hamilton-Jacobi equations of the two problems. In
Section~\ref{s:Stark} there is the review of Stark's problem. We
investigate the alternation of the two different dynamics in
Section~\ref{s:sunshadow}, proving the existence of a family of periodic
orbits. In Section~\ref{s:ssmap} we define the Sun-shadow map and 
describe our numerical investigations.

% ============================================
\section{Kepler's and Stark's dynamics}
\label{s:KS}

We consider Kepler's dynamics, defined by
\begin{equation}
\ddot\bx = -\frac{\mu\bx}{|\bx|^3}, 
\label{shadow}
\end{equation}
with $\mu>0$ the Earth gravitation parameter and $\bx=(x,y)\in\R^2$.
Moreover, we take into account Stark's dynamics, given by
\begin{equation}
\ddot\bx = -\frac{\mu\bx}{|\bx|^3} + f\be_1, \qquad f>0.
\label{sun}
\end{equation}
Both equations \eqref{shadow} and \eqref{sun} can be written in Hamiltonian form, with
Hamilton's functions
\begin{align}
H_k &= \frac{1}{2}(p_x^2+p_y^2) -
\frac{\mu}{\sqrt{x^2+y^2}}, \label{HKep}\\
H_s &= \frac{1}{2}(p_x^2+p_y^2) -
\frac{\mu}{\sqrt{x^2+y^2}} - fx, \label{HSta}
\end{align}
where $p_x,p_y$ are the moments conjugated to $x,y$. Hereafter, the labels $k,s$ will stand for {\em Kepler} and {\em Stark}, respectively.

Besides $H_k$, the angular momentum
\begin{equation}
  C_k = p_y x - p_x y
  \label{am}
\end{equation}
and the Laplace-Lenz vector
\[
\bm{A}_k = \Bigl(-p_y(p_x y - p_y x) - \frac{\mu x}{\sqrt{x^2+y^2}}, p_x(p_xy
- p_yx) - \frac{\mu y}{\sqrt{x^2+y^2}}\Bigr)
\]
are first integrals of Kepler's dynamics.
Note that relation
\[
|\bm{A}_k|^2 = \mu^2 +  2H_kC_k^2
\]
holds among these integrals.
We denote by $L_k$ the opposite of the $x$-component of $\bm{A}_k$.
%\begin{equation}
%L_k = p_y(p_x y - p_y x) + \frac{\mu x}{\sqrt{x^2+y^2}}, 
%\label{llK}
%\end{equation}
%which is the opposite of the first component of $\bm{L}$,
%i.e. the component in the direction $\euno$ of the constant force.

On the other hand, besides $H_s$, Stark's dynamics has the first integral
\begin{equation}
L_s = p_y(p_x y - p_y x) + \frac{\mu x}{\sqrt{x^2+y^2}} - \frac{f}{2}y^2, 
\label{llS}
\end{equation}
which is a generalisation of $L_k$ (see \cite{Redmond_1964}), but there are no other integrals independent from $L_s$ and $H_s$.

% -----------------------------------------------------------------
\subsection{Hamilton-Jacobi equations and separation of variables}
\label{hamiltonjacobi}
The coordinate change $(x,y) \mapsto (u,v)$ defined by
\begin{equation}
  x = \frac{u^2-v^2}{2}, \hskip 1.5cm y = uv
\label{uv}
\end{equation}
separates the variables in the Hamilton-Jacobi equations of both
Kepler's and Stark's problems.
Relations \eqref{uv} can be completed to a canonical transformation
leading to new variables $(p_u,p_v,u,v)$:
%using the relations
\begin{equation}
\begin{split}
  &u = \pm\sqrt{x+\sqrt{x^2+y^2}}, \qquad v = y/u,\cr
  &p_u = up_x + vp_y, \qquad p_v = -vp_x + up_y.\cr
  \end{split}
\label{pupv}
\end{equation}
Also a transformation of the time variable $t$ can be performed by
introducing the {\em fictitious time} $\tau$ through the differential relation
\[%begin{equation}
\frac{d\tau}{dt}= \frac{1}{u^2+v^2}.
\]%end{equation}
Hereafter, we shall denote with a {\em prime} the derivative with respect to $\tau$.
Hamilton's functions for the two dynamics in these coordinates are
\begin{align}
\mathscr{H}_k &= \frac{p_u^2+p_v^2}{2(u^2+v^2)} - \frac{2\mu}{u^2+v^2}, \label{HKuv} \\ 
\mathscr{H}_s &= \frac{p_u^2+p_v^2}{2(u^2+v^2)} - \frac{2\mu}{u^2+v^2} - \frac{f}{2}(u^2-v^2). \label{HSuv}
\end{align}
Set $\Ubf = (p_u,p_v,u,v)^T$, where $T$ stands for vector transposition. Stark's and Kepler's dynamical systems can be written as 
\begin{equation}
{\Ubf}' = \bX_k({\Ubf}) = \left(
2\enK u, 2\enK v, p_u, p_v \right)^T,
\label{UDynSystK}
\end{equation}
\begin{equation}
{\Ubf}' = \bX_s({\Ubf}) = \left(
2\enS u+2fu^3, 2\enS v-2fv^3, p_u, p_v
\right)^T,
\label{UDynSystS}
\end{equation}
%with 
%\[
%\bX_k({\Ubf}) = \Big(
%2\enK u^2, 2\enK v^2, p_u, p_v \Big), \qquad \bX_s({\Ubf}) = \Big(
%2\enS u^2+2fu^3, 2\enS v^2-2fv^3, p_u, p_v
%\Big).
%\]
where $\enK$ and $\enS$ are the values of $\mathscr{H}_k $ and $\mathscr{H}_s$ for some given initial conditions. The angular momentum in the new coordinates is
\[
\mathscr{C}_k = \frac{1}{2}(p_v u - p_u v),
\]
while the integrals $L_k,L_s$ become
\begin{align}
 \mathscr{L}_k &= \frac{p_u^2v^2 - p_v^2u^2}{2(u^2+v^2)} +
 \mu\frac{u^2-v^2}{u^2+v^2}, \label{Lkuv} \\
 \mathscr{L}_s &= \frac{p_u^2v^2 - p_v^2u^2}{2(u^2+v^2)} +
 \mu\frac{u^2-v^2}{u^2+v^2} - \frac{f}{2}u^2v^2. \label{Lsuv}
\end{align}

Hamilton-Jacobi equations for the two problems are 
\begin{eqnarray}
 &&\biggl(\frac{\partial W_k}{\partial u}\biggr)^2+\biggl(\frac{\partial W_k}{\partial v}\biggr)^2 = 2\bigl(\enK(u^2+v^2) + 2\mu\bigr),\label{HJK}\\
&&\biggl(\frac{\partial W_s}{\partial u}\biggr)^2+\biggl(\frac{\partial W_s}{\partial v}\biggr)^2 = 2\bigl(\enS(u^2+v^2) + 2\mu\bigr) + f(u^4-v^4),\label{HJS}
\end{eqnarray}
where %$\enK, \enS$ are the values of $\mathscr{H}_k, \mathscr{H}_s$ for
%some given initial conditions and
$W_k,W_s$ are the unknown generating functions.
In \eqref{HJK} the variables are separated, so that we obtain 
\begin{equation}
\left\{
\begin{split}
&p_u^2 - 2(\enK u^2 + \mu) = \alpha_k,\cr
&p_v^2 - 2(\enK v^2 + \mu) = -\alpha_k,\cr
\end{split}
\right.
\label{kepl_sepvariables}
\end{equation}
where $\alpha_k$ is an integration constant.
%of the Hamilton-Jacobi equation. 
Let  $\llK$ be the value of the integral $\mathscr{L}_k$.
Substituting $p_u^2$, $p_v^2$ given by \eqref{kepl_sepvariables} into $\mathscr{L}_k(p_u,p_v,u,v) = \ell_k$
%gives
%\[
%2\llK(u^2+v^2) = p_u^2v^2 - p_v^2u^2 + 2\mu(u^2-v^2),
%\]
and simplifying we get
\[
\begin{split}
\alpha_k = 2\llK. 
\end{split}
\]
%Therefore we can write
%\begin{equation}
%\left\{
%\begin{split}
%&p_u^2 = 2(\en u^2 + \mu + \llK)\cr
%&p_v^2 = 2(\en v^2 + \mu - \llK)\cr
%\end{split}
%\right.
%\label{kepl_sepvariables}
%\end{equation}
In a similar way, from equation \eqref{HJS} we get
\begin{equation}
\left\{
\begin{split}
&p_u^2 - \bigl(2(\enS u^2 + \mu) + fu^4\bigr) = \alpha_s,\cr
&p_v^2 - \bigl(2(\enS v^2 + \mu) - fv^4\bigr) = -\alpha_s,\cr
\end{split}
\right.
\label{HJStark}
\end{equation}
%where $\enS$ is the value of the Hamiltonian $\mathscr{H}_s$.  Let
with
\[
\alpha_s = 2\llS,
\]
where $\llS$ is the value of the integral $\mathscr{L}_s$.
%Substituting $p_u^2, p_v^2$ given by \eqref{HJStark} into $\mathscr{L}_s(p_u,p_v,u,v) = \ell_s$
%\[
%2\llS(u^2+v^2) = p_u^2v^2 - p_v^2u^2 + 2\mu(u^2-v^2) - fu^2v^2(u^2+v^2)
%\]
%and simplifying we get
%\[
%\begin{split}
%2\llS(u^2+v^2) &= [2(\enS u^2 + 1) + fu^4 + \alpha_s]v^2 - [2(\enS v^2 + 1) - fv^4 - \alpha_s]u^2\cr
%&+ 2(u^2-v^2) - fu^2v^2(u^2+v^2) = \alpha_s(u^2+v^2).
%\end{split}
%\alpha_s = 2\llS.
%\]

%Therefore the system becomes
%\begin{equation}
%\left\{
%\begin{split}
%&p_u^2 - [2(\enS u^2 + \mu) + fu^4] = \llS\cr
%&p_v^2 - [2(\enS v^2 + \mu) - fv^4] = -\llS\cr
%\end{split}
%\right.
%\label{HJStark}
%\end{equation}

% ===============================================
\section{Trajectories in Stark's dynamics}
\label{s:Stark}

As explained in \cite{Beletsky_1964}, all the possible
trajectories of Stark's dynamics can be divided into four categories,
depending on the values $\llS,\enS$ of $\mathscr{L}_s,\mathscr{H}_s$.
%the energy and of the generalised Laplace-Lenz integral.

\noindent Relations
\begin{equation}
\begin{split}
&p_u = \frac{du}{d\tau}, \hskip 1cm p_v = \frac{dv}{d\tau}\cr
\end{split}
\label{pupvtau}
\end{equation}
yield
\begin{equation}
	\tau + A_1 = \int \frac{du}{\sqrt{U(u)}},
	\label{timeDef_u}
\end{equation}

\begin{equation}
\tau + A_2 = \int \frac{dv}{\sqrt{V(v)}},
\label{timeDef_v}
\end{equation}
where $A_1$ and $A_2$ are integration constants and
\begin{equation}
  U(u) = fu^4+2\enS u^2+2(\mu+\llS),
  \label{U}
\end{equation}
\begin{equation}
  V(v) = -fv^4+2\enS v^2+2(\mu-\llS)
  \label{V}
\end{equation}
correspond to the expressions of $p_u^2$, $p_v^2$ in
\eqref{HJStark}.

\noindent Conditions
\[
U(u)\geq 0, \qquad V(v)\geq 0
\]
restrict the possible configurations on the basis of the values $\llS$, $\enS$. Let us set
\[%begin{equation}
	 \xi = u^2, \hskip 1.5cm \eta = v^2.
\]%end{equation}
The polynomials $U(u)$ and $V(v)$ can be written as 
\[%begin{equation*}
U(u)= f(u^2-\xi_1)(u^2-\xi_2),\qquad V(v)= -f(v^2-\eta_1)(v^2-\eta_2),
\]%end{equation*}
where
\begin{equation}
\begin{split}
&\xi_1 = -\frac{\enS}{f}+\sqrt{\frac{\enS^2}{f^2}-\frac{2(\mu+\llS)}{f}}, \hskip 1cm \xi_2 = -\frac{\enS}{f}-\sqrt{\frac{\enS^2}{f^2}-\frac{2(\mu+\llS)}{f}},\cr
&\eta_1 = \frac{\enS}{f}+\sqrt{\frac{\enS^2}{f^2}+\frac{2(\mu-\llS)}{f}}, \hskip 1cm \eta_2 = \frac{\enS}{f}-\sqrt{\frac{\enS^2}{f^2}+\frac{2(\mu-\llS)}{f}}.\cr
\end{split}
\label{xi12eta12}
\end{equation}
Setting
\begin{equation}
%\begin{split}
	 {u}_1 = \sqrt{\xi_1}, \hskip 0.8cm {u}_2 = \sqrt{\xi_2},\hskip 0.8cm
	 {v}_1 = \sqrt{\eta_1}, \hskip 0.8cm {v}_2 = \sqrt{\eta_2},
%\end{split}
\end{equation}
the roots of $U(u)$ are $\pm {u}_1 , \pm {u}_2$, and those of
$V(v)$ are $\pm {v}_1, \pm {v}_2$.
%\[ \pm {u}_1 , \pm {u}_2, \pm {v}_1, \pm {v}_2 \]
 %where

\noindent It is convenient to study the problem in the $(\llS,
\enS/\sqrt{f})$ plane. Beletsky showed that this plane can be divided
into four regions as shown in Figure~\ref{fig:belRegions}. These
regions do not cover completely the $(\llS, \enS/\sqrt{f})$ plane: in
the remaining part (the brighter one in the figure) the motion is not
possible. Each region is characterised by different types of
trajectories listed in Table \ref{BelRegions}, see
\cite{Beletsky_1964}.  The admissible subsets of the configuration
space are shown in Figure~\ref{fig:admisreg}: these are delimited by
straight lines in the $(u,v)$ plane, and by parabolas in the $(x,y)$
plane. For completeness we added in Appendix~\ref{s:app} Tables
\ref{BelRegB} and \ref{BelRegBP}, describing the features of the
trajectories at boundaries of the regions.

\begin{figure}[h!]
	\centering 
   \includegraphics[clip=true,width=0.9\textwidth]{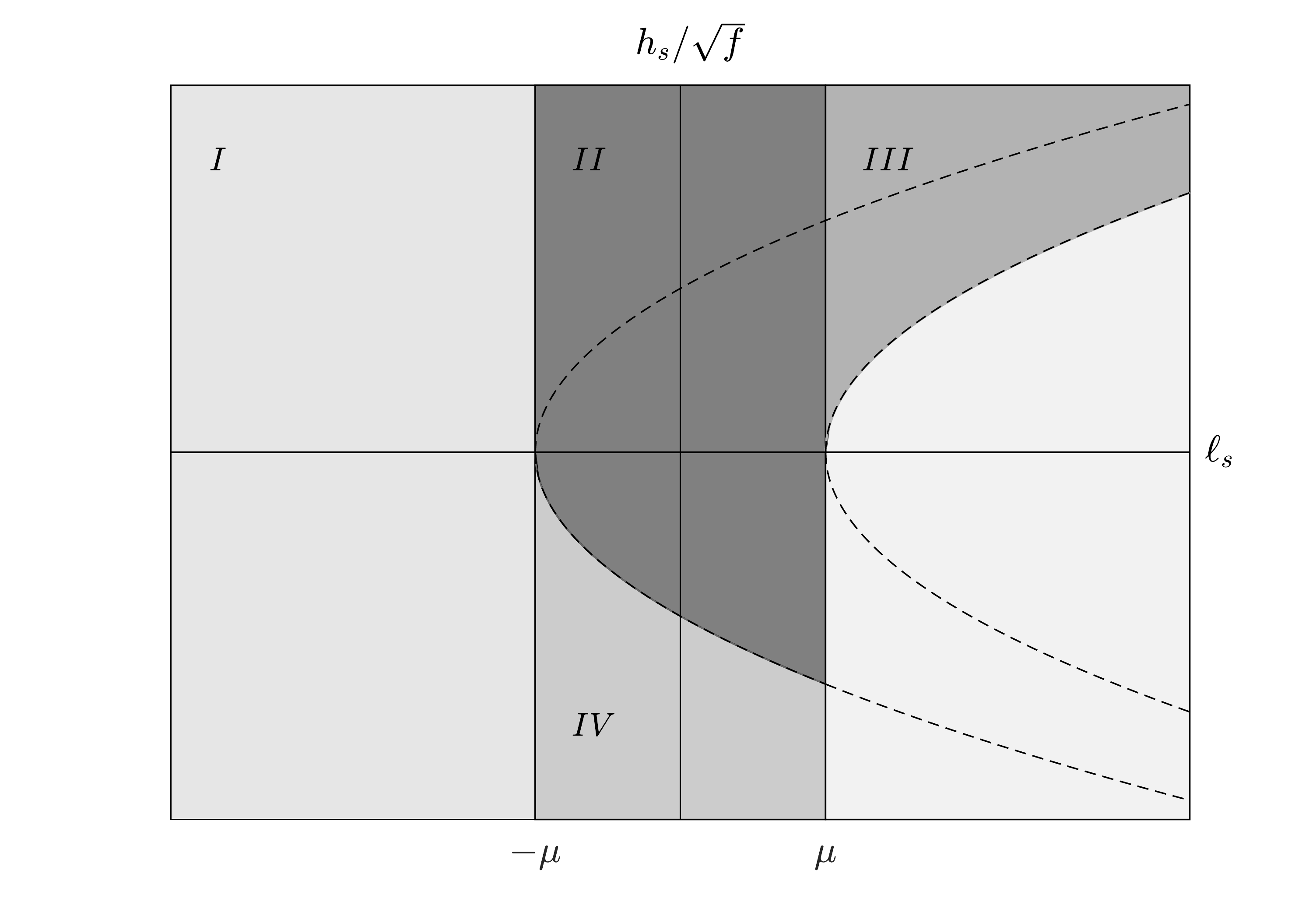}
  \caption{Stark's problem: the four regions in the
    $(\llS, h_s/\sqrt{f})$ plane.}
  \label{fig:belRegions}
\end{figure}

\begin{table}[h!]
  \caption{%Stark's problem: the four regions in the $(\llS,
           %h_s/\sqrt{f})$ plane. The features of the trajectories in
           %the $(x,y)$ plane are qualitatively described. $i$ is the
           %imaginary unit.
    Qualitative description of the trajectories in the $(x,y)$ plane
    for Stark's problem. We denote by $i$ the imaginary unit.}
  \centering
	\begin{tabular}{c!{\color{GrayS}\vrule}c}
		
	  %%%%%%%%%%%%%%%%%%%%%%%%%%%%%%%%%%%%%%%%%%%
		\rowcolor{Gray}
		{\textbf{Region $I$}} &
		{$\llS \in (-\infty,-\mu)$; $\enS/\sqrt{f} \in (-\infty,+\infty)$}\\
		\hline
		$V(v)$, $U(u)$ roots & 
		${v}_1 >0$, ${v}_2  \in i\mathbb{R}$, ${u}_1 >0$, ${u}_2 \in i\mathbb{R}$ \\
		%\hline
		$v$, $u$ variable &
		$v\in[-{v}_1,{v}_1]$, $u\in(-\infty,-{u}_1]\cup[{u}_1,+\infty)$ \\
		%\hline
		{trajectories type}&
		{unbounded, self-intersecting, not encircling the origin}\\[3pt]	
		%%%%%%%%%%%%%%%%%%%%%%%%%%%%%%%%%%%%%%%%%%%
                
		\rowcolor{Gray}
		{\textbf{Region $II$}} & 
		{$\llS \in (-\mu,\mu)$; $\enS/\sqrt{f} \in (-\sqrt{2(\mu+\llS)},+\infty)$}\\
		\hline
		$V(v)$, $U(u)$ roots &
		${v}_1 >0$, ${v}_2 \in i\R$, ${u}_1,{u}_2 \in \mathbb{C}\setminus\R$ \\
		%\hline
		$v$, $u$ variable &
		$v\in[-{v}_1,{v}_1]$, $u\in(-\infty,+\infty)$ \\
		%\hline	
		{trajectories type} & {unbounded, self-intersecting, encircling the origin}\\[3pt]		
		%%%%%%%%%%%%%%%%%%%%%%%%%%%%%%%%%%%%%%%%%%%
                
		\rowcolor{Gray}
		{\textbf{Region $III$}} & 
		{$\llS \in (\mu,+\infty)$; 	$\enS/\sqrt{f} \in (\sqrt{-2(\mu-\llS)},+\infty)$}\\
		\hline
		$V(v)$, $U(u)$ roots & 
		${v}_1 > {v}_2 >0$, ${u}_1,{u}_2 \in \mathbb{C}\setminus\R$ \\
		%\hline
		$v$, $u$ variable &
		$v\in[-{v}_1,-{v}_2]\cup[{v}_2,{v}_1]$,  $u\in(-\infty,+\infty)$ \\
		%\hline
		{trajectories type}&
		{unbounded, not self-intersecting}\\[3pt]
		%%%%%%%%%%%%%%%%%%%%%%%%%%%%%%%%%%%%%%%%%%%
                
		\rowcolor{Gray}
		{\textbf{Region $IV$}} &
		{	$\llS \in (-\mu,\mu)$; $\enS/\sqrt{f} \in (-\infty,-\sqrt{2(\mu+\llS)}\,)$ }\\
		\hline
		$V(v)$, $U(u)$ roots &
		${v}_1 >0$, ${v}_2 \in i\mathbb{R}$, ${u}_1>{u}_2>0$  \\
		%\hline
		$v$, $u$ variable &
		$v\in[-{v}_1,{v}_1]$, $u\in(-\infty,-{u}_1]\cup[-{u}_2,{u}_2]\cup[{u}_1,+\infty)$ \\
		%\hline
		{trajectories type}& 
		two types: bounded; unbounded,\\ & self-intersecting, not encircling the origin\\
		%\hline
	\end{tabular}
	\label{BelRegions}
\end{table}

% -------------------------------------

\begin{figure}[h!]
	\begin{center}
		\begin{subfigure}{0.43\textwidth}
			\includegraphics[width=1.1\textwidth]{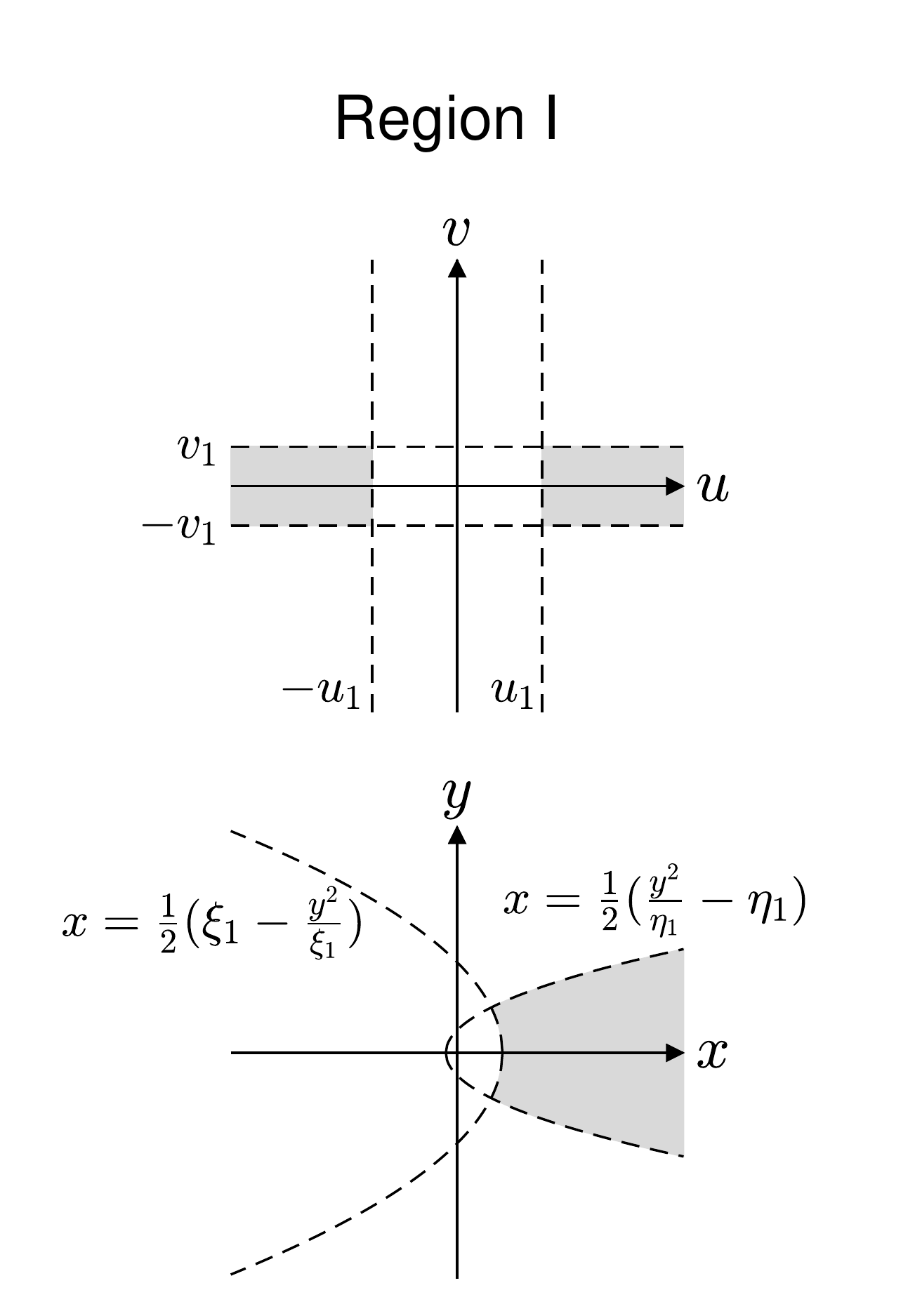}
		\end{subfigure}
		\begin{subfigure}{0.43\textwidth}
			\includegraphics[width=1.1\textwidth]{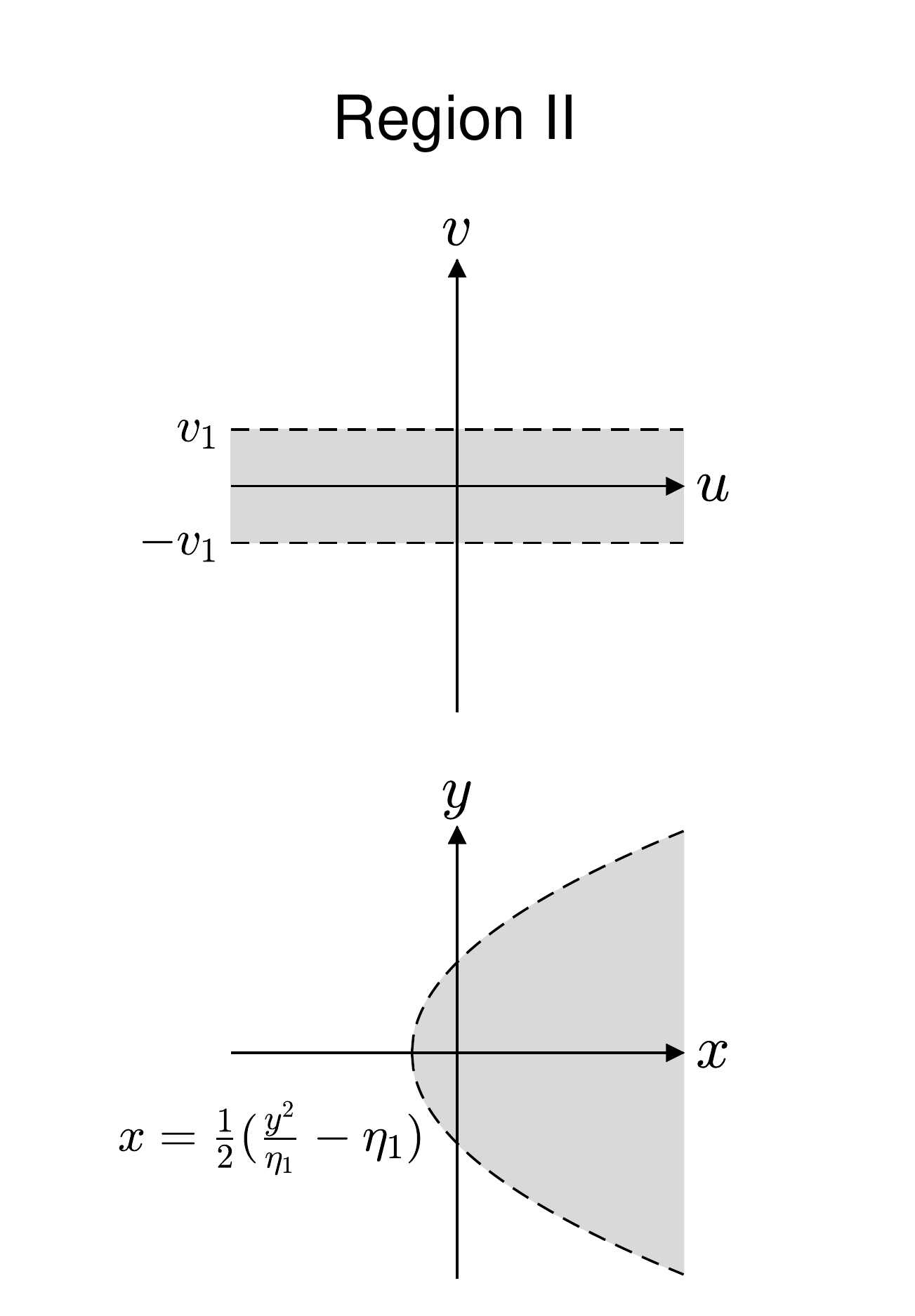}
		\end{subfigure}
		\begin{subfigure}{0.43\textwidth}
			\includegraphics[width=1.1\textwidth]{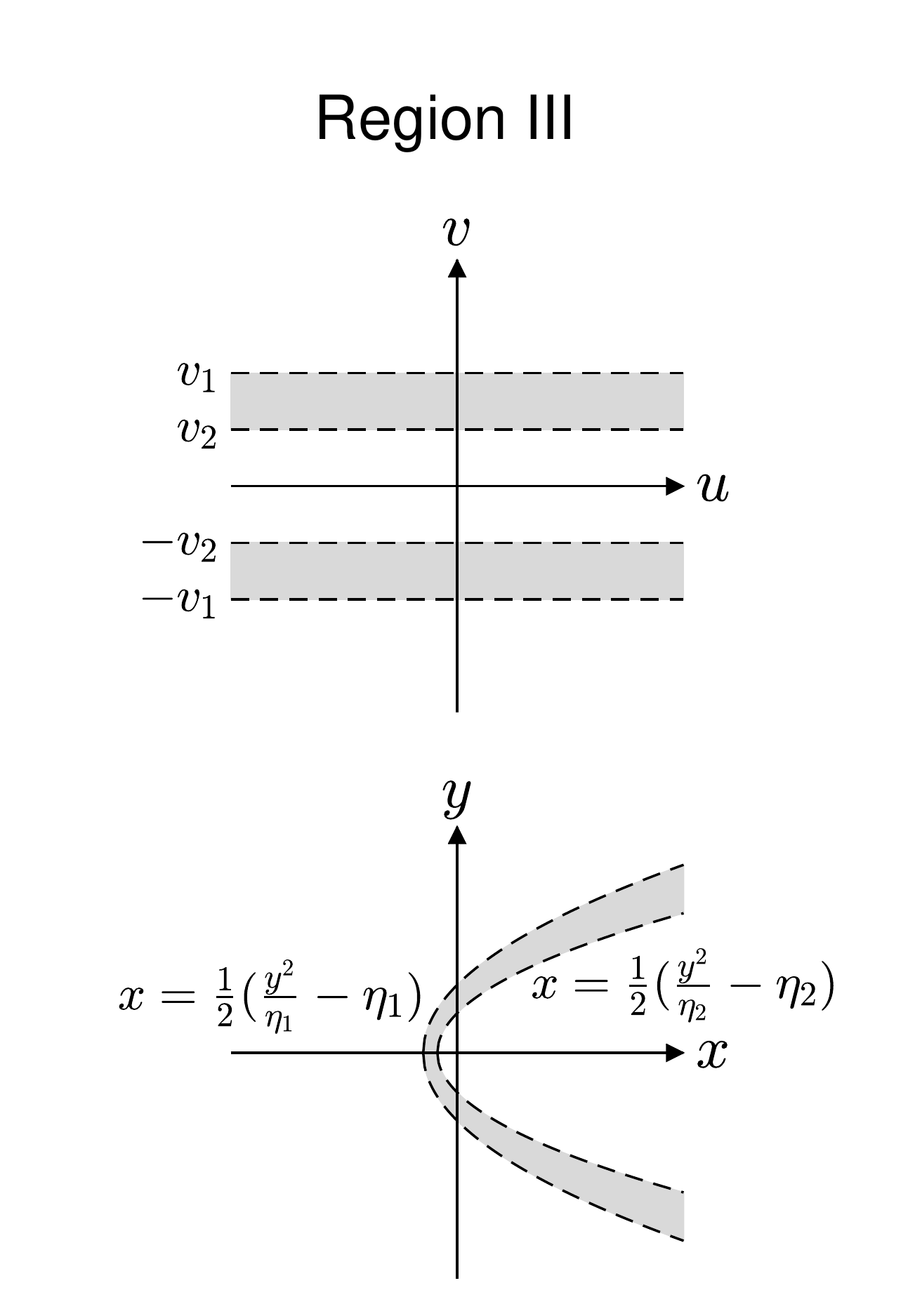}
		\end{subfigure}
		\begin{subfigure}{0.43\textwidth}
			\includegraphics[width=1.1\textwidth]{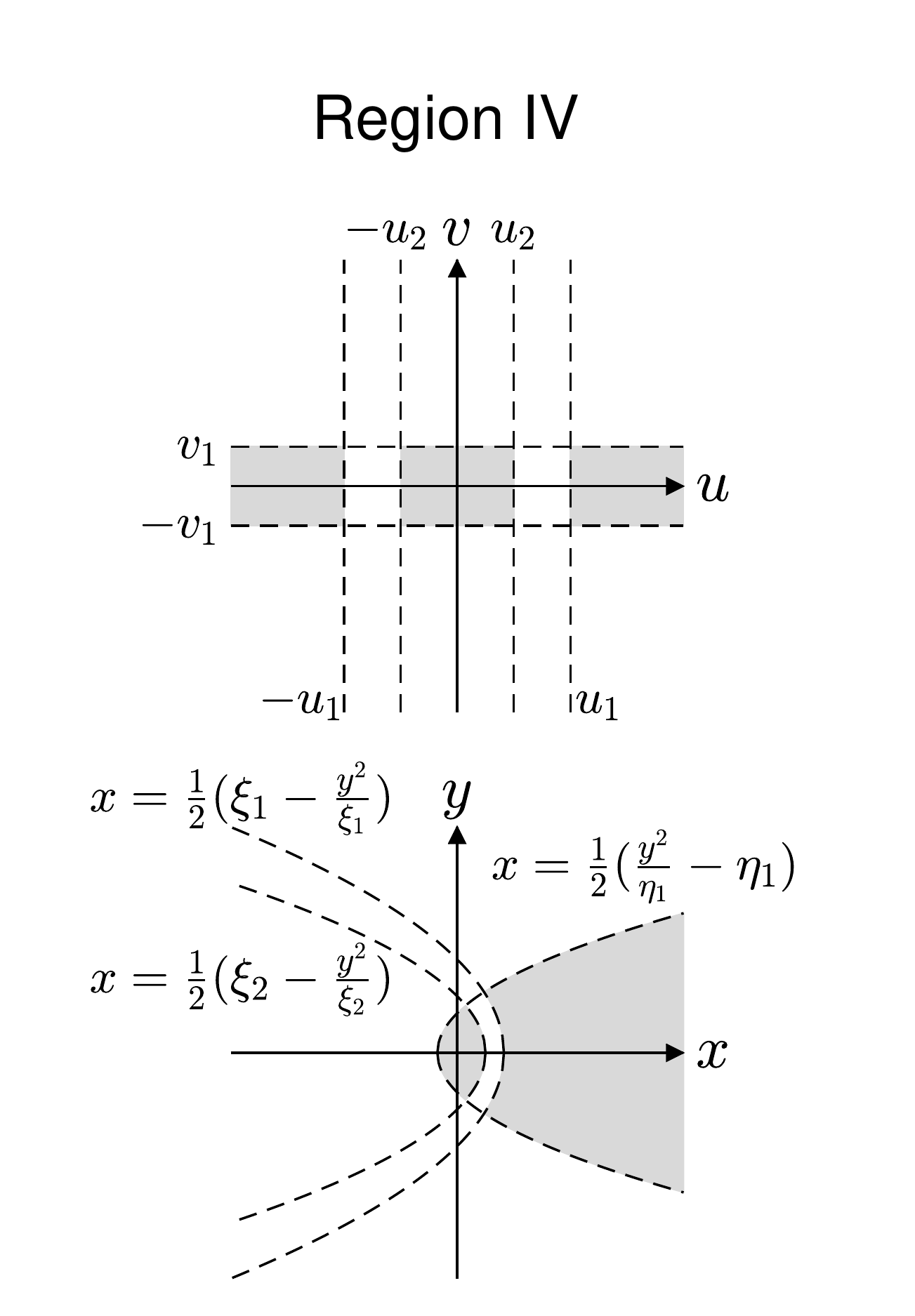}
		\end{subfigure}
		\caption{Admissible subsets of the configuration space in $(x,y)$ and $(u,v)$ planes, depending on the region. These subsets are represented by the grey areas; the dashed lines correspond to their boundaries.}
		\label{fig:admisreg}
	\end{center}	
\end{figure}

%------------------------------------------
\begin{remark}
We can have zero velocity points only for some values $\llS, \enS$ of the integrals $\mathscr{L}_s,\mathscr{H}_s$.
In region $I$ we have the two points
\[
(x,y) \in \{(\xi_1/2-\eta_1/2, \pm\sqrt{\xi_1\eta_1})\},
\]
and in region $IV$ the four points
\[
(x,y) \in \{(\xi_1/2-\eta_1/2, \pm\sqrt{\xi_1\eta_1}),
(\xi_2/2-\eta_1/2, \pm\sqrt{\xi_2\eta_1})\}.
\]
In regions $II$ and $III$ there cannot be zero velocity points.
\label{r:zerovel}
\end{remark}

% ------------------------------------------------
\begin{remark}
	\label{remark_rIV}
For each region of the $(\llS, \enS/\sqrt{f})$ plane, the $v$-component of an orbit is periodic (and bounded). The $u$-component is periodic (and bounded) only if $(\llS,\enS/\sqrt{f})$ belongs to region $IV$ and $u^2\leq \xi_2$. On the contrary, the $u$-component is unbounded if $(\llS,\enS/\sqrt{f})$ belongs to one among regions $I$, $II$, $III$, or it belongs to region $IV$, and $u^2\geq\xi_1$. 
\end{remark} %region 4, Beletsky

\begin{proposition}
In case $u(\tau)$ and $v(\tau)$ are periodic solutions of \eqref{UDynSystS}, relation
\begin{equation}
\frac{T_v}{T_u}< 1
\label{perrap}
\end{equation}
holds for their periods $T_u, T_v$.
\end{proposition}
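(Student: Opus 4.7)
The plan is to first invoke Remark~\ref{remark_rIV} to restrict to the only regime in which both $u(\tau)$ and $v(\tau)$ can simultaneously be periodic: $(\llS,\enS/\sqrt{f})$ lies in Region~$IV$ and $u^2\leq\xi_2$. In this case $u$ oscillates between $\pm\sqrt{\xi_2}$ and $v$ oscillates between $\pm\sqrt{\eta_1}$, and the two motions are uncoupled in the fictitious time $\tau$ thanks to the separation of variables performed in Section~\ref{hamiltonjacobi}.

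Next I would write each period as a complete quadrature. Using $p_u=du/d\tau$ together with $p_u^2 = U(u)$ (and analogously for $v$) and exploiting the evenness of $U$ and $V$, one obtains
\[
T_u \;=\; 4\int_{0}^{\sqrt{\xi_2}}\frac{du}{\sqrt{U(u)}},\qquad T_v \;=\; 4\int_{0}^{\sqrt{\eta_1}}\frac{dv}{\sqrt{V(v)}},
\]
with $U(u)=f(\xi_1-u^2)(\xi_2-u^2)$ on the bounded $u$-interval and $V(v)=f(\eta_1-v^2)(v^2-\eta_2)$, noting that $\eta_2<0$ in Region~$IV$. The natural rescalings $u=\sqrt{\xi_2}\,s$ and $v=\sqrt{\eta_1}\,s$ bring both integrals to the common domain $s\in[0,1]$ and factor out the same singular weight $(1-s^2)^{-1/2}$, reducing the comparison of $T_u$ with $T_v$ to a pointwise comparison of the two quadratic polynomials $\xi_1-\xi_2 s^2$ and $\eta_1 s^2-\eta_2$.

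The core of the argument is then the pointwise inequality $\eta_1 s^2-\eta_2 > \xi_1-\xi_2 s^2$ on $[0,1]$. Here I would exploit the symmetric structure of the formulas~\eqref{xi12eta12}: the centres $\mp\enS/f$ in $\xi_{1,2}$ and $\pm\enS/f$ in $\eta_{1,2}$ cancel in the crossed sums, producing the identity
\[
\xi_2+\eta_1 \;=\; -(\xi_1+\eta_2) \;=\; \sqrt{\tfrac{\enS^2}{f^2}+\tfrac{2(\mu-\llS)}{f}}-\sqrt{\tfrac{\enS^2}{f^2}-\tfrac{2(\mu+\llS)}{f}},
\]
which is strictly positive because $\mu>0$ forces the first radicand to strictly exceed the second. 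Setting $\delta:=\xi_2+\eta_1>0$, the difference $(\eta_1 s^2-\eta_2)-(\xi_1-\xi_2 s^2)$ collapses to $\delta(1+s^2)>0$, which yields the desired strict pointwise inequality between the two integrands and hence $T_v<T_u$.

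The only step that requires a small amount of insight is spotting the cancellation $\xi_2+\eta_1=-(\xi_1+\eta_2)$; once the quadratures have been normalised to the same interval and this identity is recognised, the rest of the argument is purely algebraic. I do not anticipate any difficulty at the endpoint $s=1$: both integrands carry the same integrable singularity $(1-s^2)^{-1/2}$, so the strict pointwise inequality on $[0,1)$ is preserved by integration.
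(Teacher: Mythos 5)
Your proof is correct, and it takes a genuinely different route from the paper's. After restricting (as the paper implicitly does) to region $IV$ with $u^2\le\xi_2$, you normalise both quadratures to $[0,1]$ and compare integrands pointwise; the key identity $\xi_2+\eta_1=-(\xi_1+\eta_2)=\sqrt{\enS^2/f^2+2(\mu-\llS)/f}-\sqrt{\enS^2/f^2-2(\mu+\llS)/f}>0$ is right (the difference of the radicands is $4\mu/f>0$), and it does collapse the difference of the two quadratic factors to $(\xi_2+\eta_1)(1+s^2)>0$, so $T_v<T_u$ follows since both integrands share the same integrable $(1-s^2)^{-1/2}$ singularity. The paper instead passes to the trigonometric form of the complete elliptic integrals, $T_u=2\pi/(\sqrt{f}\,M(a_u,b_u))$ and $T_v=2\pi/(\sqrt{f}\,M(a_v,b_v))$ with $a_u^2=\xi_1$, $b_u^2=\xi_1-\xi_2$, $a_v^2=\eta_1-\eta_2$, $b_v^2=-\eta_2$, proves the chain $b_u<a_u<b_v<a_v$, and concludes from the elementary bounds $b\le M(a,b)\le a$ on the arithmetic--geometric mean. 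Your argument is more self-contained (no AGM representation needed, and strictness of the inequality is immediate at the integrand level), while the paper's is more compact and reuses machinery that reappears later, e.g.\ in the bound for $\tau_v$ in Lemma~\ref{lemmaTauTav}. In fact the two are closely related: under $s=\sin\varphi$ your pointwise inequality becomes $a_u^2\cos^2\varphi+b_u^2\sin^2\varphi<b_v^2\cos^2\varphi+a_v^2\sin^2\varphi$, i.e.\ exactly the crossed comparisons $a_u^2<b_v^2$ and $b_u^2<a_v^2$ contained in the paper's ordering, so your identity is the integrand-level statement underlying the paper's AGM argument.
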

\begin{proof}
The periods $T_u$ and $T_v$ of the $u$ and $v$ variables can be written as elliptic integrals:
\[
T_u = \frac{4}{\sqrt{f}}\int_0^{\pi/2}\frac{d\varphi}{\sqrt{a_u^2\cos^2\varphi + b_u^2\sin^2\varphi}},
\qquad
T_v = \frac{4}{\sqrt{f}}\int_0^{\pi/2}\frac{d\varphi}{\sqrt{a_v^2\cos^2\varphi + b_v^2\sin^2\varphi}},
\]
where
\begin{align*}
&a_u^2 = \xi_1 = -\frac{\enS}{f} + \sqrt{\frac{\enS^2}{f^2} - \frac{2(\mu+\llS)}{f}},\qquad
  b_u^2 = \xi_1-\xi_2=2\sqrt{\frac{\enS^2}{f^2} - \frac{2(\mu+\llS)}{f}},\\
&a_v^2 = \eta_1-\eta_2= 2\sqrt{\frac{\enS^2}{f^2} + \frac{2(\mu-\llS)}{f}}, \qquad
  b_v^2 = -\eta_2 = -\frac{\enS}{f} + \sqrt{\frac{\enS^2}{f^2} + \frac{2(\mu-\llS)}{f}}.
\end{align*}
Let $a_u,b_u,a_v,b_v$ be the positive square roots of the previous quantities.
We note that
\begin{equation}
b_u < a_u < b_v < a_v. 
\label{ordered}
\end{equation}
Denoting by $M(a,b)$ the arithmetic-geometric mean of two real numbers $a,b$ (see \cite{Cox_1984}), we have
\[
T_u = \frac{2\pi}{\sqrt{f} M(a_u,b_u)},\qquad T_v = \frac{2\pi}{\sqrt{f} M(a_v,b_v)}.
\]
Since $b_u\leq M(a_u,b_u)\leq a_u$, and $b_v\leq M(a_v,b_v)\leq a_v$,
from \eqref{ordered} we obtain
\[
M(a_u,b_u) < M(a_v,b_v),
\]
that corresponds to \eqref{perrap}.

\end{proof}

% ------------------------------------------------------
\subsection{Unstable periodic orbits of {\em brake} type}
\label{s:brakestark}

There exists a family of unstable periodic orbits of brake type, ${\bx^*} = {\bx^*}(t;\llS)$,
parametrised by $\llS\in (-\mu, \mu)$. 

It is possible to analyse the behaviour of the $u$ and $v$-components
of the trajectory in the reduced phase spaces with coordinates $(u, p_u)$ and
$(v, p_v)$. For this purpose we can take into account the two Hamiltonian dynamics defined by
%$\mathsf{H_{s_u}}, \mathsf{H_{s_v}}$, given by
\[
\mathsf{H_{s_u}} = \frac{p_u^2}{2 u^2} - \frac{2(\mu+\llS)+ fu^4 }{2u^2},
\qquad
\mathsf{H_{s_v}} = \frac{p_v^2}{2v^2} - \frac{2(\mu-\llS)-fv^4}{2v^2},
\]  
obtained from system \eqref{HJStark}. 
$\mathsf{H_{s_u}}$ has the two critical points $(p_u^*,\pm u^*)$, where 
\[
p_u^* = 0, \qquad u^* = \left(\frac{2(\mu+\llS)}{f}\right)^{\frac{1}{4}}.
\]
We can show that they are two unstable equilibrium points for the reduced
dynamics in the $(u,p_u)$ plane. In fact, the Jacobian of the
Hamiltonian vector field induced by $\mathsf{H_{s_u}}$, evaluated in
both critical points, is
\[
D\mathsf{H_{s_u}} = \begin{bmatrix}
0 & \frac{1}{\xistar}\\ 4f & 0 \\
\end{bmatrix},
\]
where 
\begin{equation}
	\xistar={u^*}^2.
	\label{csi_star}
\end{equation}
%The eigenvalues of $J_{H_{s_u}}$ are
%$\lambda_{1,2}=\pm2\sqrt{\frac{f}{\xistar}}$: they are real and
%different from zero.
At these critical points, the value of $\mathsf{H_{s_u}}$ is
\begin{equation}
	\ensstar = -\sqrt{2f(\mu+\llS)},
		\label{hs_star}
\end{equation}
 so that
\begin{equation}
\xistar = -\frac{\ensstar}{f}
\label{xistar2}
\end{equation}
and $\det D\mathsf{H_{s_u}}<0$ at $(p_u^*,\pm u^*)$. 

The level set $\mathsf{H_{s_v}} = h_s^*$ is a closed curve in the
$(v,p_v)$ plane (see Figure~\ref{fig:levelCurves}). Thus, the
$v$-component is periodic. This implies that, if $\llS \in (-\mu,\mu)$
and Stark's Hamiltonian $\mathscr{H}_s$ has the value $\ensstar$, we
have two unstable periodic orbits in the $(u,v)$ plane. They have a
constant value of the $u$-component, equal to $\pm u^*$, and a constant
value of $p_u$, equal to zero. Moreover, they are of brake type
because each of them develops between two zero velocity points. These
are given by $(u^*,\pm v_1^*)$, with $v_1^*=v_1(\ensstar)$, in one
case, and by $(-u^*, \pm v_1^*)$ in the other. In the $(x,y)$ plane,
they correspond to the same periodic orbit $\bx^*$, which is a
parabolic arc and develops between the zero velocity points
$(\xistar/2-\eta_1^*/2, \pm\sqrt{\xistar\eta_1^*})$, where
$\eta_1^*=v_1^{*2}$. Its trajectory is shown in
Figure~\ref{fig:sBOrbit}.

The family of brake orbits $\bx^*$ exists for values $\llS, \enS$
corresponding to the boundary between regions $II$ and $IV$.
On this boundary, the values of $\xi_1$
and $\xi_2$ coincide and are equal to $\xistar$.

\begin{remark}
For $\enS<\ensstar$ (region $IV$), %$\xi_1$ and $\xi_2$ do not coincide anymore and 
we have $\xi_2<\xistar<\xi_1$.
\end{remark}

\begin{figure} [H]
	\begin{center}
		\includegraphics[width=1\textwidth]{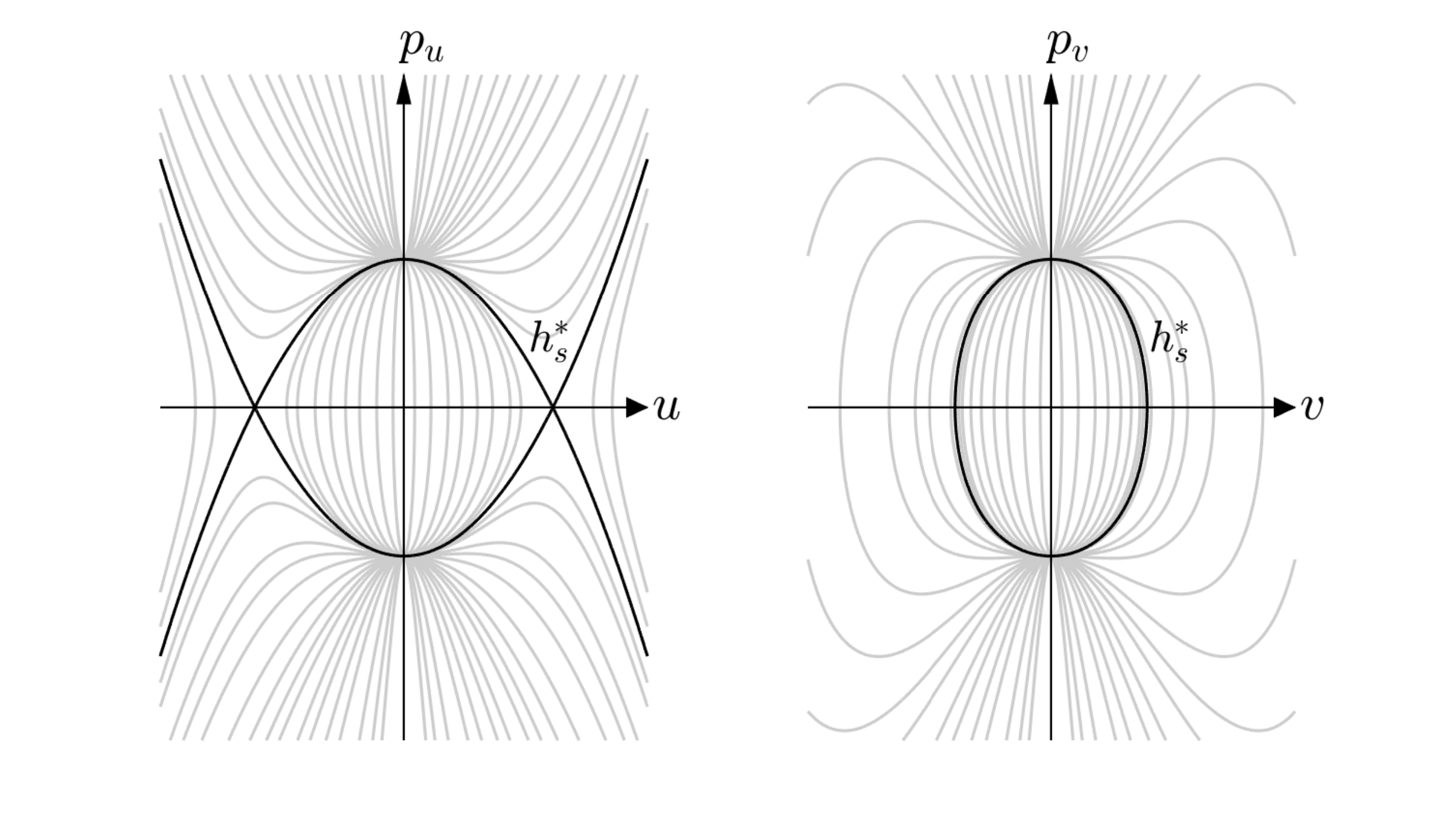}
		\caption{Stark's problem: phase portraits in the $(p_u,u)$ and $(p_v,v)$ planes, for $\llS = 119580$ $\kilo\cubic\meter\per\squaren\second$, $f=9.12\times 10^{-9}$ $\kilo\meter\per\squaren\second$.}
		\label{fig:levelCurves}
	\end{center}	
\end{figure}

\begin{figure} [h!]
	\begin{center}
		\includegraphics[width=0.6\textwidth]{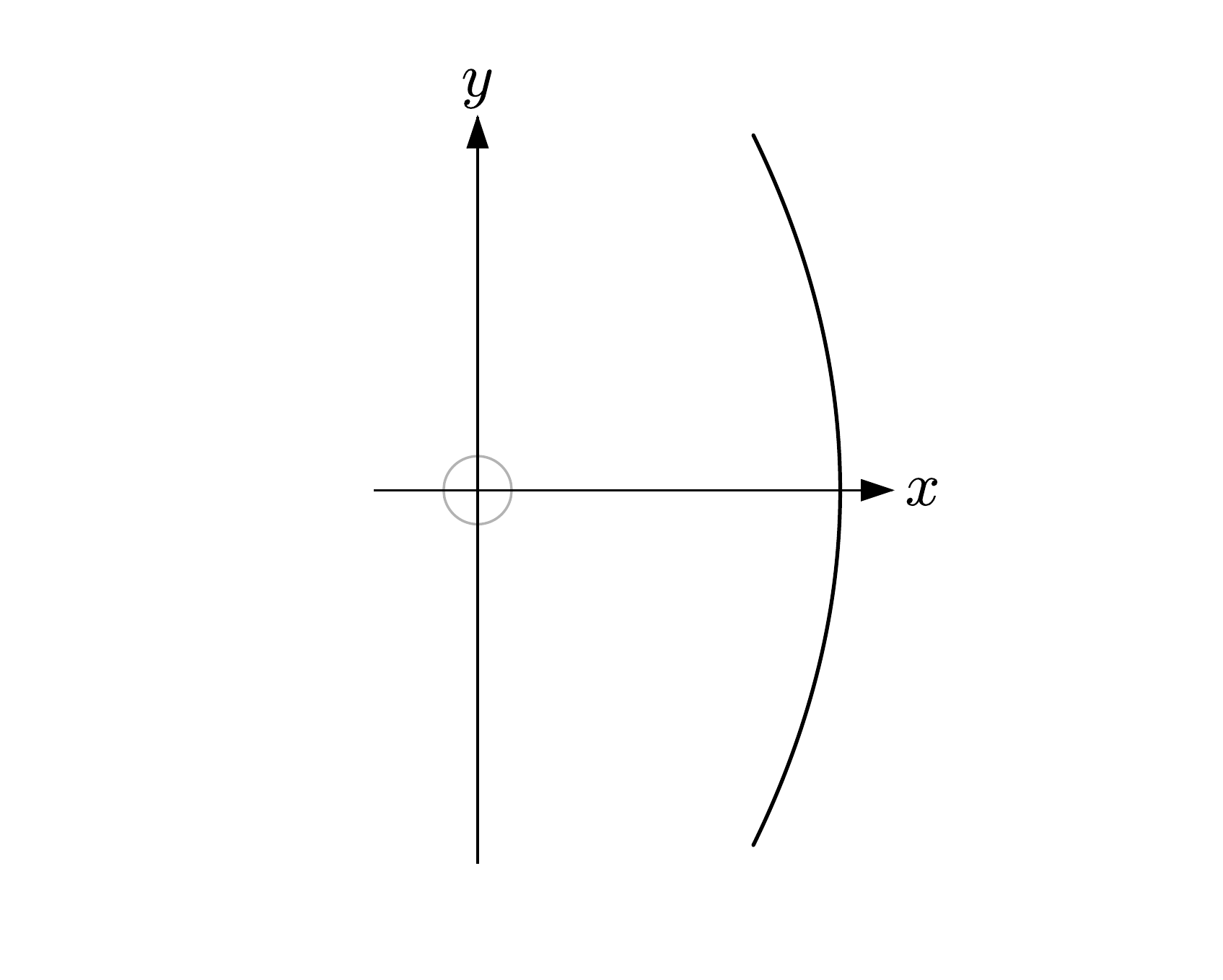}
		\caption{Stark's problem: unstable periodic orbit of brake type.}
		\label{fig:sBOrbit}
	\end{center}	
\end{figure}

\begin{figure}[h!]
	\begin{center}
		\begin{subfigure}{0.45\textwidth}
			\includegraphics[width=1.1\textwidth]{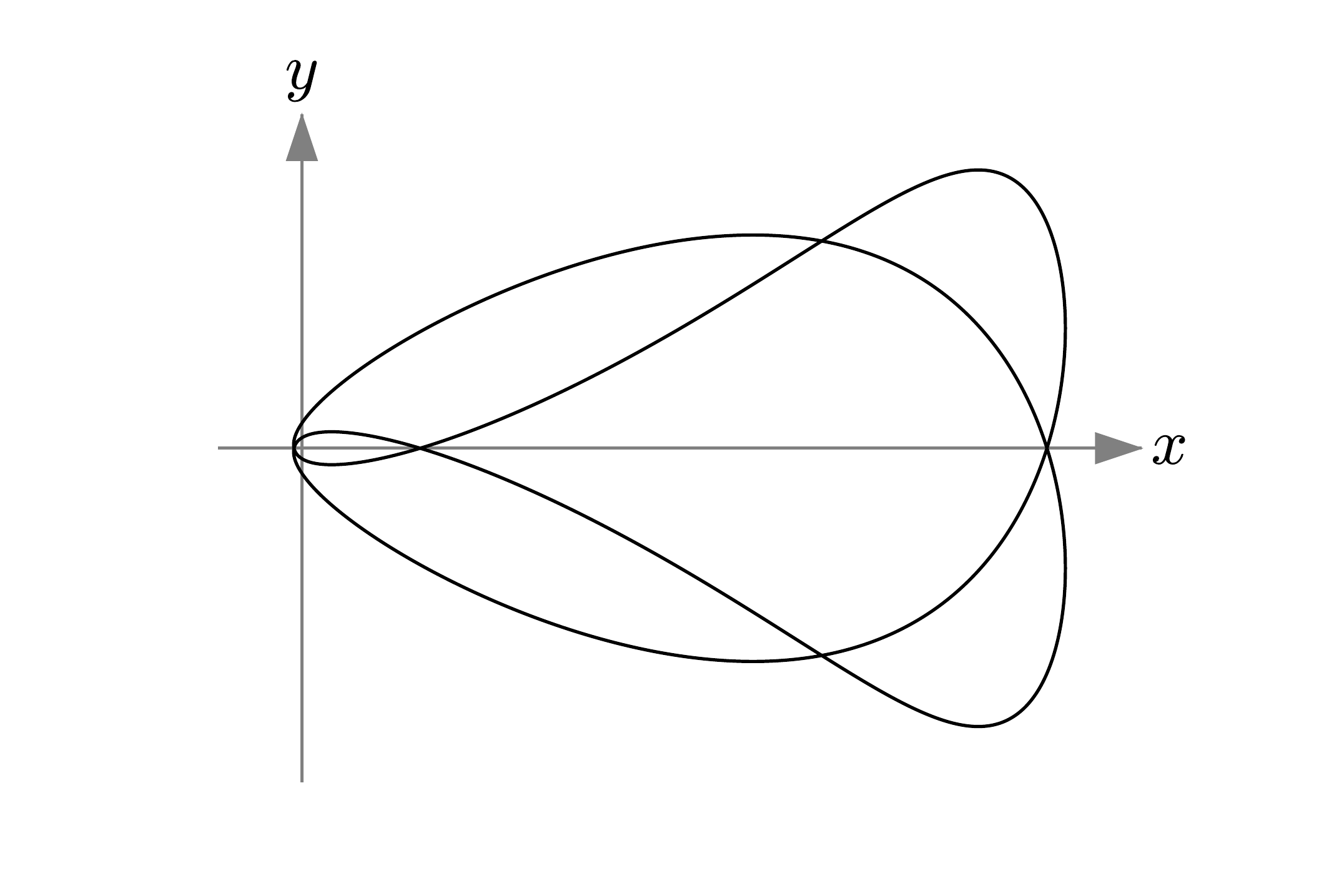}
			\caption{$T_u=2T_v$}%$\frac{T_v}{T_u}=\frac{1}{2}$}
			\label{fig:pSO1}
		\end{subfigure}
		\begin{subfigure}{0.45\textwidth}
			\includegraphics[width=1.1\textwidth]{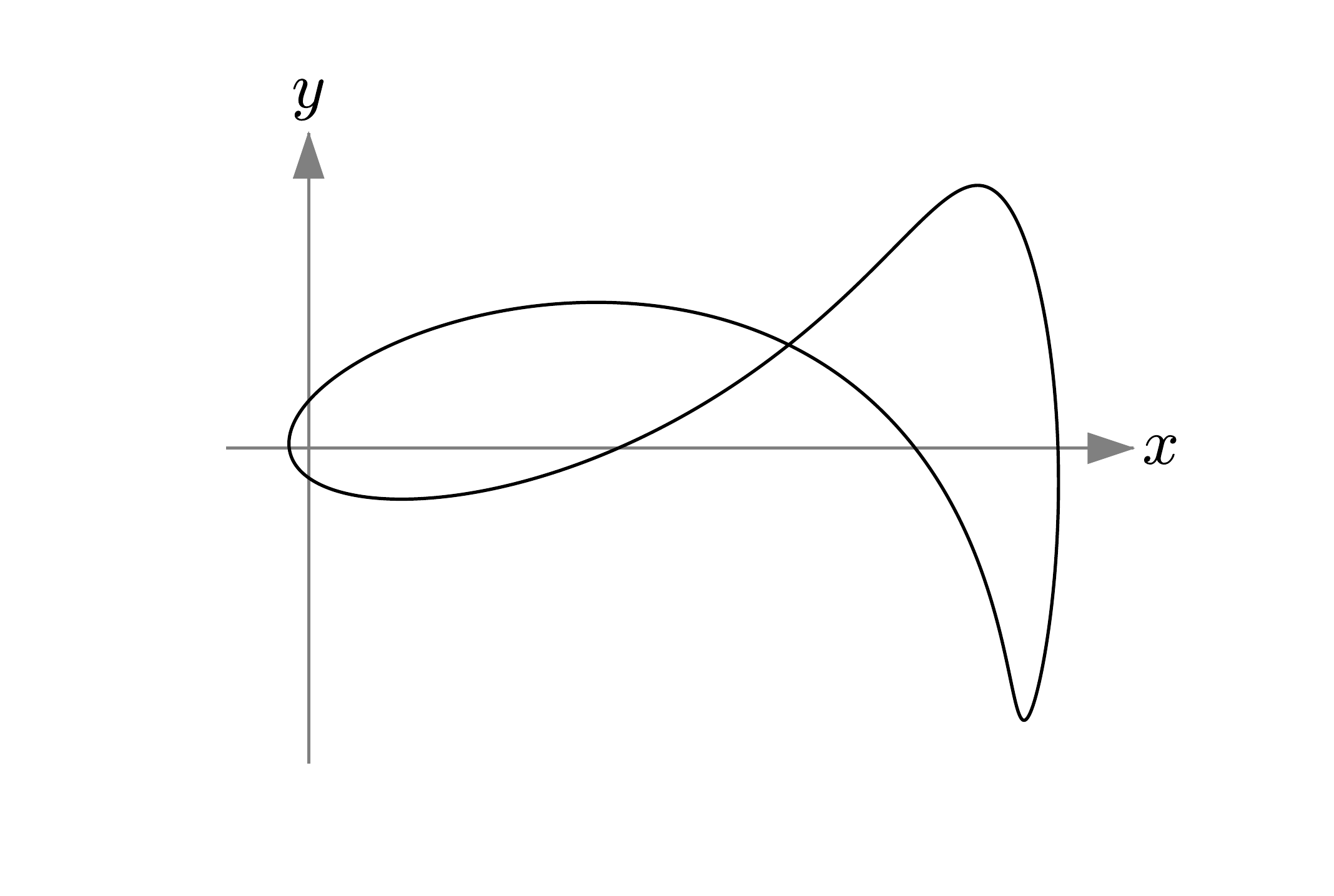}
			\caption{$T_u=3T_v$}%{$\frac{T_v}{T_u}=\frac{1}{3}$}
			\label{fig:pSO2}
		\end{subfigure}
		\begin{subfigure}{0.45\textwidth}
			\includegraphics[width=1.1\textwidth]{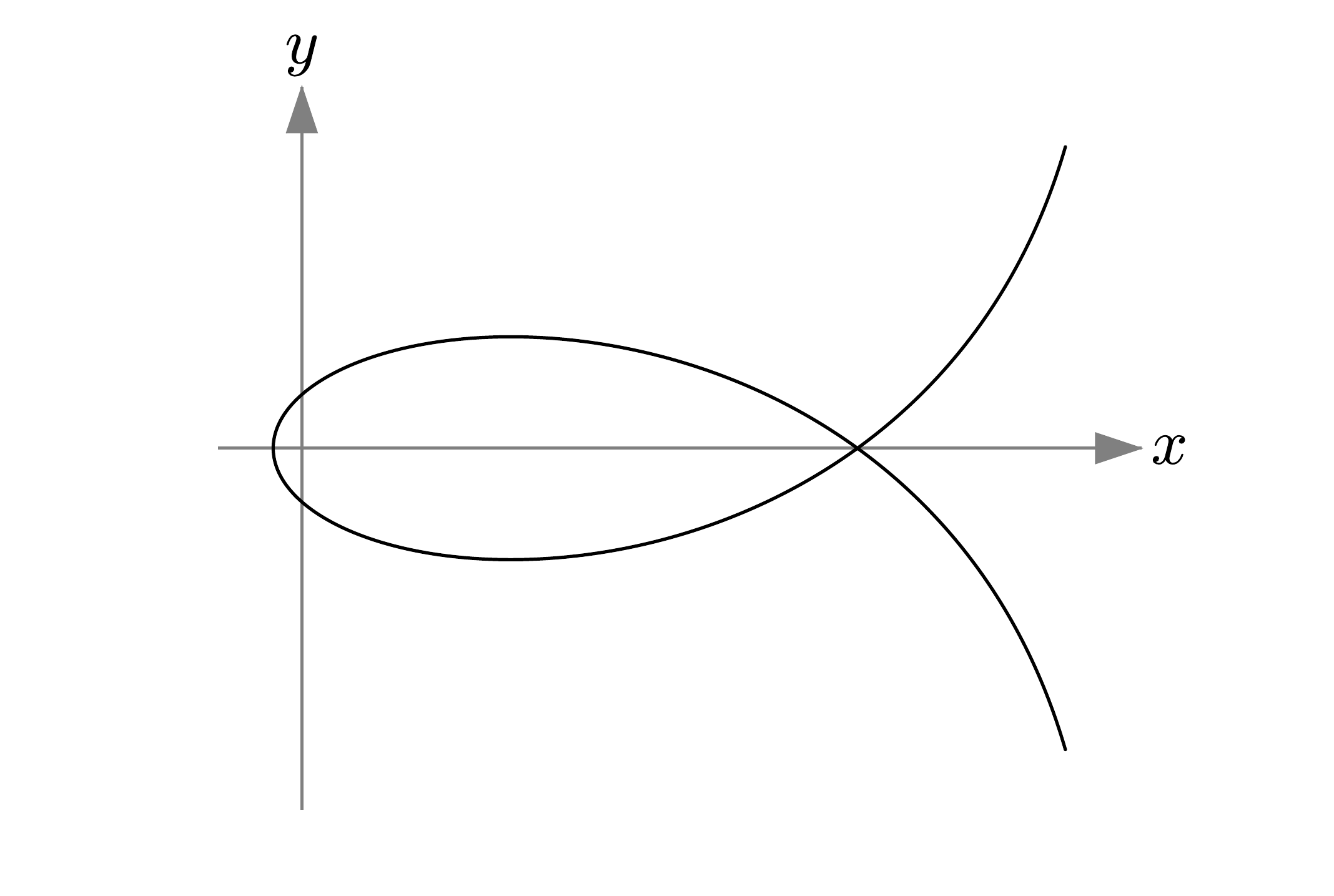}
			\caption{$T_u=2T_v$}%{$\frac{T_v}{T_u}=\frac{1}{2}$}
			\label{fig:pSO3}
		\end{subfigure}
		\begin{subfigure}{0.45\textwidth}
			\includegraphics[width=1.1\textwidth]{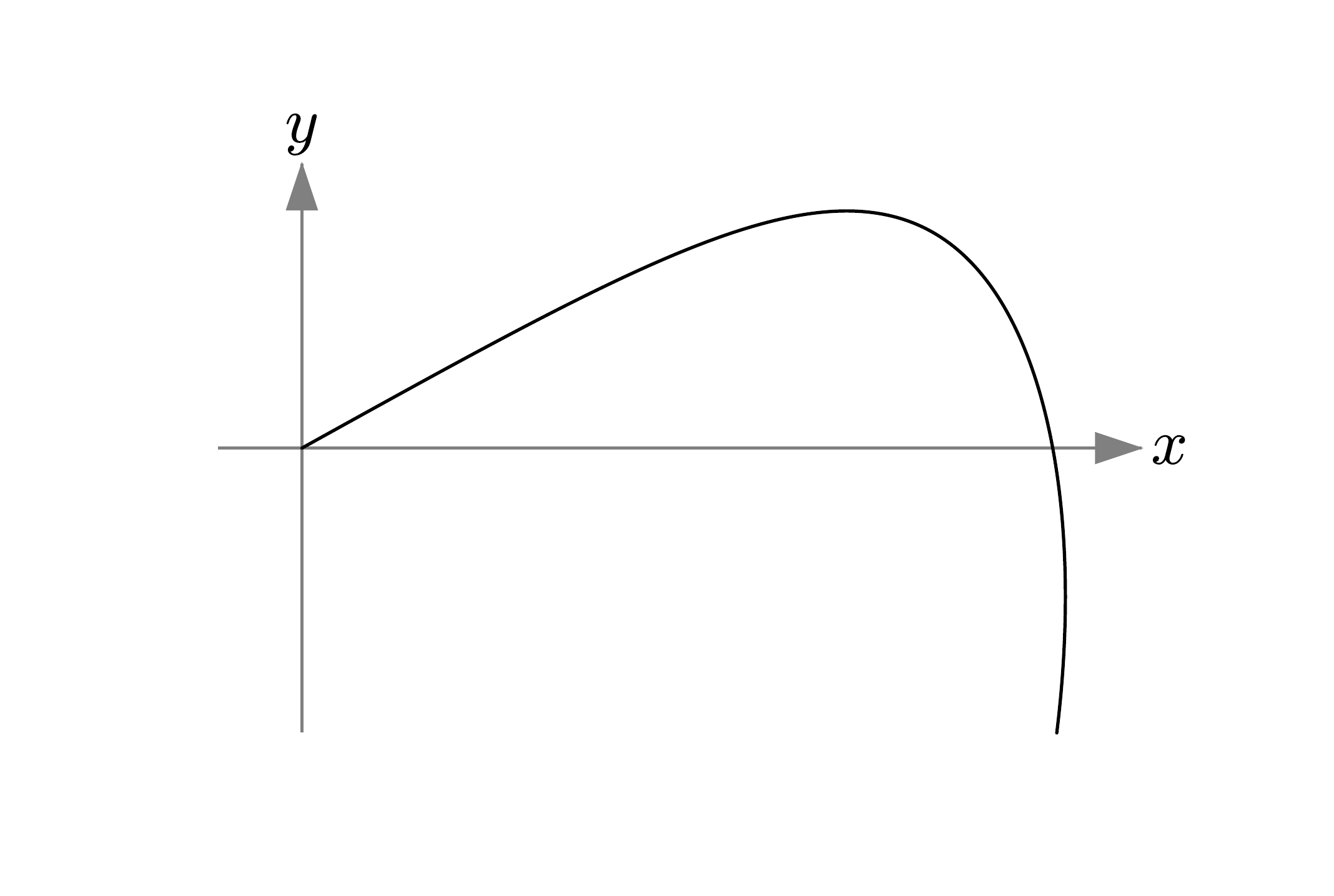}
			\caption{$T_u=3T_v$}%{$\frac{T_v}{T_u}=\frac{1}{3}$}
			\label{fig:pSO4}
		\end{subfigure}
		\caption{Stark's problem: some periodic orbits, with $\llS = 348600$ $\kilo\cubic\meter\per\squaren\second$, $f=9.12\times 10^{-9}$ $\kilo\meter\per\squaren\second$.}
		\label{fig:sPerOrb}
	\end{center}	
\end{figure}

\subsection{Other periodic orbits}
There exists another family of periodic orbits of brake type at the boundary of region $I$, i.e. for $\llS = -\mu$: they pass through the origin and have a constant value of the $u$-coordinate, equal to zero, in the $(u,v)$ plane, therefore they lie on the $y$ axis in the $(x,y)$ plane. Moreover, there are periodic orbits of brake type in correspondence of $\llS=\mu$ and $\enS<-2\sqrt{f\mu}$. They also pass through the origin and lie on the $y$ axis in the $(x,y)$ plane, but in the $(u,v)$ plane they have a constant value of the $v$-coordinate, equal to zero. If $\enS=-2\sqrt{f\mu}$, we obtain the two fixed points $(u,v)=(\pm \sqrt{|\enS|/f},0)$ for Stark's dynamics, corresponding to a single fixed point in the $(x,y)$ plane.\\ 
Additional periodic orbits can exist for $(\llS,\enS/\sqrt{f})$ belonging to region $IV$, as a consequence
of Remark~\ref{remark_rIV}. Their peculiarity is that the periods
$T_u$ and $T_v$ are commensurable, that is their quotient $T_v/T_u$ is
rational. In Figure~\ref{fig:sPerOrb} some examples are shown. Note
that the orbits in Figures~\ref{fig:pSO3} and \ref{fig:pSO4} are of
brake type. In this case, if $T_u$ is an odd integer multiple of
$T_v$, the trajectory passes through the origin as shown in
Figure~\ref{fig:pSO4}.
% =================================================================

\begin{figure}[h!]
	\begin{center}
		\includegraphics[width=\textwidth]{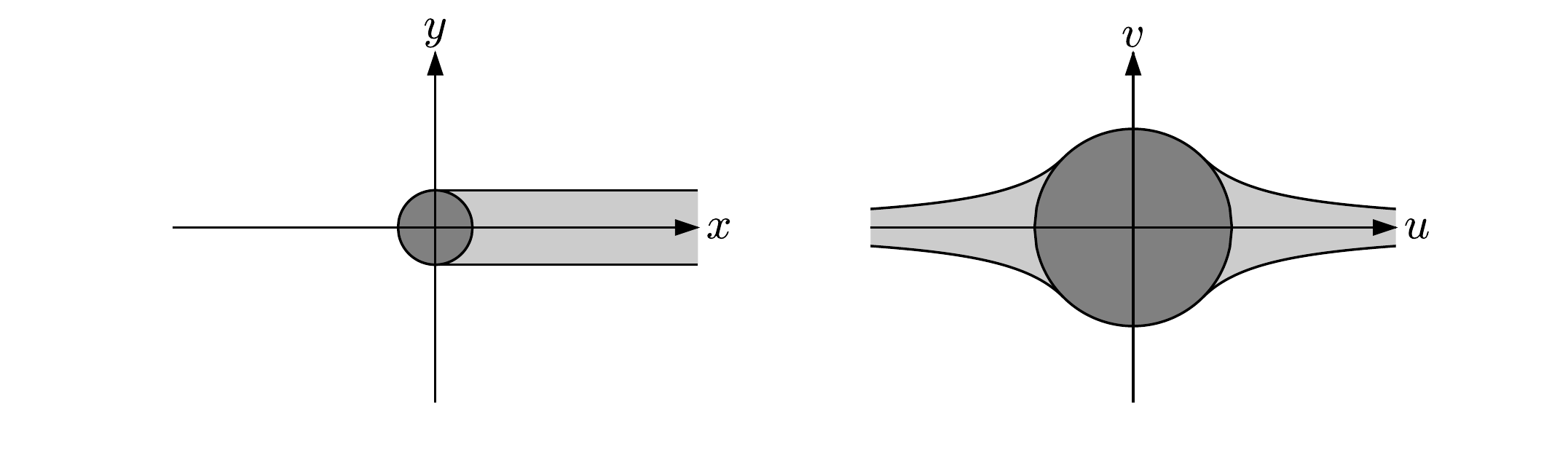}
		\caption{Sun-shadow regions in the $(x,y)$ plane
			(left), $(u,v)$ plane (right).}
		\label{fig:shadowUV}
	\end{center}
\end{figure}

% ===============================================
\section{The Sun-shadow dynamics}%: alternating Kepler's and Stark's regime}
\label{s:sunshadow}

The Sun-shadow problem arises by switching dynamics each time the
satellite passes through the boundary of the Earth shadow. The flow
develops by alternating Kepler's regime, corresponding to the shadow
region, and Stark's regime, in the out-of-shadow region. As shown in
Figure~\ref{fig:shadowUV}, the shadow region is defined as the set
$\{(x,y): \ x\geq0,\ -R\leq y \leq R\}$ in the $(x,y)$ plane. In the
$(u,v)$ plane this region has two components: it is the set $\{(u,v): \ -R/u\leq v \leq
R/u,\ |u|\geq\sqrt{R}\}$.

At the time $t=0$ of entrance into the shadow, let us consider initial conditions belonging to the set
\begin{equation}
\bigl\{ (p_u,p_v,u,v): \ u>\sqrt{R},\ uv=-R,\ p_vu + p_uv  >0 \bigr\}.
\label{inicond}
\end{equation}
The last condition is needed to have the velocity vector pointing
inside the shadow region.
%To find the exit point $(u,v)$
We search for
the solutions $(p_u,p_v,u,v)$ of the polynomial system
\begin{equation}
  \left\{
  \begin{split}
    &p_u^2 = 2\enK u^2 + 2(\mu + \llK)\cr
    &p_v^2 = 2\enK v^2 + 2 (\mu -\llK)\cr
    &2\am=p_vu - p_uv \cr
    &uv=R\cr
  \end{split}
  \right.
    \label{keplerpoly}
\end{equation}
with
\[
u>\sqrt{R}, \qquad v> 0,
\]
where $\am$ is the value of the angular momentum
$\mathscr{C}_k$. 
From system \eqref{keplerpoly}, it is possible to obtain the state at
the exit point of the shadow region. By eliminating the variables
$p_u,p_v,v$ we obtain an eight degree polynomial equation in $u$:
\begin{equation}
  \begin{split}
  &(\mu-\llK)^2 u^8 - 4(\mu-\llK)c_k^2 u^6 + 2\bigl(R^2(\llK^2 - \mu^2 - 4c_k^2\enK) + 2c_k^4\bigr)u^4 \cr
  &-4(\mu+\llK)R^2c_k^2 u^2+4(\mu+\llK)^2R^4=0.\cr
\end{split}
  \label{poly8}
\end{equation} 
The roots of \eqref{poly8} come in pairs $\pm u$, which give the same
values of $x$. %=\frac{1}{2}(u^2-\frac{R^2}{u^2}$.
We can select the right value of $x$ using the $y$-component of the Laplace-Lenz integral $\bm{A}_k$. Let us call $\Ubf_i= (p_{u_i},p_{v_i},u_i,v_i)^T$ the selected solution, corresponding to the state at the entrance point in Stark's regime. 
To find the exit point from this regime $\Ubf_o = (p_{u_o},p_{v_o},u_o,v_o)^T$,
we match the time intervals of $u$ and $v$ to go from $\Ubf_i$ to $\Ubf_o$, that can be computed from equations \eqref{pupvtau}.
%that is we set
%\[
%\sum_{j=1}^{n_u}\int_{a_j}^{b_j} \frac{du}{p_u} = 
%\sum_{l=1}^{n_v}\int_{c_l}^{d_l} \frac{dv}{p_v},
%\]
%where $n_u, n_v\in\mathbb{N}$, $a_1=u_i$, $b_{n_u}=u_o$, $c_1=v_i$, $d_{n_v}=v_%o$, and
%$(a_j,b_j)$, $(c_l,d_l)$, with $j=2,\ldots, n_u-1$, $l=2,\ldots, n_v-1$,
%are pairs of consecutive inversion points (where $p_u, p_v$ vanish).

In this regime, the angular momentum can change not only in value, but
even in sign. Indeed, the satellite can re-enter Kepler's regime
either in the first or third quadrant of the $(u,v)$ plane.

\begin{proposition}
	\label{HsLsprop}
	Each time the satellite crosses the boundary of the shadow
        region, we have a leap in energy from $\enS$ to $\enK$, or
        vice versa: the variation is equal to $\pm
        f(\bar{u}^2-R^2/\bar{u}^2)/2$, where $\bar{u}$ is the value
        taken by $u$ at the crossing point.  A similar leap occurs from
        $\llS$ to $\llK$, or vice versa. In this case, the variation
        is equal to $\pm fR^2/2$. When the satellite goes back to
        Stark's regime, the value of $\mathscr{L}_s$ is the same as
        before crossing the shadow; on the other hand, the energy
        usually changes unless the orbit is symmetric with respect to the
        $u$ axis.
\end{proposition}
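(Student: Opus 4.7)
The plan is to exploit the fact that $\mathscr{H}_s$ and $\mathscr{L}_s$ differ from $\mathscr{H}_k$ and $\mathscr{L}_k$ by simple polynomials in $u,v$, and that the state $\Ubf$ is continuous at the shadow boundary (only the governing force field switches). From \eqref{HKuv}, \eqref{HSuv}, \eqref{Lkuv} and \eqref{Lsuv} I would first read off the identities
\[
\mathscr{H}_s - \mathscr{H}_k = -\frac{f}{2}(u^2-v^2),\qquad \mathscr{L}_s - \mathscr{L}_k = -\frac{f}{2}u^2v^2.
\]
Evaluating at a shadow-boundary crossing, where $u^2v^2=R^2$ (so $v^2=R^2/\bar u^2$), these become $-\frac{f}{2}(\bar u^2-R^2/\bar u^2)$ and $-\frac{f}{2}R^2$, respectively. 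Whether one is entering or leaving the shadow selects the sign in front of each leap, giving the first two assertions.

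For the statement about $\mathscr{L}_s$, I would use that inside the shadow the dynamics is Keplerian, so that $\mathscr{H}_k$ and $\mathscr{L}_k$ are conserved along the arc. Denoting by $\bar u_i$ and $\bar u_o$ the values of $u$ at the entry into and the exit from the shadow, the previous identity gives
\[
\mathscr{L}_s^{\mathrm{after}}-\mathscr{L}_s^{\mathrm{before}}=\mathscr{L}_k^{\mathrm{exit}}-\mathscr{L}_k^{\mathrm{entry}}=0,
\]
which is precisely the claimed invariance of $\mathscr{L}_s$ across a full shadow passage.

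The same bookkeeping applied to the energy, using $\enK^{\mathrm{entry}}=\enK^{\mathrm{exit}}$ and a short factorisation, yields
\[
\enS^{\mathrm{after}}-\enS^{\mathrm{before}} = \frac{f}{2}(\bar u_i^2-\bar u_o^2)\Bigl(1+\frac{R^2}{\bar u_i^2\bar u_o^2}\Bigr).
\]
Since the bracket is strictly positive, the net energy leap vanishes exactly when $\bar u_o^2=\bar u_i^2$. The last step is to translate this algebraic condition into the geometric one stated in the proposition: combining $\bar u_o^2=\bar u_i^2$ with $u_iv_i=-R$ and $u_ov_o=R$ forces the exit point to be the reflection of the entry across the $u$-axis in the $(u,v)$ plane, equivalently the Kepler arc inside the shadow is symmetric with respect to the $u$-axis. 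The argument is essentially a chain of algebraic identities; the only mildly delicate points are the sign-tracking according to the direction of crossing and the short geometric translation at the end.
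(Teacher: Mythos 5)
Your proposal is correct and follows essentially the same route as the paper: the leaps are read off from the explicit differences $\mathscr{H}_s-\mathscr{H}_k=-\tfrac{f}{2}(u^2-v^2)$ and $\mathscr{L}_s-\mathscr{L}_k=-\tfrac{f}{2}u^2v^2$ evaluated where $u^2v^2=R^2$, and conservation of the Kepler integrals along the shadow arc yields the invariance of $\mathscr{L}_s$ and the factorised energy jump $\tfrac{f}{2}\bigl(1+R^2/(\bar u_i^2\bar u_o^2)\bigr)(\bar u_i^2-\bar u_o^2)$, exactly as in the paper's proof. The only minor imprecision is your final ``equivalently'': $\bar u_o=\bar u_i$ makes the exit point the mirror image of the entry point but does not by itself force the whole Kepler arc to be symmetric about the $u$ axis — the paper, like you, really only establishes the algebraic condition $\bar u_o=\bar u_i$, with the symmetry phrasing serving as the sufficient-condition gloss.
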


\begin{proof}
  Assume that the body enters Stark's regime at the
  point $(\puiP, \pviP, \uiP, \viP)$. Let $\hsiP$ be
  the value of the energy and $\lsiP$ the value of the Laplace-Lenz
  integral. %Let us suppose that the body returns to the shadow region through the surface $uv=-R$.
  When the body returns to the shadow region, 
  %we see, comparing the expressions of $\mathscr{H}_s$ and $\mathscr{H}_k$ (see \eqref{HKuv}, \eqref{HSuv}), and of $\mathscr{L}_s$ and $\mathscr{L}_k$ (see \eqref{Lkuv}, \eqref{Lsuv}),that 
  the integrals vary in the following way: 
  \begin{equation*}
  	\hkoP = \hsiP + \frac{f}{2}\biggl(\uoP^2-\frac{R^2}{\uoP^2}\biggr),
  \end{equation*}
  \begin{equation*}
    \lkoP = \lsiP + \frac{f}{2}R^2,
  \end{equation*}
  with $\uoP$ the $u$ coordinate of the point on the shadow boundary
  where the satellite exits from Stark's regime. When the body enters
  Stark's regime again, by passing through the point
  $(\puiS,\puiS,\uiS,\viS)$, similar variations occur:
  \begin{equation*}
    \hsiS = \hkoP - \frac{f}{2}\biggl(\uiS^2-\frac{R^2}{\uiS^2}\biggr),
  \end{equation*}
  \begin{equation*}
    \lsiS = \lkoP - \frac{f}{2}R^2.
  \end{equation*}
  Thus, we get
  \begin{equation*}
    \hsiS = \hsiP + \frac{f}{2}\biggl(1+\frac{R^2}{\uoP^2 \uiS^2}\biggr)\bigl(\uoP^2-\uiS^2\bigr),
  \end{equation*}
  \begin{equation*}
    \lsiS = \lsiP,
  \end{equation*}
  where $	\hsiS = \hsiP$ only if $\uoP=\uiS$.	
\end{proof}

% ------------------------------------------------------
\subsection{Periodic orbits of brake type}
\label{proofPBOexists}
We prove the existence of a family of periodic orbits of brake type,
$\widehat{\bx}=\widehat{\bx}(t;\llS)$, parametrised by $\llS$, which
are close to the brake periodic orbits $\bx^*=\bx^*(t;\llS)$ of
Stark's problem, described in Section~\ref{s:brakestark}. For this
purpose, we consider values $\llS$ in $(-\mu,
\mu)$, for which the periodic orbits $\bx^*$ exist. In the following
we shall restrict the interval $(-\mu,\mu)$ for reasons related to the
proof.

The idea of the proof is to search for an initial point $(x,y) =
(x_0,0)$ with $x_0>R$, i.e. in Kepler's regime, allowing to arrive at
a zero velocity point $(x_B,y_B)$ in Stark's regime after passing
through an exit point $(x_E,R)$ from the shadow region. We look for an
orbit that is symmetric with respect to the $x$ axis, like
$\bx^*$. Because of the symmetry, $\widehat{\bx}$ oscillates between
the points $(x_B,y_B)$ and $(x_B,-y_B)$.
We search for an initial value $x_0$ fulfilling
%the relation
\begin{equation}
x_0 > \xistar/2,
\label{x0rel}
\end{equation}
where $(\xistar/2,0)$ belongs to $\bx^*$, see
Figure~\ref{fig:proofStrategyXY}.  The idea behind this choice is
that, passing through the shadow, the pushing effect of the solar
radiation pressure is lacking. Moreover, we require that $x_0$ is such
that at the exit point $(x_E,R)$ the energy fulfils
\begin{equation}
\enS< 	\ensstar,
\label{xErel}
\end{equation}
with $\ensstar$ given in \eqref{hs_star}. If $\enS > \enS^*$ we
cannot have zero velocity points, see Remark~\ref{r:zerovel}.

For the proof, we use the variables $u,v$. The initial point $(x_0,0)$
corresponds to two possible points $(\pm \sqrt{2x_0},0)$ in the
$(u,v)$ plane. Similarly, the point $(\xistar/2,0)$ corresponds to
$(\pm \sqrt{\xistar},0)$. By symmetry, we can focus only on the
$\{u>0\}$ half-plane of the $(u,v)$ plane, see
Figure~\ref{fig:proofStrategyUV}.

\begin{figure} [h!]
  \begin{subfigure}{0.45\textwidth}
    \centering
    \includegraphics[width=1.1\textwidth]{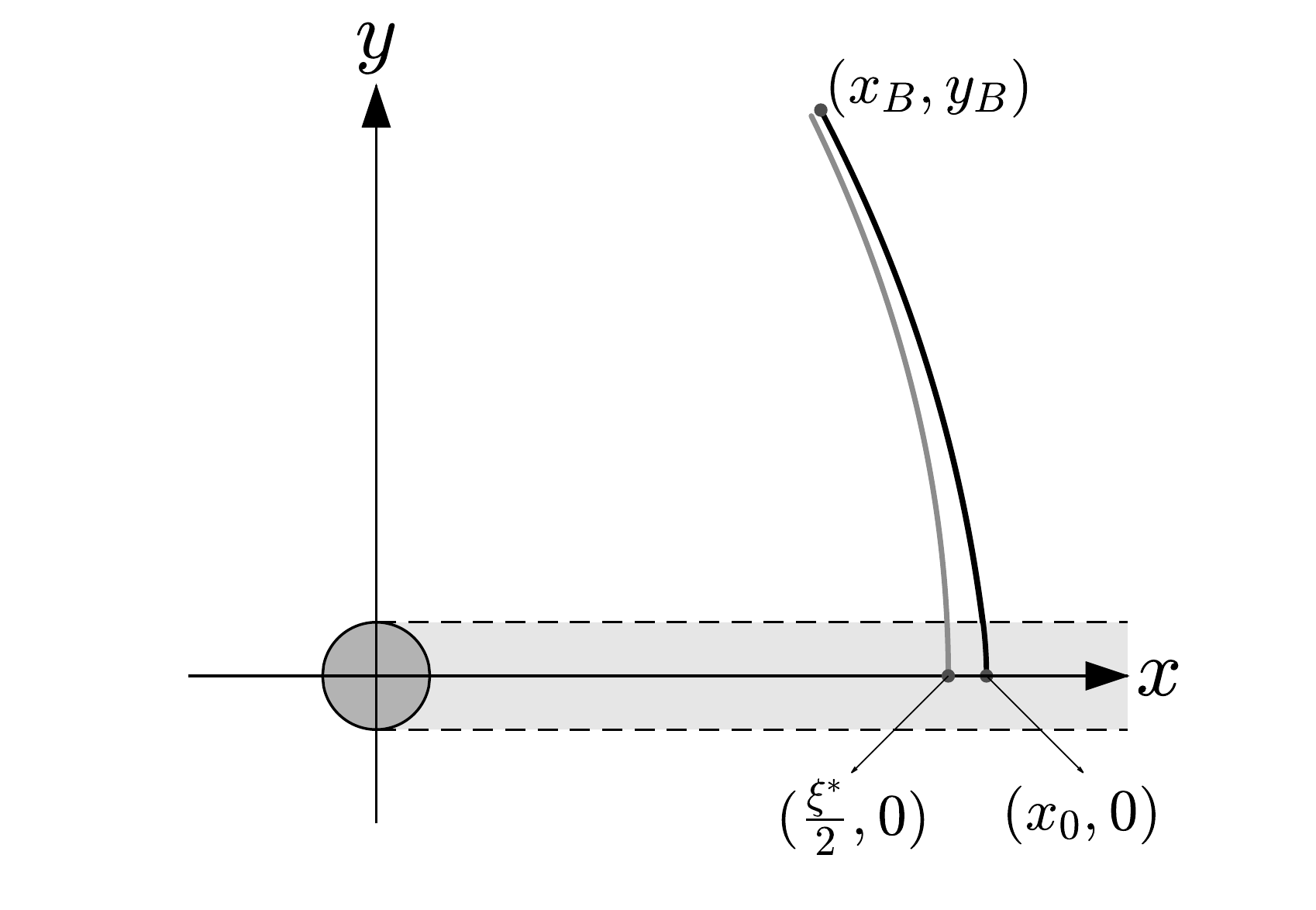}
    \subcaption{$(x,y)$ plane}
    \label{fig:proofStrategyXY}
  \end{subfigure}
  \begin{subfigure}{0.45\textwidth}
    \centering
    \includegraphics[width=1.1\textwidth]{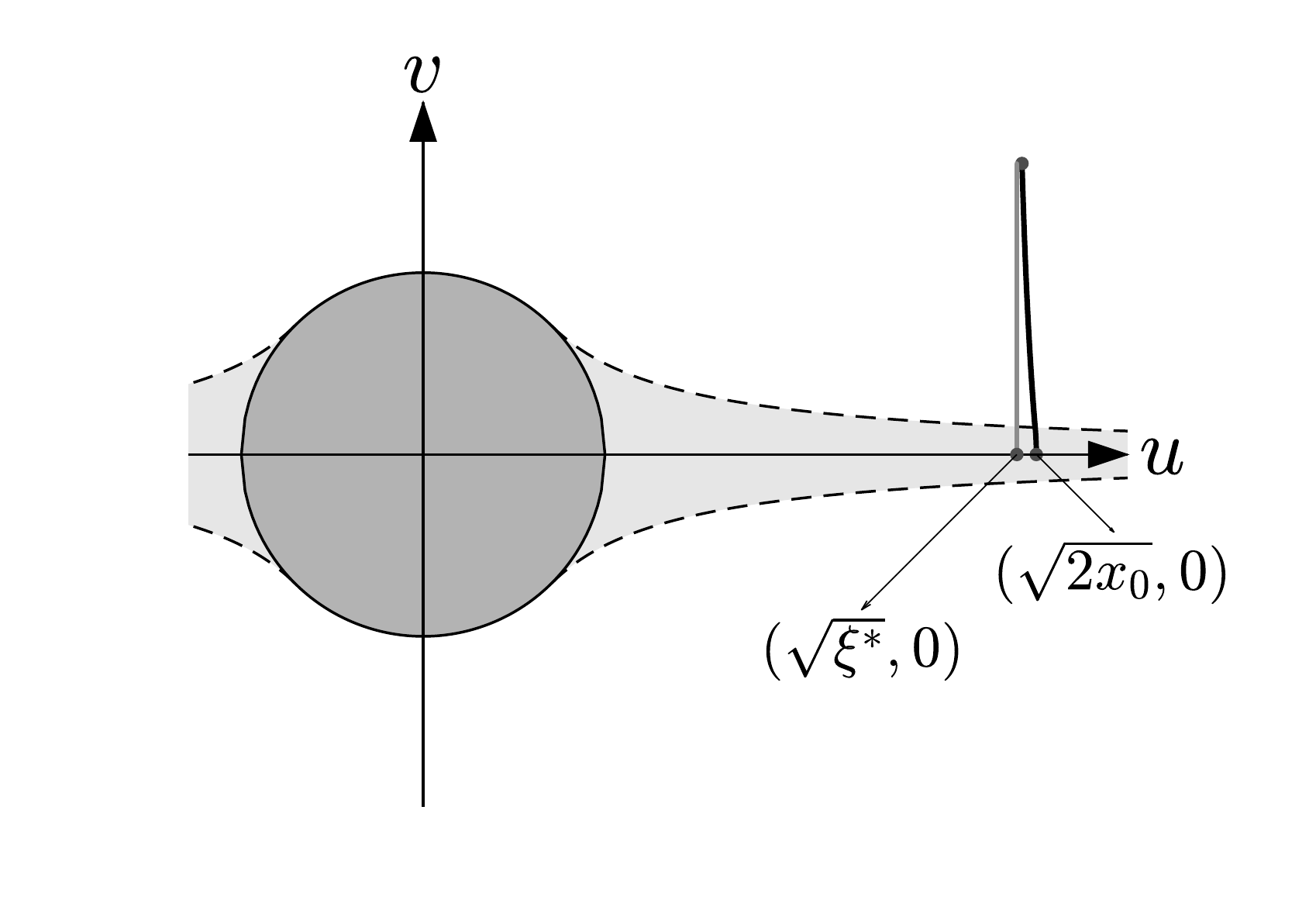}
    \subcaption{$(u,v)$ plane}
    \label{fig:proofStrategyUV}
  \end{subfigure}
  \caption{A portion of the brake orbit ${\bm{\widehat{x}}}(t;\llS)$
    between the horizontal axis and a zero velocity point is
    represented by the black curve in the $(x,y)$ and $(u,v)$
    coordinates. The analogous portion of the brake orbit
    ${\bm{{x}}}^*(t;\llS)$ of Stark's problem is drawn in grey.}
%    ${\bm{\widehat{x}}}(t;\llS)$ between the horizontal axis
%    point  $(x_0,0)$ and the zero velocity point $(x_B,y_B)$ is
%    represented as a black curve in the $(x,y)$ and $(u,v)$ planes; in
%    the $(u,v)$ plane, the point $(x_0,0)$ corresponds to the points
    %    $(\pm \sqrt{2x_0},0)$.
%    The portion of ${\bm{{x}}}^*(t;\llS)$
%    between the point $(\xistar/2,0)$ ($(\pm \sqrt{\xistar},0)$ in the
%    $(u,v)$ plane) and a zero velocity point is represented as a grey
%    line. }
\end{figure}

\begin{proposition}
If $x_0$ is selected so that relations \eqref{x0rel}, \eqref{xErel}
hold, then the exit point $(x_E,R)$ from the shadow region, with 
\begin{equation}
	x_E
	= \frac{\xiE}{2}-\frac{R^2}{2\xiE},
	\label{xE}
\end{equation}
belongs to the unbounded component of the possible configurations
corresponding to region IV of Stark's regime.
\label{p:regionIV}
\end{proposition}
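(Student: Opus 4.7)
The plan is to translate the hypotheses \eqref{x0rel} and \eqref{xErel} into the single inequality $\xiE > \xistar$, from which the desired conclusion follows automatically: in region IV the roots of $U$ satisfy $\xi_2 < \xistar < \xi_1$, and the admissibility condition $p_u^2 = U(u_E) \geq 0$ forces $\xiE \in [0,\xi_2]\cup[\xi_1,+\infty)$, so $\xiE > \xistar > \xi_2$ immediately selects the unbounded component.

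First I would rewrite the relevant integrals in the separable coordinates. At the initial point $(u_0,v_0,p_{u_0},p_{v_0})=(\sqrt{2x_0},0,0,\sqrt{2x_0}\,p_y)$, the formula \eqref{Lkuv} for $\mathscr{L}_k$ combined with the expression \eqref{HKuv} for $\mathscr{H}_k$ produces the clean identity $\mu+\llK = -2x_0 \enK$. Proposition~\ref{HsLsprop}, specialised to the crossing $uv=R$, then supplies the jumps $\llS = \llK - fR^2/2$ and $\enS = \enK - f x_E$. Combining these with the relation $\mu+\llS = f\xistar^2/2$ (which follows from \eqref{hs_star} and \eqref{xistar2}) yields
\[
-\enK = \frac{f(\xistar^2+R^2)}{4x_0}, \qquad -\enS = \frac{f(\xistar^2+R^2)}{4x_0} + f x_E.
\]

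Next I would apply the two hypotheses. Hypothesis \eqref{xErel}, i.e.\ $-\enS > f\xistar$, becomes $x_E > \xistar - (\xistar^2+R^2)/(4x_0)$. Hypothesis \eqref{x0rel}, in the form $2x_0 > \xistar$, makes the right-hand side strictly exceed $\xistar - (\xistar^2+R^2)/(2\xistar) = \xistar/2 - R^2/(2\xistar)$, so $x_E > \xistar/2 - R^2/(2\xistar)$. Since the map $\xi \mapsto \xi/2 - R^2/(2\xi)$ is strictly increasing on $(0,+\infty)$ (its derivative equals $1/2 + R^2/(2\xi^2) > 0$), the representation \eqref{xE}, $x_E = \xiE/2 - R^2/(2\xiE)$, forces $\xiE > \xistar$, which is exactly the inequality required.

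The main technical content is recognising this chain: after eliminating $\enK$ and $\llK$ between the initial condition and the boundary jumps, the inequality $-\enS > f\xistar$, once \eqref{x0rel} is used to replace $4x_0$ with its lower bound $2\xistar$, collapses into exactly the value taken by the map $\xi\mapsto \xi/2 - R^2/(2\xi)$ at the threshold $\xistar$, and strict monotonicity finishes the job without any further estimates. The ancillary fact that $(\llS,\enS/\sqrt{f})$ actually lies in region IV, rather than in region I or on a boundary, is built into the running assumption $\llS \in (-\mu,\mu)$ inherited from the Stark brake family $\bx^*(t;\llS)$ together with \eqref{xErel}.
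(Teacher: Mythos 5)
Your proof is correct, and it takes a genuinely different, more direct route than the paper's. You eliminate $\enK,\llK$ through the boundary jumps of Proposition~\ref{HsLsprop} together with the identity $\mu+\llS=f{\xistar}^2/2$, so that hypothesis \eqref{xErel} becomes $x_E>\xistar-({\xistar}^2+R^2)/(4x_0)$, hypothesis \eqref{x0rel} pushes the right-hand side above $\xistar/2-R^2/(2\xistar)$, and strict monotonicity of $\xi\mapsto\xi/2-R^2/(2\xi)$ on $(0,+\infty)$ gives $\xiE>\xistar$; the closing dichotomy ($p_{u}^2=U(u)\geq 0$ at the exit point forces $\xiE\leq\xi_2$ or $\xiE\geq\xi_1$, while $\xi_2<\xistar<\xi_1$ in region IV) is the same final step as in the paper. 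The paper instead derives the explicit exit solution $\xiE=x_0+x_T$ from system \eqref{exitpoint}, proves Lemma~\ref{lemmaEnergy} (monotonicity of $\hsfun$ and $\hsfun(\xistar/2)>\ensstar$), brackets the threshold $x_0^*$ solving $\hsfun(x_0)=\ensstar$ between the roots $x_0^\pm$ of the auxiliary polynomials $g_\pm$, and deduces $\xiE>\xistar$ from \eqref{xistarCond}. What that longer route buys is exactly the material your argument does not provide: the reality of $\xiE$ for all $x_0\geq\xistar/2$ (whence the restriction $\llS\in[\llS^-,\llS^+]$), the fact that hypothesis \eqref{xErel} is actually satisfiable (existence of $x_0^*$, via the monotonicity of $\hsfun$), and the quantitative bounds $\xistar+C_1<\xiEstar<\xistar+C_2$ of Remark~\ref{remarkp:regionIV}, which are reused later in Lemmas~\ref{lemmaTauTav} and \ref{lemmaEnergyBar}. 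Your version establishes the conditional statement as written with far less machinery; two points worth making explicit are that $\xiE\geq R>0$ (so the monotonicity argument applies) and that you obtain $\xiE\geq\xi_1$ rather than the strict inequality recorded in Remark~\ref{remarkp:regionIV} --- enough for membership in the unbounded component, but not for the later strict estimates.
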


\begin{proof}

Because of the symmetry of the orbit with respect to the
$x$ axis, in Kepler's regime the initial state has the form
\[
(p_u,p_v,u,v)=(0,p_{v_0},\sqrt{2x_0},0).
\]
By system \eqref{keplerpoly} we have
\begin{equation}
  p_{v_0} = \sqrt{2(\mu-\llK)}
  \label{pv0}
\end{equation}
and
\[
\enK = -\frac{\mu+\llK}{2x_0},
\]
where
\begin{equation}
  \llK = \llS + \frac{f}{2}R^2.
  \label{llsllk}
\end{equation}
Note from relations \eqref{pv0}, \eqref{llsllk} that we need to
restrict the interval of $\llS$ to $(-\mu,\mu-\frac{f}{2}R^2)$.

The exit point from the shadow is obtained by solving
\begin{equation}
\left\{
\begin{split}
	&p_u^2 = 2\enK u^2 + 2(\mu + \llK)\cr
	&p_v^2 = 2\enK v^2 + 2(\mu - \llK)\cr
	&uv=R\cr
	&p_u p_v=2uv\enK, \cr
\end{split}
\right.
\label{exitpoint}
\end{equation}
where $p_v>0$.

\noindent With the last equation we set to zero the $y$-component of
the Laplace-Lenz vector, which is a necessary condition for the
symmetry with respect to the $x$ axis. There are four possible real
solutions of \eqref{exitpoint}, and two of them have positive values
of $u$. Between these two, only one corresponds to exiting from the
shadow, i.e. fulfils
\[
p_y = \frac{up_v+vp_u}{u^2+v^2} > 0.
\]
%This solution gives 
This solution gives the $u$ coordinate at the exit point, whose square is
\begin{equation}
	\xiE = x_0+x_T,
	\label{xiEDef}
\end{equation}
with
\begin{equation}
  x_T = x_T(x_0) = \sqrt{x_0^2 -a_k R^2}, \qquad a_k = \frac{\mu+\llK}{\mu-\llK}.
%  \biggl(x_0^2 -\frac{\mu+\llK}{\mu-\llK}R^2\biggr)^{1/2}.
\label{xT}
\end{equation}
From $\xiE$, we obtain $x_E$ through relation \eqref{xE}, applying the coordinates change \eqref{uv}.
Using relation \eqref{x0rel}, we can write
\[
x_0 = \xistar/2 + \Delta x .
\]
We have real values of $\xiE$ if and only if
\[
\Delta x \in \Bigl(-\infty, - \frac{\xistar}{2} - \sqrt{a_k}R \Bigr]
                \cup \Bigl[- \frac{\xistar}{2} + \sqrt{a_k}R,
                  +\infty\Bigr).
\]
The last condition is fulfilled for each choice of $\Delta x\geq 0$ if
\begin{equation}
-\frac{\xistar}{2}+\sqrt{a_k}R\leq 0,
\label{xistarCond}
\end{equation}
which holds if we further restrict the interval of $\llS$ to
$[{\llS}^-,{\llS}^+]$, with
\[
  {\llS}^\pm = -\frac{5}{4}fR^2 \pm \sqrt{\mu^2+\frac{9}{16}f^2R^4-\frac{5}{2}fR^2\mu}.
%&{\llS}^+ = -\frac{5}{4}fR^2+\sqrt{\mu^2+\frac{9}{16}f^2R^4-\frac{5}{2}fR^2\mu}.\cr
\]

\begin{remark}
  Since $f\ll 1$, the new range $[{\llS}^-,{\llS}^+]$ is slightly
  smaller than $(-\mu,\mu)$. 
\end{remark}

The energy in Stark's regime is
\begin{equation}
\hsfun(x_0) = -\frac{\mu+\llK}{2x_0} -\frac{f}{2}\left(x_0+x_T\right) + \frac{fR^2}{2(x_0+x_T)}.
\label{hsx0}
\end{equation}
For the proof we need this result:

\begin{lemma}
  The energy $\hsfun$
  %in Stark's regime, given by relation \eqref{hsx0},
	%    \[
	%    h_s = -\frac{\mu+\llK}{2x_0} -\frac{f}{2}\left(x_0+x_T\right) + \frac{f}{2}\frac{R^2}{x_0+x_T}
	%    \]
	is a decreasing function of $x_0$ in the interval $[\frac{\xistar}{2},\infty)$. Moreover, we have
	
	\[
	\hsfun(\xistar/2)>\ensstar.
	\]
	\label{lemmaEnergy}
\end{lemma}
\begin{proof}
	
	%The derivative of $x_T$ with respect to $x_0$ is
	%\[
	%\frac{dx_T}{dx_0}=\frac{x_0}{x_T}.
	%\]
	%Therefore, 
  From equation \eqref{hsx0}, the derivative of $\hsfun$ with respect to $x_0$ is	
  \[
  \frac{d\hsfun}{dx_0}= \frac{(\mu+\llK)(x_0+x_T)x_T-f(x_0+x_T)^2x_0^2-fR^2x^2_0}{2(x_0+x_T)x^2_0x_T}.
  \]
  The denominator is always positive, being $x_0,x_T>0$. We prove that
  the numerator is negative. Because of relations \eqref{hs_star}, \eqref{xistar2} and
  \eqref{llsllk}, this corresponds to showing that
  \[
  (x_0+x_T)\biggl(\frac{{\xistar}^2}{2}x_T - (x_0+x_T)x_0^2\biggr)
  +R^2\biggl(\frac{1}{2}(x_0+x_T)x_T - x_0^2\biggr) < 0.
  \]	
  This follows from \eqref{x0rel} and  $x_0>x_T$.
%	\[
%	\frac{{\xistar}^2}{2}x_T<(x_T+x_0)x_0^2,\qquad \frac{x_T+x_0}{2}<x_0,
%	\]
  We conclude that $\frac{d\hsfun}{dx_0}<0$. 

  \smallbreak
  Next we prove the second statement of the lemma. We have
  \[
  \hsfun\left(\frac{\xistar}{2}\right)-\ensstar =
  - \frac{\mu+\llK}{\xistar} - \frac{f}{4}\Bigl(\xistar
  + \sqrt{\xistar^2 - 4 a_k R^2}\,\Bigr)
  + \frac{fR^2}{\xistar + \sqrt{{\xistar}^2 - 4 a_k R^2}}
  - \ensstar.
  \]
  Using \eqref{hs_star}, \eqref{xistar2}, \eqref{llsllk} we obtain
  \[
-\frac{\mu+\ell_k}{\xi^*} = \frac{h_s^*}{2} + \frac{f^2R^2}{2 h_s^*}
  \]
  and we get
  \[
  \hsfun\left(\frac{\xistar}{2}\right)-\ensstar =
  \Bigl(\ensstar + \sqrt{{\ensstar}^2-4a_k f^2 R^2}\,\Bigr)
  \Biggl(
  -\frac{1}{4}  +
  \frac{f^2R^2} {2\ensstar
    \Bigl(-\ensstar + \sqrt{{\ensstar}^2-4
      a_k f^2R^2}\,\Bigr)}\Biggr). 
  \]
  From relation
  \[
  \ensstar + \sqrt{{\ensstar}^2-4 a_k f^2R^2} <0,
  \]
  we conclude that $\hsfun(\xistar/2)-\ensstar>0$.
	%   We notice that
	%   \[
	%   \begin{split}
	%   & -\frac{1}{4}h_s^* -\sqrt{\frac{{hs^*}^2}{16}-\frac{f^2}{4}\frac{\mu+\llK}{\mu-\llK}R^2} \geq 0\cr
	%   & 2h_s^* -hs^*+\sqrt{{hs^*}^2-4f^2\frac{\mu+\llK}{\mu-\llK}R^2} <0 
	%   \end{split}   
	%   \]
	%      Therefore $h_s-h_s^*>0$.
	
\end{proof}

%We can prove that $\hsfun$ is a decreasing function of $x_0$ in the interval $[\xistar/2,\infty)$, and is
%larger than $\ensstar$ if $x_0=\xistar/2$, see %Lemma~\ref{l:hs} in the
%Appendix.

Using Lemma~\ref{lemmaEnergy}, we only need to find $x_0^* >
\xistar/2$ such that $\hsfun(x_0^*)=\ensstar$ to prove that at the
exit time the values of the integrals $(\llS,\hsfun(x_0))$, with
$x_0>\xistar/2$, belong to region IV. From \eqref{hs_star}, \eqref{xistar2},
\eqref{llsllk}, \eqref{xT} and \eqref{hsx0}, solving equation $\hsfun(x_0)=\ensstar$ corresponds to
searching for the roots of
\[
g(x_0) = -4x_0^3+ 4\xistar x_0^2 +
\bigl(R^2-\xistar^2+2 a_k R^2\bigr)x_0-
\bigl((2x_0-\xistar)^2+R^2\bigr)x_T.
\]
From relations \eqref{x0rel} and \eqref{xT}, it holds
\begin{equation}
Cx_0<x_T<x_0, \qquad C = \sqrt{1 - 4 a_k\frac{R^2}{\xistar^2}}.
\end{equation}
Thus, we have 
\[
g_-(x_0) \leq g(x_0) \leq g_+(x_0),
\]
where
\[
\begin{split}
  &g_-(x_0)=-2x_0\bigl( 4x_0^2-4\xistar x_0 + \xistar^2 - a_k R^2
  \bigr),\cr &g_+(x_0)=-(1+C)x_0\left( 4x_0^2-4\xistar x_0 +
  \xistar^2 - \frac{1-C}{1+C}R^2-\frac{2}{1+C} a_k R^2\right).\cr
\end{split}
\]
The polynomials $g_+$ and $g_-$ have three roots, but only one is
larger than $\xistar/2$. Denoting the latter with $x_0^+$ and $x_0^-$
respectively, we get
\[
x_0^- = \frac{\xistar}{2} + C_1,\qquad
x_0^+ = \frac{\xistar + C_2}{2}, 
\]
with
\[
  C_1 = \frac{R}{2}\sqrt{ a_k},\qquad C_2 =
  R\,\sqrt{\frac{1-C+2 a_k}{1+C}}.
\]
This shows the existence of $x_0^*>\xi^*/2$, solution of
$\hsfun(x_0)=\ensstar$.

\smallbreak
Next we show that the exit point belongs to the unbounded component of
the configuration set, i.e. that $\xi_E > \xi_1$ holds for each $x_0>x_0^*$.  Since $x_0^-<x_0^*<x_0^+$ and $\xiE$, given in
\eqref{xiEDef}, is an increasing function of $x_0$, we have 
\begin{equation}
	\xiE(x_0^-)<\xiE(x_0^*)<\xiE(x_0^+)
	\label{xiEestimate}
\end{equation}
where
\[
\begin{split}
&\xiE(x_0^-) = \frac{\xistar}{2}+C_1+\sqrt{\left(\frac{\xistar}{2}+C_1\right)^2- a_k R^2},\cr
&\xiE(x_0^+) = \frac{\xistar+C_2}{2}+\sqrt{\frac{\left(\xistar+C_2\right)^2}{4}- a_k R^2}.\cr
\end{split}
\]
Relation \eqref{xistarCond} implies $\xiE(x_0^-)>\xistar$. Thus
$\xiE(x_0^*)>\xistar$.  Since in region IV we have $\xi_2< \xistar
< \xi_1$, and $\xiE< \xi_2$ or $\xiE> \xi_1$, the latter
relation holds.

\end{proof}

\begin{remark}
Proposition~\ref{p:regionIV} yields
\[
2x_0>\xiE > \xi_1 > \xistar > \xi_2 > 0, %\Longrightarrow \Delta \xi \geq 0,
\]
and
	\[
\xistar + C_1 <
\xiEstar < \xistar + C_2,
\]
where $\xiEstar = \xiE(x_0^*)$.
\label{remarkp:regionIV}
\end{remark}

To search for a zero velocity point $(x_B,y_B)$ we use the coordinates $u,v$ and the fictitious time $\tau$. The maps
$u(\tau)$, $v(\tau)$ become stationary at $\tau=\tau_u, \tau_v$
respectively, where
\begin{align*}
\tau_u &= \int_{u_1}^{\sqrt{\xiE}} \frac{du}{\sqrt{fu^4 + 2\enS u^2  + 2(\mu+\llS)}},
\\
\tau_v &= \int_{\frac{R}{\sqrt{\xiE}}}^{v_1} \frac{dv}{\sqrt{- fv^4 + 2\enS v^2 + 2(\mu-\llS)}}.
\end{align*}
%To reach a zero velocity point
We search for value of $x_0$ such that
\begin{equation}
\tau_u = \tau_v,
\label{sametau}
\end{equation}
which corresponds to reach a zero velocity point.

\begin{figure}
  \centering
  \begin{subfigure}{0.32\textwidth}
    \includegraphics[width=1.04\textwidth]{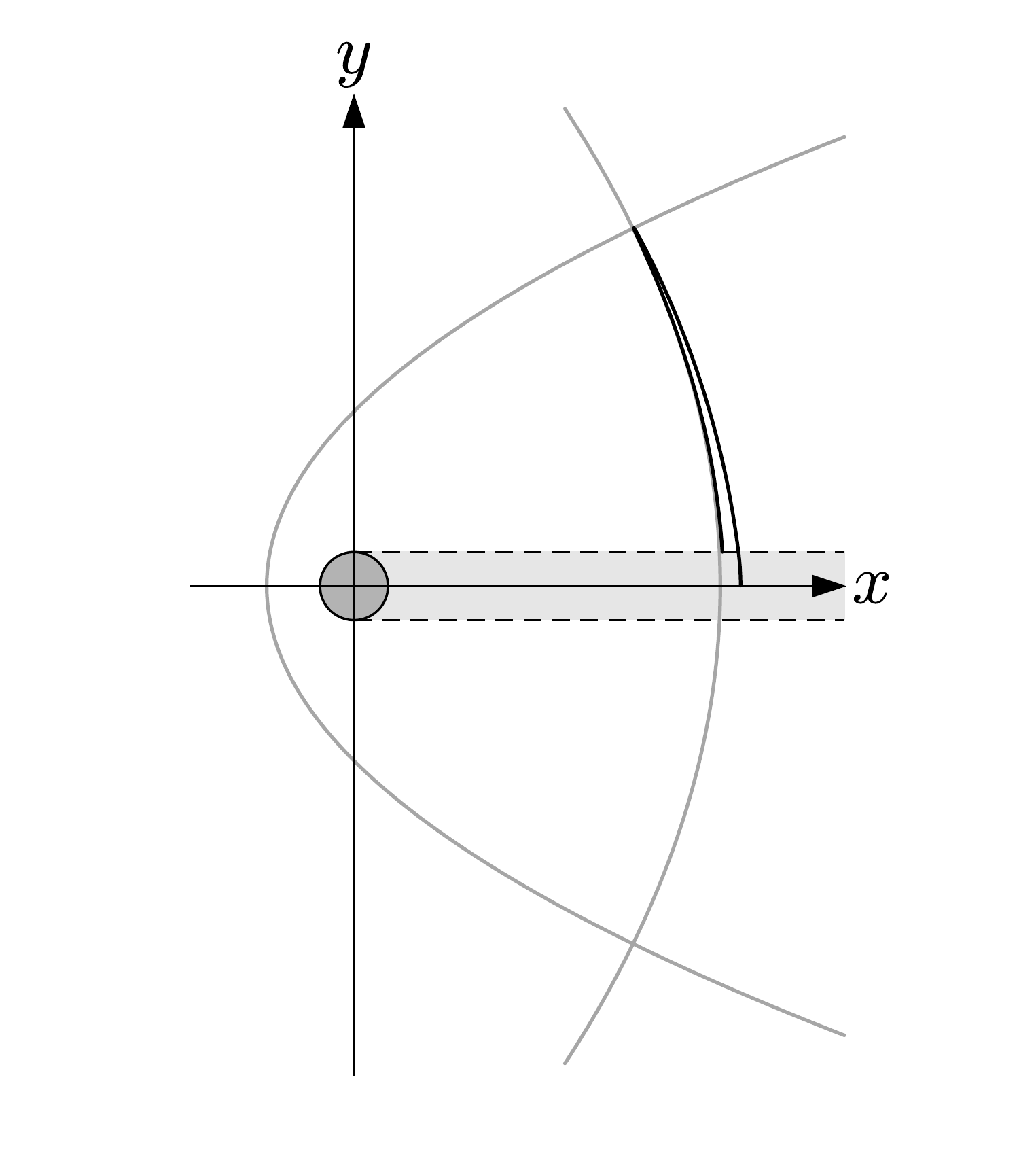}
    \subcaption{$\tau_u<\tau_v$}
  \end{subfigure}
  \begin{subfigure}{0.32\textwidth}
    \includegraphics[width=1.04\textwidth]{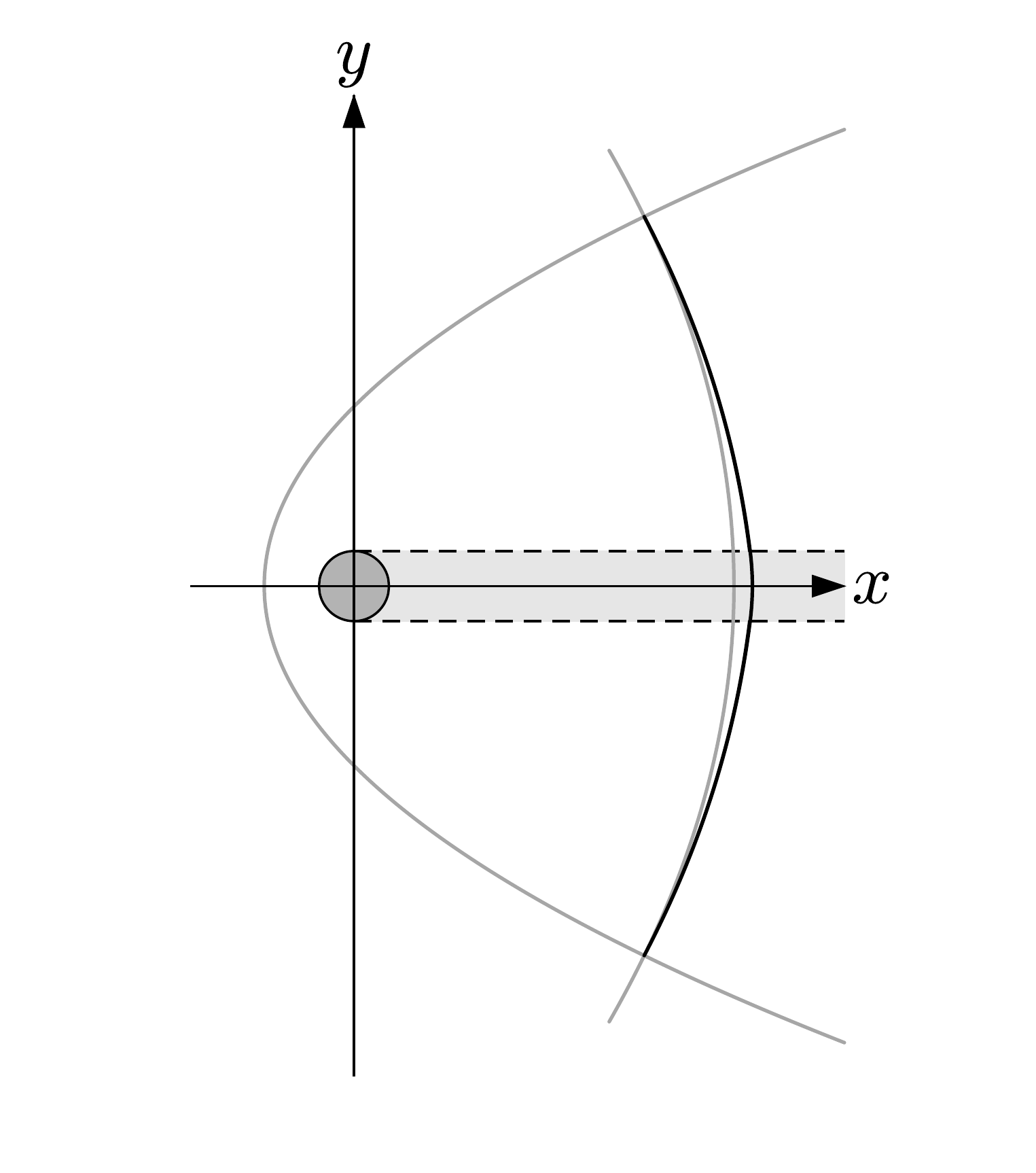}
    \subcaption{$\tau_u=\tau_v$}
  \end{subfigure}
  \begin{subfigure}{0.32\textwidth}
    \includegraphics[width=1.04\textwidth]{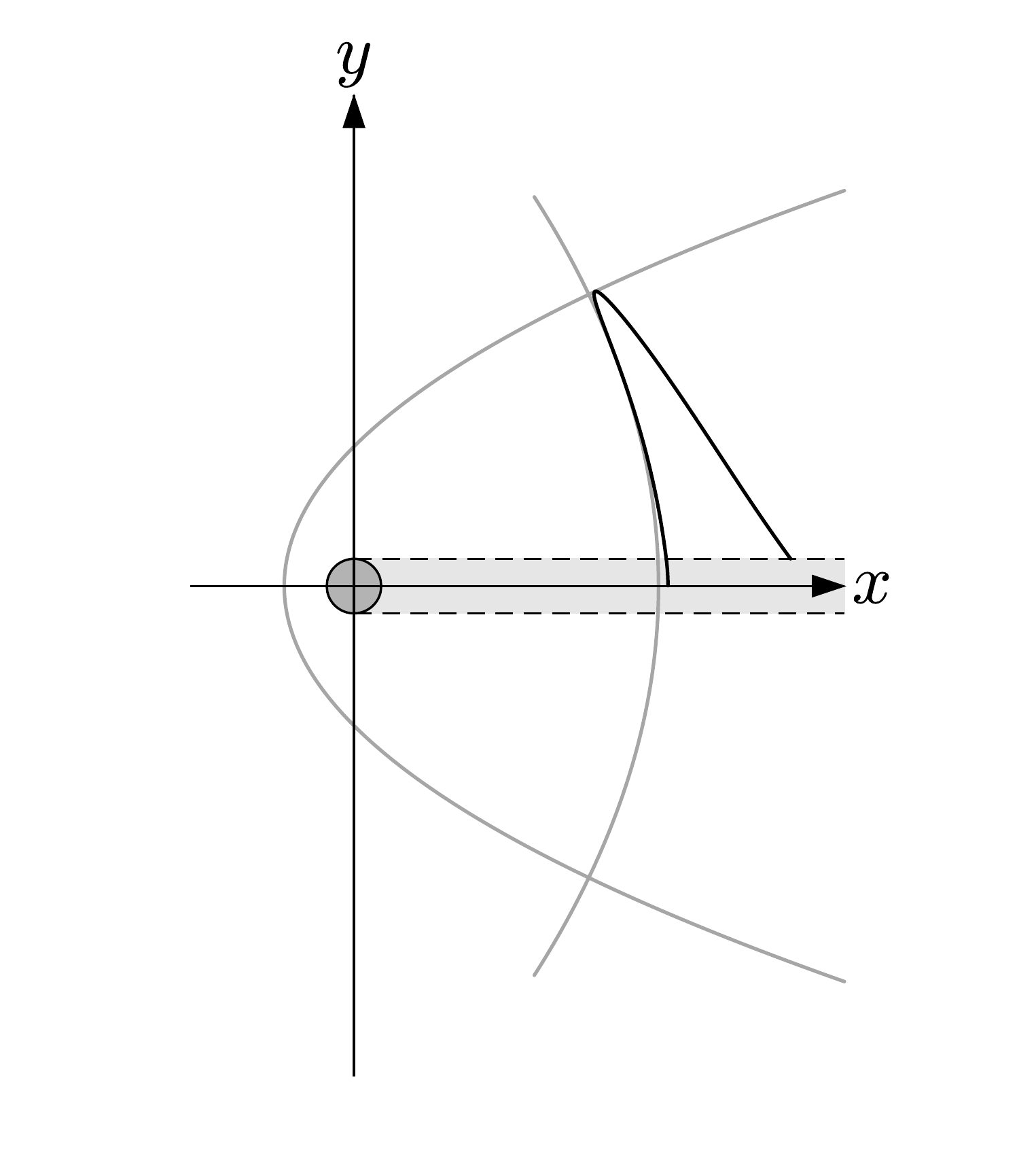}
    \subcaption{$\tau_u>\tau_v$}
  \end{subfigure}
  \caption{Brake orbit (a) and two close trajectories (b) and (c). The grey arcs of parabola represent the boundaries defining the unbounded admissible subset of the configuration space in Stark's regime (see Figure~\ref{fig:admisreg} for a comparison).}
\end{figure}

From now on, we use $\enS\in J =(-\infty, \ensstar)$ as independent
variable, in place of $x_0$. Following \cite{Beletsky_1964} we can write the integrals $\tau_u, \tau_v$ as
\begin{align}
\tau_u(\enS) & = \frac{1}{\sqrt{f \xi_1}}\int_0^{\arcsin \rxi}
\frac{d\varphi}{\sqrt{1-\frac{\xi_2}{\xi_1}\sin^2\varphi}},
	\label{tauU}\\[3pt]
\tau_v(\enS) & = \frac{1}{\sqrt{f\Delta \eta}}\int_0^{\arcsin\sqrt{1-\frac{R^2}{\xiE\eta_1}}}\frac{d\varphi}{\sqrt{1-\frac{\eta_1}{\Delta \eta}\sin^2\varphi}},
\label{tauV}
\end{align}
where
\[
\rxi = \sqrt{\frac{\xiE - \xi_1}{\xiE - \xi_2}},\qquad \Delta \eta = \eta_1-\eta_2.
\]

We use the result proved below.
\begin{lemma}
  The following properties hold:
  \begin{itemize}
  \item[i)] $\tau_u$ is a strictly increasing function of $\enS$ and
    \begin{equation}
      \lim_{\enS\to \ensstar} \tau_u = +\infty,
      \label{limtauu}
    \end{equation}
  \item[ii)] $\tau_v$ is not monotone in $\enS$ and
    \begin{equation}
      \limsup_{\enS\to\ensstar} \tau_v < +\infty.
      \label{limtauv}
    \end{equation}
  \end{itemize}
\label{lemmaTauTav}
\end{lemma}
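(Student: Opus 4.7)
The plan is to analyze the elliptic-integral representations \eqref{tauU} and \eqref{tauV} directly: each is the product of a prefactor $1/\sqrt{f\cdot(\cdot)}$ and an incomplete elliptic integral of the first kind, with modulus and amplitude depending on $\enS$ through the roots $\xi_1,\xi_2,\eta_1,\eta_2$ of \eqref{xi12eta12} and the exit value $\xiE$. Divergence or boundedness at $\enS=\ensstar$ will be controlled by whether the modulus tends to $1$; monotonicity in $\enS$ by the combined variation of prefactor, modulus, and amplitude.

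For part (i), at $\enS=\ensstar$ the polynomial $U(u)$ has a double root, since $\ensstar^2/f^2=2(\mu+\llS)/f=\xistar^2$; hence by \eqref{xi12eta12}, $\xi_1=\xi_2=\xistar$, so the modulus $k:=\sqrt{\xi_2/\xi_1}\to 1$. Continuity of $x_0(\enS)$ via Lemma~\ref{lemmaEnergy}, together with Remark~\ref{remarkp:regionIV}, gives $\xiE\to\xiEstar>\xistar$, so $\rxi\to 1$ and $\arcsin\rxi\to\pi/2$. Since the complete elliptic integral diverges at $k=1$ while $1/\sqrt{f\xi_1}\to 1/\sqrt{f\xistar}$ remains finite, $\tau_u\to+\infty$. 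For strict monotonicity I would exploit the Vieta identity $\xi_1\xi_2=\xistar^2$ (constant in $\enS$) to write $k=\xistar/\xi_1$ and reduce the problem to a single parameter $\xi_1$. Direct differentiation of \eqref{xi12eta12} gives $d\xi_1/d\enS<0$, so both $k$ and $1/\sqrt{f\xi_1}$ strictly increase in $\enS$; Lemma~\ref{lemmaEnergy} also yields $d\xiE/d\enS<0$. The key remaining step is to show that $\rxi$ increases with $\enS$, i.e.\ that $(\xi_1-\xi_2)/(\xiE-\xi_2)$ decreases: after substituting $\xi_2=\xistar^2/\xi_1$ and computing $d(\rxi^2)/d\enS$, the resulting expression splits into competing terms whose signs I would compare. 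Near $\ensstar$, the divergence $\xi_1'\to-\infty$ dominates and gives positivity; a uniform algebraic bound should extend positivity over all of $J$. Since the elliptic integral of the first kind is strictly increasing in both its arguments, $\tau_u$ is then strictly increasing.

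For part (ii), at $\enS=\ensstar$ the polynomial $V(v)$ does not degenerate, because $\ensstar^2/f^2+2(\mu-\llS)/f=4\mu/f>0$ for every $\llS\in(-\mu,\mu)$. Hence $\Delta\eta$ is positive and finite, $k_v=\sqrt{\eta_1/\Delta\eta}\in(0,1)$, and the upper limit in \eqref{tauV} stays strictly below $\pi/2$ (since $\xiE\eta_1>R^2$), so the integral is bounded in a neighbourhood of $\ensstar$, giving $\limsup_{\enS\to\ensstar}\tau_v<+\infty$. For non-monotonicity I would analyze the other end $\enS\to-\infty$: from \eqref{xi12eta12}, $\eta_1\to 0^+$ and $\Delta\eta\to+\infty$, so the prefactor $1/\sqrt{f\Delta\eta}\to 0$ while amplitude and modulus stay bounded, yielding $\tau_v\to 0$. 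Since $\tau_v(\ensstar)>0$ but $\tau_v\to 0$ as $\enS\to-\infty$, monotone increase is a priori possible, so to rule it out I would evaluate $d\tau_v/d\enS$ at a locus near $\ensstar$: an explicit sign computation on the finite derivatives of prefactor, modulus, and amplitude should produce a negative value there, forcing an interior maximum and hence non-monotonicity. The principal obstacle will be the global positivity of $d(\rxi^2)/d\enS$ in part (i): the product-rule decomposition contains competing terms whose dominance is easy to establish locally near $\ensstar$ (through the divergence of $\xi_1'$), but ensuring dominance throughout the whole interval $J$ requires a careful algebraic estimate.
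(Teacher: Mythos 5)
Your limit computations are sound and coincide in substance with the paper's: for (i) the coalescence $\xi_1,\xi_2\to\xistar$ together with $\xiE\to\xiEstar>\xistar$ makes the modulus and the amplitude tend to $1$ and $\pi/2$ while the prefactor stays finite, giving \eqref{limtauu}; for (ii) the bound \eqref{limtauv} follows because the modulus $\eta_1/\Delta\eta$ stays away from $1$ (the paper phrases this via the complete integral and the arithmetic--geometric mean, which is the same estimate). Your reduction of the monotonicity of $\tau_u$ to the monotonicity of $\rxi$ is also the paper's reduction (there it appears through the term $\frac{d\rxi^2}{d\enS}$ in the explicit derivative of \eqref{tauU}). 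But precisely that step --- the \emph{global} positivity of $\frac{d\rxi^2}{d\enS}$ on all of $J$ --- is the heart of part (i), and you leave it as ``a uniform algebraic bound should extend positivity'': your argument via the blow-up of $\xi_1'$ only gives monotonicity in a neighbourhood of $\ensstar$, so part (i) is not proved as it stands. The paper closes this gap by writing $\rxi^2=(\xiE-\xi_1)/(\xiE-\xi_2)$ and showing separately that the denominator decreases and the numerator increases: from \eqref{xi12eta12}, $\frac{d\xi_1}{d\enS}=-\frac{2\xi_1}{f\Delta\xi}<0$ and $\frac{d\xi_2}{d\enS}=\frac{2\xi_2}{f\Delta\xi}>0$ with $\Delta\xi=\xi_1-\xi_2$, while $\frac{d\xiE}{d\enS}<0$ follows from \eqref{hsx0} and $x_0>x_T$; hence $\frac{d(\xiE-\xi_2)}{d\enS}<0$ at once, and the nontrivial inequality $\frac{d(\xiE-\xi_1)}{d\enS}>0$ (i.e.\ $\xi_1$ decreases faster than $\xiE$, which is exactly what your local argument does not control away from $\ensstar$) is reduced to
\[
R^2\xi_1\Bigl(\frac{1}{x_0+x_T}-\frac{x_T}{2x_0^2}\Bigr)+\xiE\xi_2-\xi_1\frac{\xistar^2}{2x_0^2}\,x_T>0,
\]
which follows from $x_0>x_T$, the Vieta relation $\xi_2=\xistar^2/\xi_1$ you already noted, and Remark~\ref{remarkp:regionIV}. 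Some explicit estimate of this kind is indispensable; without it the strict monotonicity claim is unsupported on most of $J$.

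A second, less serious, gap is the non-monotonicity of $\tau_v$: your plan ($\tau_v\to0$ as $\enS\to-\infty$, plus a negative sign of $d\tau_v/d\enS$ near $\ensstar$) is plausible, but the decisive sign computation is only asserted, and it is not a routine check because the prefactor, the modulus and the amplitude of \eqref{tauV} vary with competing signs (for instance $\Delta\eta$ decreases while $\xiE\eta_1$ also decreases in $\enS$). To be fair, the paper's own proof of (ii) establishes only the limsup bound and never proves non-monotonicity either; since only \eqref{limtauu} and \eqref{limtauv} are used later (Lemma~\ref{lemmaEnergyBar} and the existence of a solution of \eqref{sametau}), this omission is shared with the paper and is far less consequential than the missing global estimate in part (i).
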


\begin{proof}
i) The derivative of  $\tau_u$ with respect to $\enS$ can be written as 
\[ \frac{d \tau_u}{d \enS} =  \frac{1}{\sqrt{f \xi_1}} \frac{1}{\Delta \xi} \left(
\frac{1}{f}\int_0^{\arcsin\rxi}
\frac{1+\frac{\xi_2}{\xi_1}\sin^2\varphi}{\left(1-\frac{\xi_2}{\xi_1}\sin^2\varphi\right)^{\frac{3}{2}}} d\varphi +  
\frac{1}{2} \sqrt{\frac{\xi_1}{\xiE}} \frac{\xiE-\xi_2}{\rxi} \ \frac{d \rxi^2}{d \enS} \right),
\]
where
\[
\Delta \xi = \xi_1-\xi_2.
\]
Therefore, to prove that $\tau_u$ is strictly increasing,
it is sufficient to show that 
\begin{equation}
\frac{d \rxi^2}{d \enS} = \frac{d}{d \enS} \left(\frac{\xiE-\xi_1}{\xiE-\xi_2}\right) > 0.
\label{argincresing}
\end{equation}
From relations
\[
\begin{split}
&\frac{d \xi_1}{d \enS}= -\frac{2\xi_1}{f\Delta \xi} <0, \qquad \frac{d \xi_2}{d \enS}= \frac{2\xi_2}{f\Delta \xi}>0,\cr
&\frac{d \xiE}{d \enS} = \frac{\xiE}{\frac{(\mu+\llK)x_T}{2x_0^2} -\frac{f}{2} \xiE - \frac{f}{2} \frac{R^2}{\xiE}} <0,\cr
\end{split}
\]
where the latter follows from \eqref{csi_star}, \eqref{llsllk}, \eqref{xiEDef} and $x_0>x_T$, we see that
\[
\frac{d (\xiE-\xi_2)}{d \enS} < 0.
\]
Moreover, we have
\begin{equation}
  \frac{d (\xiE-\xi_1)}{d \enS} > 0,
  \label{dxiExi1}
\end{equation}
in fact, relations \eqref{csi_star}, \eqref{llsllk} and \eqref{xiEDef} yield that \eqref{dxiExi1} is equivalent to
\[
R^2\xi_1 \left(\frac{1}{x_0+x_T}-\frac{x_T}{2x_0^2}\right) +
\xiE\xi_2-\xi_1\frac{{\xistar}^2}{2x_0^2}x_T> 0.
\]
which follows from $x_0>x_T$, $\xi_2=\xistar^2/\xi_1$ (see \eqref{xi12eta12} and \eqref{csi_star}), and
Remark~\ref{remarkp:regionIV}. This proves \eqref{argincresing}.

\noindent Furthermore, we have 
\[
\lim_{\enS\to \ensstar} \xi_1 = \lim_{\enS\to \ensstar} \xi_2 = \xistar
\]
and \eqref{xistar2} holds, so that 
\[
\lim_{\enS\to \ensstar} \tau_u = \frac{1}{\sqrt{f \xistar}} \int_0^{\frac{\pi}{2}}
\frac{d\varphi}{\sqrt{1-\sin^2\varphi}}  = +\infty.
\] 
This concludes the proof of i).

ii) We have
\[
\tau_v < \frac{1}{\sqrt{f\Delta \eta}}\int_0^{\pi/2}\frac{d\varphi}{\sqrt{1-\frac{\eta_1}{\Delta \eta}\sin^2\varphi}} = \frac{1}{\sqrt{f\Delta \eta}}\frac{\pi}{2M\Bigl(1,\sqrt{1-\frac{\eta_1}{\Delta \eta}}\Bigr)},
\]
%where
%\[
%\Delta \eta = \eta_1-\eta_2,
%\]
so that
\[
\limsup_{\enS\to \ensstar}\tau_v \leq \frac{\pi}{2\sqrt[4]{\mu f}}
\frac{1}{2\sqrt{1-\frac{-\xistar + 2\sqrt{\mu/f}}{4\sqrt{\mu/f}}}}
<+\infty.
\]

\end{proof}

Using the previous result, to show that a solution of \eqref{sametau}
exists, we only need to find a value of the energy $\enS\in J$ such that
\[
\tau_u(\enS) < \tau_v(\enS).
\]

\begin{lemma}
  There exists $\enBar\in J$ and two functions $\tilde{\tau}_u(h_s), \tilde{\tau}_v(h_s)$ such that
  \[
  \tau_v(\enBar) > \tilde{\tau}_v(\enBar) =
  \tilde{\tau}_u(\enBar) > \tau_u(\enBar).
  \]
  \label{lemmaEnergyBar}
\end{lemma}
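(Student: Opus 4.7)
The strategy is a sandwich argument setting up an application of the intermediate value theorem. Lemma~\ref{lemmaTauTav} already gives $\tau_u(\enS)\to+\infty$ as $\enS\to\ensstar$ while $\tau_v$ stays bounded there, so $\tau_u>\tau_v$ in a left neighbourhood of $\ensstar$; to produce a brake orbit one needs a point $\bar\enS\in J$ where $\tau_u(\bar\enS)<\tau_v(\bar\enS)$. The present lemma certifies such a point by inserting two elementary functions $\tilde\tau_u\ge\tau_u$ and $\tilde\tau_v\le\tau_v$ that coincide at $\bar\enS$; then automatically $\tau_u(\bar\enS)<\tilde\tau_u(\bar\enS)=\tilde\tau_v(\bar\enS)<\tau_v(\bar\enS)$.

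First I would build $\tilde\tau_u$ from~\eqref{tauU}. The integrand $1/\sqrt{1-(\xi_2/\xi_1)\sin^2\varphi}$ is increasing in $\sin^2\varphi$, so bounding it by its value at the right endpoint $\varphi=\arcsin\rxi$ and using the algebraic identity $1-(\xi_2/\xi_1)\rxi^2=\xiE(\xi_1-\xi_2)/\bigl(\xi_1(\xiE-\xi_2)\bigr)$ yields the closed form
\[
\tilde\tau_u(\enS)=\arcsin\rxi\,\sqrt{\frac{\xiE-\xi_2}{f\,\xiE\,(\xi_1-\xi_2)}},
\]
with strict inequality $\tau_u<\tilde\tau_u$. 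In parallel, in~\eqref{tauV} the integrand is strictly greater than $1$ on a set of positive measure (because $0<\eta_1/\Delta\eta<1$), so bounding it below by $1$ gives
\[
\tilde\tau_v(\enS)=\frac{1}{\sqrt{f\Delta\eta}}\,\arcsin\sqrt{1-\frac{R^2}{\xiE\,\eta_1}},
\]
with $\tilde\tau_v<\tau_v$ strictly.

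It remains to show that the continuous map $\enS\mapsto\tilde\tau_u(\enS)-\tilde\tau_v(\enS)$ changes sign on $J$. Near $\ensstar$ we have $\xi_1,\xi_2\to\xistar$, so $\rxi\to 1$ and $\sqrt{\xi_1-\xi_2}\to 0$, giving $\tilde\tau_u\to+\infty$, while $\eta_1,\eta_2,\xiE$ remain bounded away from degenerate values and therefore $\tilde\tau_v$ stays bounded; hence $\tilde\tau_u>\tilde\tau_v$ there. The other sign would be exhibited at a suitable interior (or $\enS\to-\infty$) value, where the asymptotic expansions of $x_0,\xi_1,\xi_2,\xiE,\eta_1,\eta_2$ obtained from~\eqref{hsx0}--\eqref{xi12eta12} make $\tilde\tau_u<\tilde\tau_v$; a continuity argument then supplies $\bar\enS$.

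The principal obstacle is precisely the last sign check. Expanding~\eqref{hsx0} for large $|\enS|$ yields $x_0\sim -\enS/f$, but the natural leading-order contributions to $\xiE-\xi_1$ cancel exactly because $\llK-\llS=fR^2/2$ by~\eqref{llsllk}; consequently $\xiE-\xi_1$ vanishes faster than $1/|\enS|$, and one must push the expansions of both $\rxi$ and the factor $\sqrt{(\xiE-\xi_2)/(\xi_1-\xi_2)}$ to a sufficiently high order to obtain a definite sign in the comparison with $\tilde\tau_v$. Once such a point is exhibited, the intermediate value theorem on $\tilde\tau_u-\tilde\tau_v$ delivers $\bar\enS\in J$.
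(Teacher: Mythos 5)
Your sandwich set-up is legitimate and close in spirit to the paper's: the endpoint estimate applied to \eqref{tauU} does give the valid strict upper bound $\tau_u<\arcsin\rxi\,\sqrt{(\xiE-\xi_2)/\bigl(f\,\xiE\,(\xi_1-\xi_2)\bigr)}$, and bounding the integrand of \eqref{tauV} below by $1$ gives exactly the intermediate minorant the paper also uses. The genuine gap is that you never exhibit a value of $\enS\in J$ at which $\tilde{\tau}_u\leq\tilde{\tau}_v$: you state that the opposite sign ``would be exhibited'' at some interior or asymptotic value and you yourself flag this sign check as the principal obstacle. But producing such a point (equivalently, producing $\enBar$) \emph{is} the content of the lemma -- Lemma~\ref{lemmaTauTav} already supplies the other sign near $\ensstar$ -- so as written the argument proves nothing beyond what was already known. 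Incidentally, the difficulty you anticipate is overstated: the near-cancellation you observe makes $\xiE-\xi_1$ \emph{small}, which only helps, since your $\tilde{\tau}_u$ satisfies $\tilde{\tau}_u\leq\frac{\pi}{2}\sqrt{(\xiE-\xi_1)/\bigl(f\,\xiE\,(\xi_1-\xi_2)\bigr)}$; a crude bound such as $\xiE-\xi_1\leq R^2/\xiE+f{\xistar}^2/|\enS|$ (obtained from $\xi_1=-\enS/f+\sqrt{\enS^2/f^2-{\xistar}^2}$ and \eqref{hsx0}, \eqref{xiEDef}) already gives $\tilde{\tau}_u=O(|\enS|^{-1})$ while $\tilde{\tau}_v$ decays only like $|\enS|^{-1/2}$, so no high-order expansion should be needed -- but this step still has to be written down and is missing.

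By contrast, the paper avoids any limit or intermediate-value argument: it deliberately degrades the bounds further (using $\sqrt{1-(\xi_2/\xi_1)\sin^2\varphi}>\cos\varphi$ together with $\log(1+x)<x$ for $\tau_u$, and the monotonicity of $\xiE\eta_1$ plus $\Delta\eta/2<\xi_1+\sqrt{2(\mu-\llS)/f}$ for $\tau_v$) precisely so that $\tilde{\tau}_u$ and $\tilde{\tau}_v$ share the common factor $\bigl(\xi_1+\sqrt{2(\mu-\llS)/f}\bigr)^{-1/2}$ and the residual dependence on $\enS$ enters only through $\Delta\xi$. The equation $\tilde{\tau}_u=\tilde{\tau}_v$ then collapses to an explicitly solvable algebraic equation whose root is the closed-form value \eqref{enbar}, which is automatically smaller than $\ensstar=-f\xistar$ and hence lies in $J$. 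If you wish to keep your sharper endpoint bound for $\tau_u$, you must either complete the asymptotic sign check along the lines sketched above or, as the paper does, engineer the two comparison functions so that their crossing can be located explicitly.
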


\begin{proof}
	
Using relation
\[
\sqrt{1-\frac{\eta_1}{\Delta \eta}\sin^2\varphi} < 1
\]
we get
\[
\tau_v > \frac{1}{\sqrt{f\Delta \eta}}
\arcsin\sqrt{1-\frac{R^2}{\xiE\eta_1}}
\]
in the interval $J$, where the function $\xiE
	\eta_1$ is decreasing. Indeed, we have
	\[
	\frac{d(\xiE\eta_1)}{d \enS} = \frac{\xiE\eta_1}{f}\Biggl(\frac{1}{\sqrt{\frac{\enS^2}{f^2}+\frac{2(\mu-\llS)}{f}}} - \frac{1}{\frac{\xiE}{2} +  \frac{R^2}{2\xiE} - \frac{(\mu+\llK)x_T}{2fx_0^2}}\Biggr) <0
	\]
which follows from \eqref{hsx0}, $x_0>x_T$ and $\llS \in [\llS^-,\llS^+]$. 	
Thus 
  \[
  %\begin{split}
  \xiE \eta_1>\xiE^*\eta_1^*, \qquad
  \eta_1^* = \eta_1(\ensstar) = -\xistar + 2\sqrt{\mu/f}.
  %&(\Delta \eta)^* = 4\sqrt{\mu/f}\cr
  %\end{split}
  \]
  Since $\xiE^* > \xistar$ (see Remark~\ref{remarkp:regionIV}),
  we obtain
  \[
 \frac{1}{\sqrt{f\Delta \eta}} \arcsin\sqrt{1-\frac{R^2}{\xiE\eta_1}} >\frac{1}{\sqrt{f\Delta \eta}} \arcsin\sqrt{1-\frac{R^2}{\xistar\eta_1^*}}.
  \]
 Moreover, we have
  \[
  \left(\frac{\Delta \eta}{2}\right)^{\frac{1}{2}} =  \left(\frac{\enS^2}{f^2}+2\frac{\mu-\llS}{f}\right)^{1/4}<\left(\xi_1^2+2\frac{\mu-\llS}{f}\right)^{1/4}<\left(\xi_1+\sqrt{\frac{2(\mu-\llS)}{f}}\right)^{1/2}.
  \]
  Hence, we can set   
  \[
  \tilde{\tau}_v (h_s) = \frac{1}{\sqrt{2f}\left(\xi_1+\sqrt{\frac{2(\mu-\llS)}{f}}\right)^{1/2}}\arcsin\sqrt{1-\frac{R^2}{\xistar\eta_1^*}}.
  \]
  Then, using
  \[
  	\sqrt{1-\frac{\xi_2}{\xi_1}\sin^2{\phi}}>\cos{\phi},  
  \]
  we get
  \[
  \begin{split}
    &\tau_u < \frac{1}{2} \frac{1}{\sqrt{f\xi_1}}
    \log\left(1+ \frac{2\rxi}{1-\rxi}\right)
    < \frac{1}{\sqrt{f\xi_1}}
    \left(
\frac{\rxi}{1-\rxi}
%\frac{1}{\frac{1}{\rxi}-1 }
\right)\cr  
    & =
    \frac{1}{\sqrt{f\xi_1}}
    \left( \frac{1}{\sqrt{ 1 + \frac{\Delta \xi}{\xiE-\xi_1}}-1 }\right)<
    \frac{1}{\sqrt{f\xi_1}}
    \left( \frac{1}{\sqrt{ 1 + \frac{\Delta \xi}{C_2}}-1 }\right)\cr      
  \end{split}
  \]
  which follows from $0<r_{\xi}<1$ for $\enS \in J$ and Remark~\ref{remarkp:regionIV}. From relation
  \[
  \left(1+\frac{1}{\xi_1}\sqrt{\frac{2(\mu-\llS)}{f}}\right)^{1/2} < \left(1+\frac{1}{\xistar}\sqrt{\frac{2(\mu-\llS)}{f}}\right)^{1/2}
  \]
  we have
  \[
  \tilde{\tau}_u(h_s) = \frac{1}{\sqrt{f}} \frac{\left(1+\frac{1}{\xistar}\sqrt{\frac{2(\mu-\llS)}{f}}\right)^{1/2}}{\left(\xi_1+\sqrt{\frac{2(\mu-\llS)}{f}}\right)^{1/2}} \frac{1}{\sqrt{ 1 + \frac{\Delta \xi}{C_2}}-1 }.
  \]  
  Set
  \[
  K_1 =
  \sqrt{2} \left(1+\frac{1}{\xistar}\sqrt{\frac{2(\mu-\llS)}{f}}\right)^{1/2},
  \qquad K_2 = \arcsin\sqrt{1-\frac{R^2}{\xistar\eta_1^*}}.
  \]
    
  \noindent We obtain $\tilde{\tau}_u=\tilde{\tau}_v$ in 
  
  \begin{equation}
    \enBar =-f\left(\xistar^2+\frac{C_2^2}{4}\frac{K_1^2}{K_2^2}\left(2+\frac{K_1}{K_2}\right)^2\right)^{1/2}.
    \label{enbar}
  \end{equation}
  
	%Using relations
	%\[
	%\Delta \xi = \sqrt{\Delta \eta^2-16\frac{\mu}{f}} \ge %2\sqrt{2}\sqrt[4]{\frac{\mu}{f}}\sqrt{\Delta %\eta-4\sqrt{\frac{\mu}{f}}} \ge %2\sqrt{2}\sqrt[4]{\frac{\mu}{f}}(\sqrt{\Delta \eta} - %2\sqrt[4]{\frac{\mu}{f}} )
	%\]
	%we get
	%\[
	%\tau_u < \tilde{\tau}_u
	%\]
	%with
		
\end{proof}
This concludes the proof of the existence of a family of brake periodic orbits
$\widehat{\bx}(t;\llS)$ parametrised by $\llS$.

%\begin{proposition}
%  For $\llS\in (???)$ there exists an interval $J'\subset J$ of values of $\enS$ where there is a unique symmetric brake orbit. 
%\end{proposition}

%\begin{proof}
%...  
%\end{proof}

% ===============================================
\section{The Sun-shadow map}
\label{s:ssmap}

To study the Sun-shadow dynamics, it is useful to construct a
Poincar\'e-like map. Traditionally, the Poincar\'e maps of autonomous
two-dimensional Hamiltonian systems are defined by fixing the value of
the Hamiltonian. This is not possible for the Sun-shadow dynamics,
since the Hamiltonian is not conserved along the flow, see Proposition~\ref{HsLsprop}. However, here we can fix the value of
$\mathscr{L}_s$. Indeed, even though $\mathscr{L}_s$ is not a constant
of motion as well, it assumes the same value in Stark's regime before
and after the satellite crosses the shadow.  To define a Poincar\'e map we also need to introduce a section. The selected section
 corresponds to the upper boundary of the shadow region in the
$(x,y)$ configuration space, i.e. to $y=R$, $x\geq0$. We consider
trajectories leaving the section with $p_y>0$, in the outward
direction with respect to the shadow region.  Since the dependence on
the coordinates is decoupled in the variables $u,v$ (see \eqref{kepl_sepvariables}, \eqref{HJStark}), we
decided to use them to define the map. Thus, we define the
Sun-shadow map as
%the Poincar\'e map
\begin{equation*}
	\begin{split}
	\mathfrak{S}: \domsigma\subset\R^2 &\to \R^2,\cr
	(u,p_u) &\mapsto (u',p'_u),\cr
	\end{split}
\end{equation*}
where the domain $\domsigma$ is discussed below,
and $(u,p_u),(u',p'_u)$ belong to the section $\Sigma$
defined as
\[
\Sigma = \{(p_u,p_v,u,v): \ |u|\geq \sqrt{R},\ uv = R, \ up_v >
\max(0,-p_uv), \ \mathscr{L}_s = \llS\}.
\]
The conditions $|u|\geq \sqrt{R}$, $uv = R$ are necessary to select the
desired section in the $(x,y)$ configuration space. The
condition $up_v>-p_uv$ is equivalent to $p_y>0$ (see
\eqref{pupv}). The additional condition, $up_v>0$, assures that every
point $(u,p_u)\in\Sigma$ corresponds to only one trajectory. Indeed,
there are points $(u,p_u)$ for which $p_y>0$ in both the cases $p_v>0$
and $p_v<0$.

\begin{proposition}
  The map $\mathfrak{S}$ is not defined in the points $(u,p_u)$ with
  \begin{equation}
    p^2_u \leq 2(\mu+\llS) + fR^2 + (f-2(\mu-\llS)/R^2)u^4.
    \label{defDomainMap}
  \end{equation}
  Moreover, in the second and fourth quadrant of the $(u,p_u)$ plane
  $\mathfrak{S}$ is not defined if $\llS > \mu - \frac{fR^2}{2}$, while if $\llS < \mu - \frac{fR^2}{2}$ it is not
  defined only in the points $(u,p_u)$ with
  \begin{equation}
    u^4\leq\frac{2(\mu+\llS)+fR^2}{2(\mu-\llS)-fR^2}R^2. 
    \label{defDomainMapP2}
  \end{equation}
  \label{dominiomappa}
\end{proposition}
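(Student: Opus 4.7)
The plan is to reduce both parts of the proposition to purely algebraic constraints on the section $\Sigma$: a point $(u,p_u)$ with $|u|\geq\sqrt{R}$ belongs to $\Sigma$ if and only if one can solve for $p_v$, with $v=R/u$, so that $\mathscr{L}_s=\llS$ and the two sign conditions $up_v>0$ and $up_v+p_u v>0$ are simultaneously satisfied. The cornerstone is to express $p_v^2$ as a function of $(u,p_u)$; I would do this by plugging $v=R/u$ into \eqref{Lsuv} and solving, obtaining
\[
p_v^2\,u^4 \;=\; p_u^2 R^2 \;+\; \bigl(2(\mu-\llS)-fR^2\bigr)u^4 \;-\; R^2\bigl(2(\mu+\llS)+fR^2\bigr).
\]

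For the first claim, the existence of a real $p_v$ with $up_v>0$ is equivalent to $p_v^2>0$; negating this and rearranging yields exactly \eqref{defDomainMap}, so the set described there is outside the domain. In the first and third quadrants this is in fact the only obstruction: there $p_u v>0$, whence $\max(0,-p_u v)=0$ and the extra condition $up_v>-p_u v$ is automatic as soon as $up_v>0$.

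For the second claim, observe that in the second and fourth quadrants $p_u$ and $u$ have opposite signs while $v=R/u$ shares the sign of $u$, hence $p_u v<0$ and $\max(0,-p_u v)=-p_u v>0$. The section condition therefore strengthens to $up_v>-p_u v$; since both sides are positive, squaring is equivalent and gives $u^2 p_v^2>p_u^2 v^2=p_u^2 R^2/u^2$, i.e. $p_v^2>p_u^2 R^2/u^4$. Substituting the identity above, the $p_u^2 R^2$ contributions cancel and the surviving condition is
\[
\bigl(2(\mu-\llS)-fR^2\bigr)u^4 \;>\; R^2\bigl(2(\mu+\llS)+fR^2\bigr),
\]
which is striking in that it no longer involves $p_u$ at all.

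The conclusion then follows by a sign dichotomy on $2(\mu-\llS)-fR^2$. When $\llS>\mu-fR^2/2$ the coefficient is non-positive while the right-hand side is positive for admissible $\llS$, so no $u$ satisfies the inequality and $\mathfrak{S}$ is undefined throughout the second and fourth quadrants. When $\llS<\mu-fR^2/2$, dividing by the positive coefficient gives precisely the threshold whose complement is \eqref{defDomainMapP2}. I do not expect a genuine obstacle: the whole argument is algebraic once the $p_v^2$ identity is in hand, and the only delicate bookkeeping is to track the interplay between the two sign conditions defining $\Sigma$ in each quadrant.
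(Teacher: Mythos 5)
Your proof is correct and takes essentially the same route as the paper: the identity you derive for $p_v^2u^4$ from $\mathscr{L}_s=\llS$ with $v=R/u$ is exactly what the paper obtains from the separated equations \eqref{HJStark}, and both parts then follow, as in the paper, from requiring $p_v^2>0$ and from squaring $up_v>-p_uv$ in the second and fourth quadrants with the same sign dichotomy on $2(\mu-\llS)-fR^2$. Your intermediate inequality $\bigl(2(\mu-\llS)-fR^2\bigr)u^4>R^2\bigl(2(\mu+\llS)+fR^2\bigr)$ is the correct form (the paper's corresponding display drops a factor $R^2$, though its final conclusion agrees with yours).
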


\begin{proof}
  For each point $(u,p_u)$ in the domain of the map, we
  have
  \[v = \frac{R}{u},\] 
  and $p_v$ is defined by \eqref{HJStark}$_2$.  Condition
  \eqref{defDomainMap} corresponds to $p_v^2\leq 0$, that is not
  possible. In the second and fourth quadrant of
  the $(u,p_u)$ plane, the condition $up_v > \max(0,-p_uv)$ results in
  \[
  p_vu>-p_u\frac{R}{u}
  \]
  which implies 
  \[
  \bigl(2(\mu-\llS)-fR^2\bigr)u^4-2(\mu+\llS)-fR^4>0.
  \]
  If $\llS > \mu - \frac{fR^2}{2}$, we get
  \[
  u^4 < \frac{2(\mu+\llS)+fR^2}{2(\mu-\llS)-fR^2}R^2 <0,
  \]
  meaning that the map is not defined.  On the other hand, if
  $\llS < \mu - \frac{fR^2}{2}$, the previous condition is
  fulfilled for
  \[
  u^4>\frac{2(\mu+\llS)+fR^2}{2(\mu-\llS)-fR^2}R^2.
  \]
  
\end{proof}

\begin{figure}[t!]
  \begin{center}
    \includegraphics[width=1\textwidth]{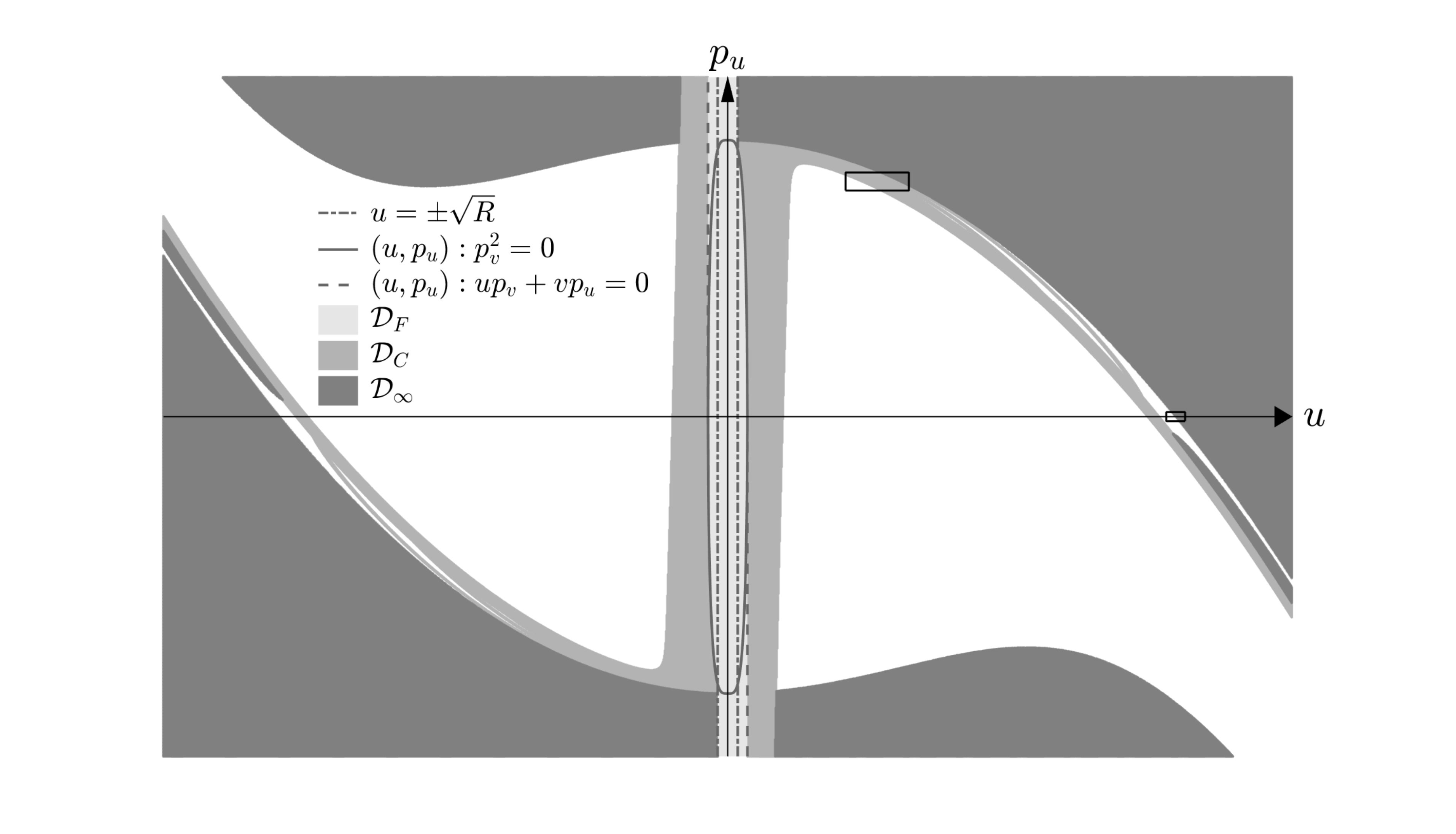}
    \caption{The domain ${\domsigma}$ of the Sun-shadow map is
      represented by the white area.
      %${\cal{D}_{\infty}}$ and
      %${\cal{D}_{C}}$ are computed numerically. We propagate a
      %$4501\times4001$ grid in the $(u,p_u)$ plane, with
      %$-4500\leq u\leq 4500$ $[\kilo\meter]$, $-2000\leq p_u\leq
      %2000$ $[\kilo\meter\per\second]$.  Here $\llS=348600$
      $\kilo\cubic\meter\per\squaren\second$, $f=9.12\times 10^{-9}$
      $\kilo\meter\per\squaren\second$.  A magnification of the region
      in the small and smaller rectangles is shown in
      Figures~\ref{fig:ssdomEn} and \ref{fig:unstManConstr}
      respectively.}
    \label{fig:ssdom}
  \end{center}
\end{figure}
  
The domain $\domsigma\subset \R^2 $ does not include
the points defined in Proposition~\ref{dominiomappa}, nor the points
corresponding to trajectories which go to infinity or collide with the
Earth before going back to $\Sigma$.
In Figure~\ref{fig:ssdom}, for a specific choice of $\llS$ and $f$, the domain ${\domsigma}$ is drawn as the white area in a portion of the $(u,p_u)$ plane. The light grey region represents the set ${\cal D}_{F}$ of forbidden points in Proposition~\ref{dominiomappa}. The other two grey areas contain part of the sets ${\cal D}_{\infty}$ (darker) and ${\cal D}_{C}$ (lighter) corresponding to the trajectories which go to infinity and collide with the Earth, respectively. 

%In Figure \ref{fig:ssdom}, for a specific choice of $\llS$ and $f$, we draw the domain ${\domsigma}$ as the white area in the $(u,p_u)$ plane. The light grey region contains the set of forbidden points ${\cal{D}_{F}}$ of Proposition \ref{dominiomappa}. The dark grey area represents the set ${\cal{D}_{\infty}}$ of points corresponding to the trajectories which go to infinity. Finally, the last grey set ${\cal{D}_{C}}$ corresponds to trajectories colliding with the Earth.

%In Figure~\ref{fig:ssimage} we show the portion of the image of the domain $\domsigma$ under $\mathfrak{S}$ which is contained in the same region of Figure~\ref{fig:ssdom}.

\begin{figure}[h!]
  \begin{center}
    \includegraphics[width=1\textwidth]{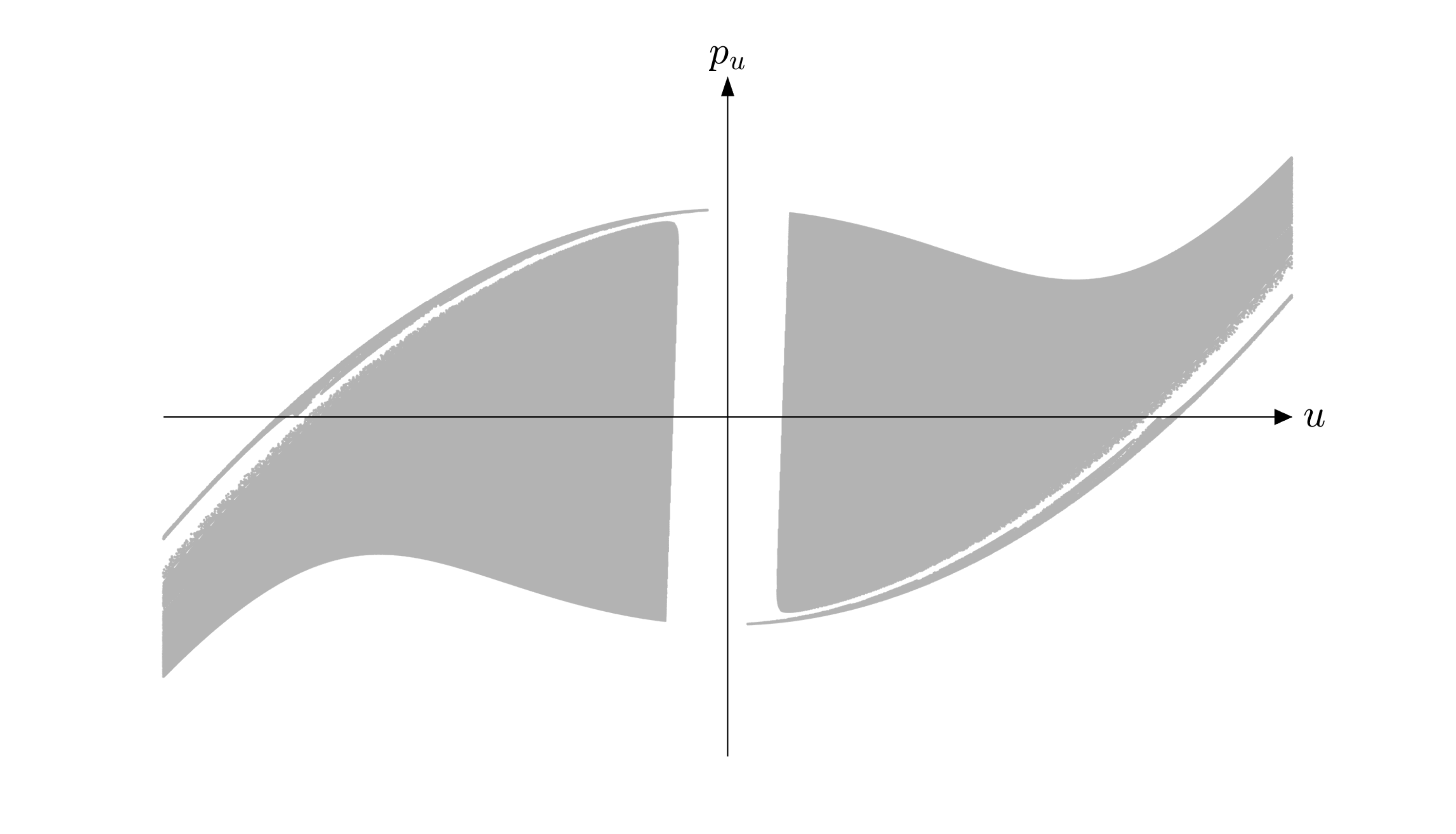}
    \caption{The image of the Sun-shadow map.}
    %		$\llS=348600$ $\kilo\cubic\meter\per\squaren\second$,
    %		$f=9.12\times 10^{-9}$ $\kilo\meter\per\squaren\second$.}
    \label{fig:ssimage}
  \end{center}
  
\end{figure}

Figure~\ref{fig:ssimage} shows the image of the
domain $\domsigma$ under $\mathfrak{S}$ in the same portion of the $(u,p_u)$ plane represented in Figure~\ref{fig:ssdom}.

\begin{remark}
In the image of the map we may have points belonging to
${\cal{D}}_{\infty}$ or ${\cal{D}}_{C}$, so that we cannot iterate the
map again.
\end{remark}

\begin{figure}[h!]
  \begin{center}
    \begin{subfigure}{0.45\textwidth}
      \includegraphics[width=\textwidth]{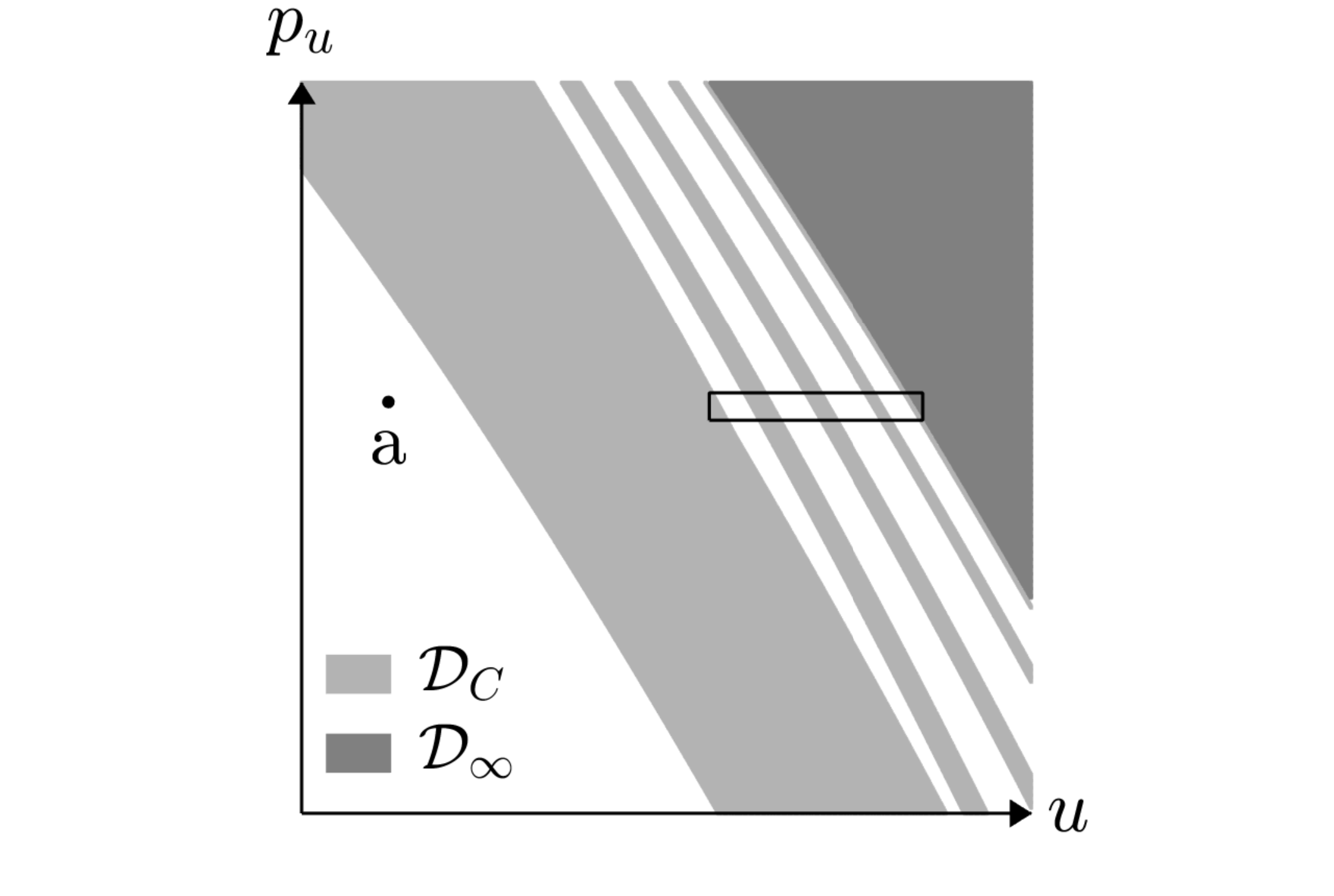}
    \end{subfigure}
    \begin{subfigure}{0.45\textwidth}
      \includegraphics[width=\textwidth]{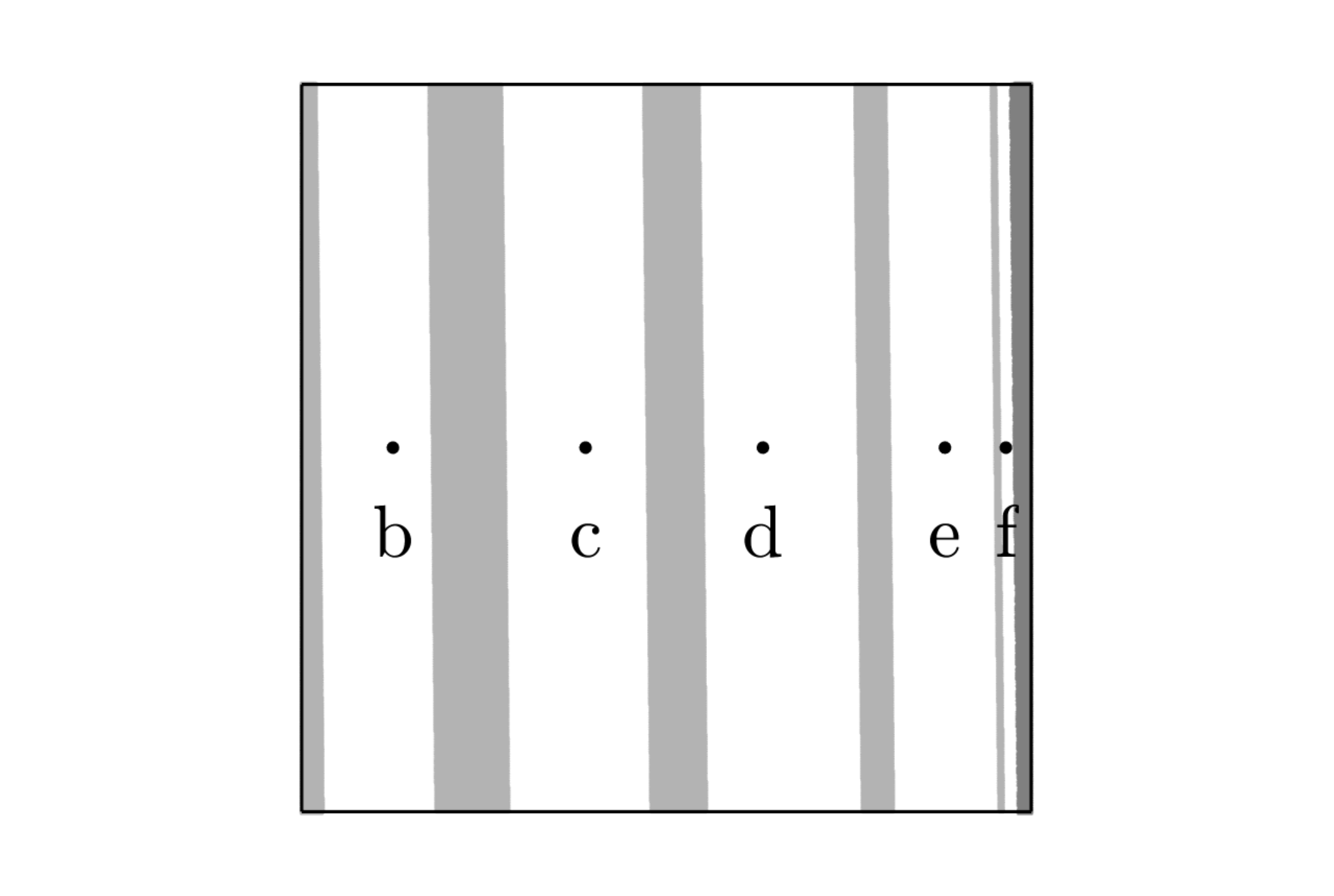}
    \end{subfigure}
    \begin{subfigure}{0.45\textwidth}
      \includegraphics[width=\textwidth]{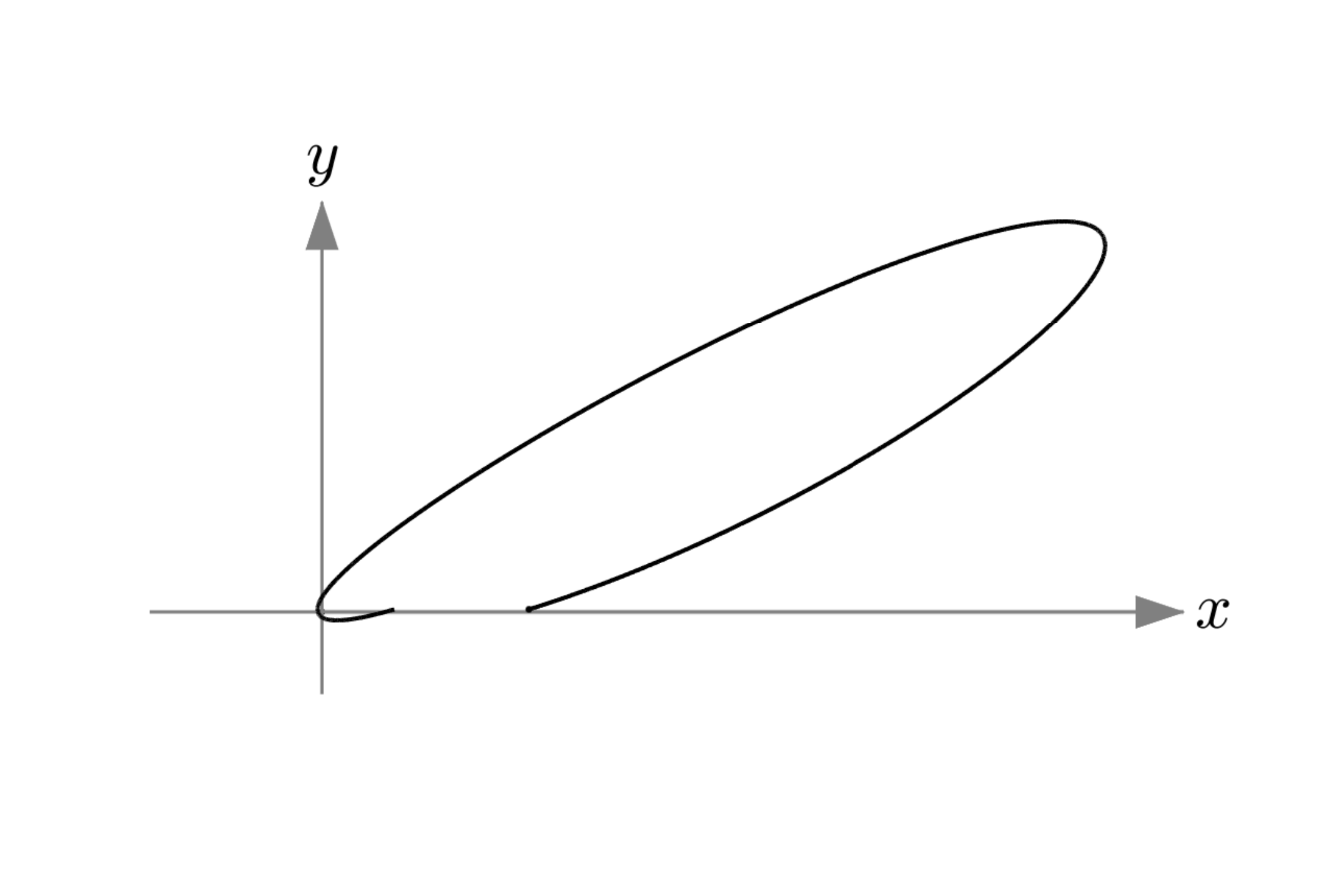}
      \caption{}
    \end{subfigure}
    \begin{subfigure}{0.45\textwidth}
      \includegraphics[width=\textwidth]{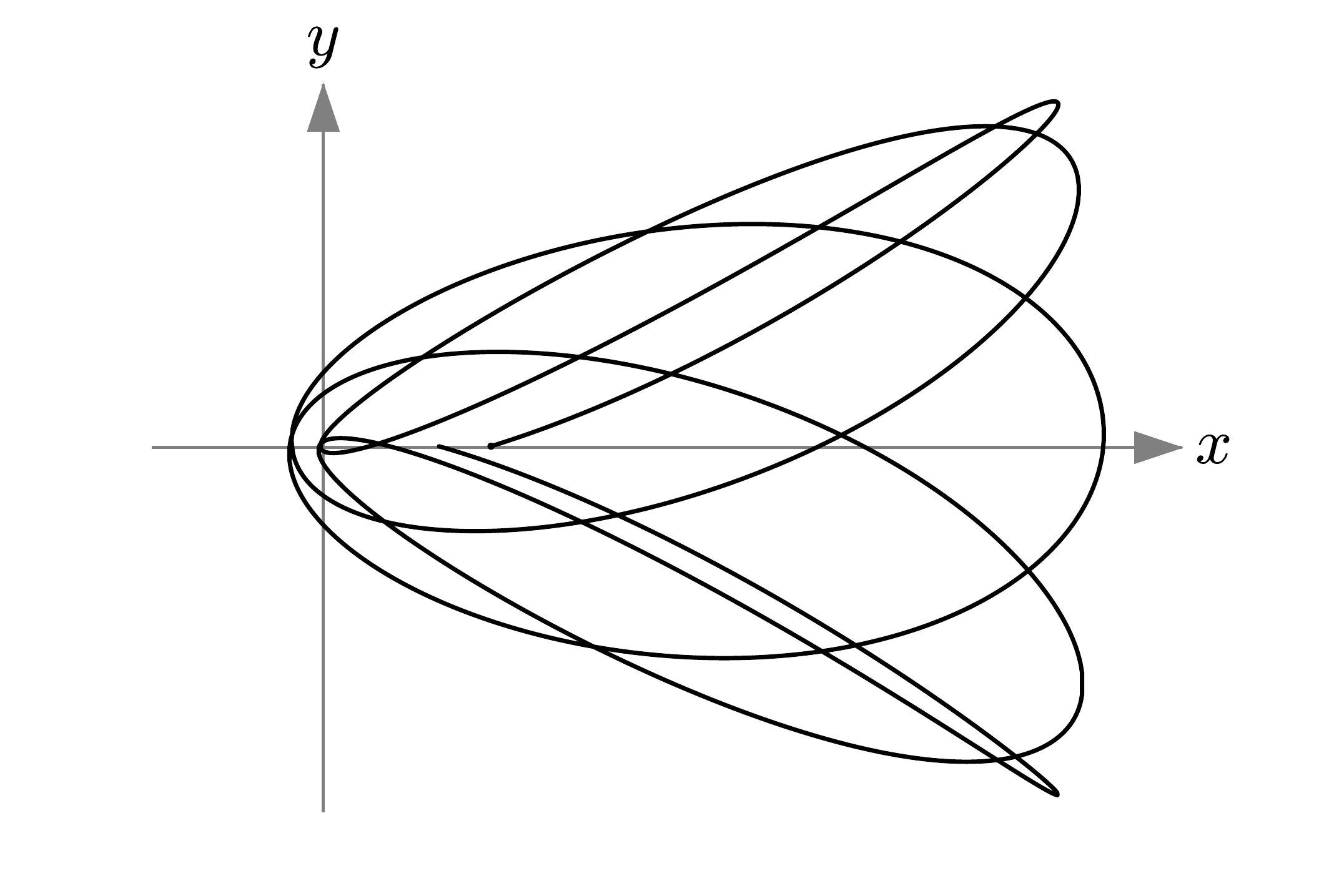}
      \caption{}
    \end{subfigure}
    \begin{subfigure}{0.45\textwidth}
      \includegraphics[width=\textwidth]{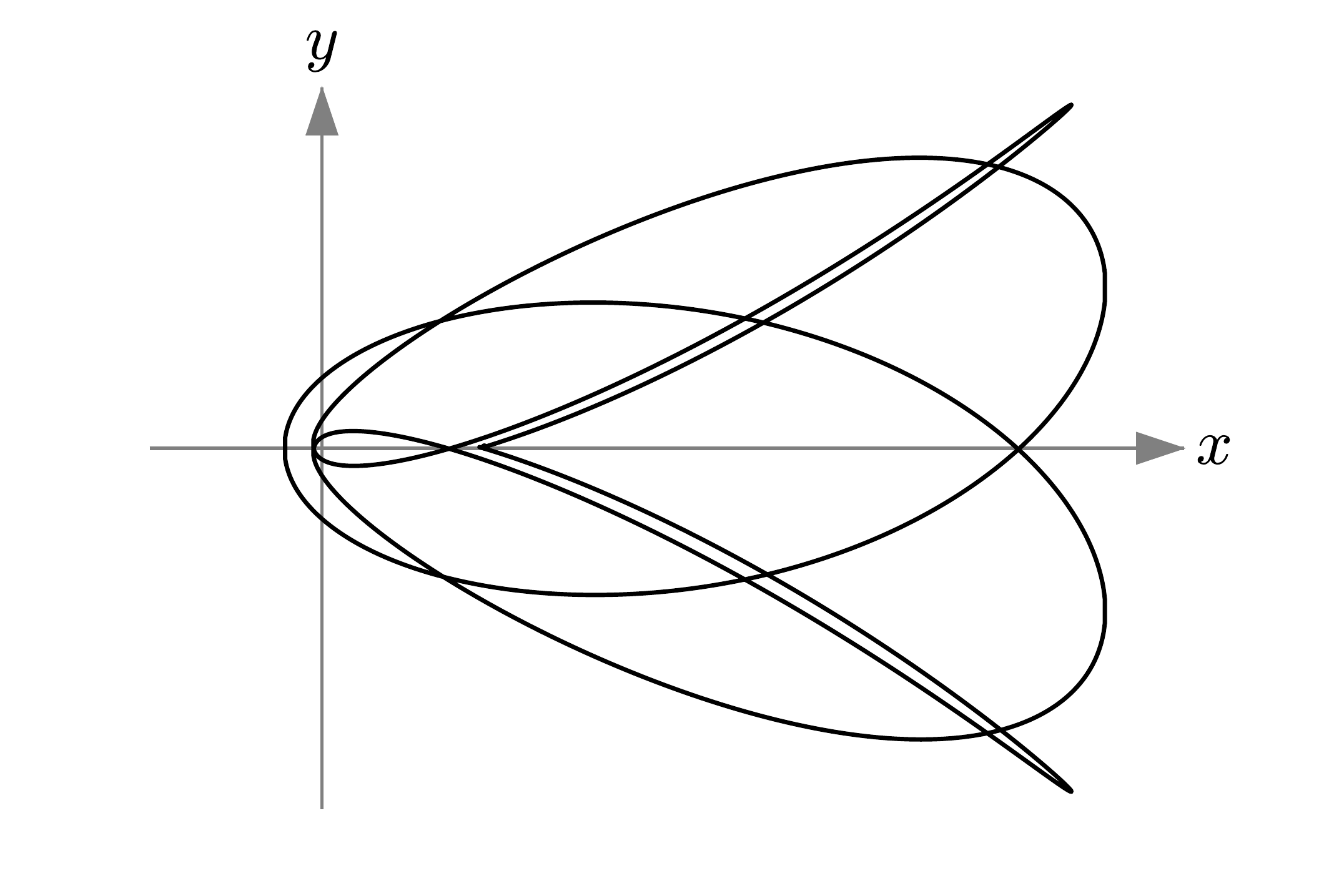}
      \caption{}
    \end{subfigure}
    \begin{subfigure}{0.45\textwidth}
      \includegraphics[width=\textwidth]{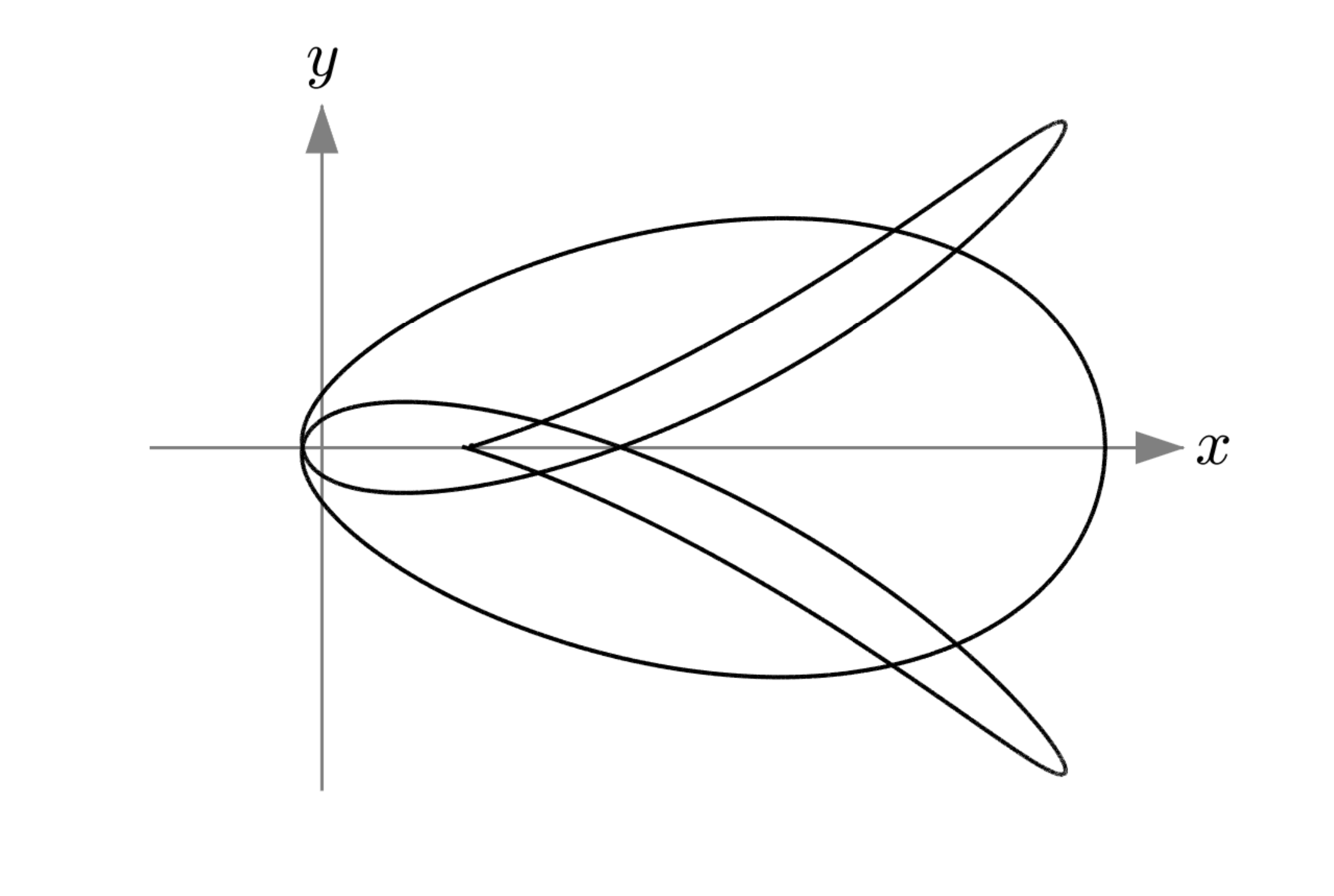}
      \caption{}
    \end{subfigure}
    \begin{subfigure}{0.45\textwidth}
      \includegraphics[width=\textwidth]{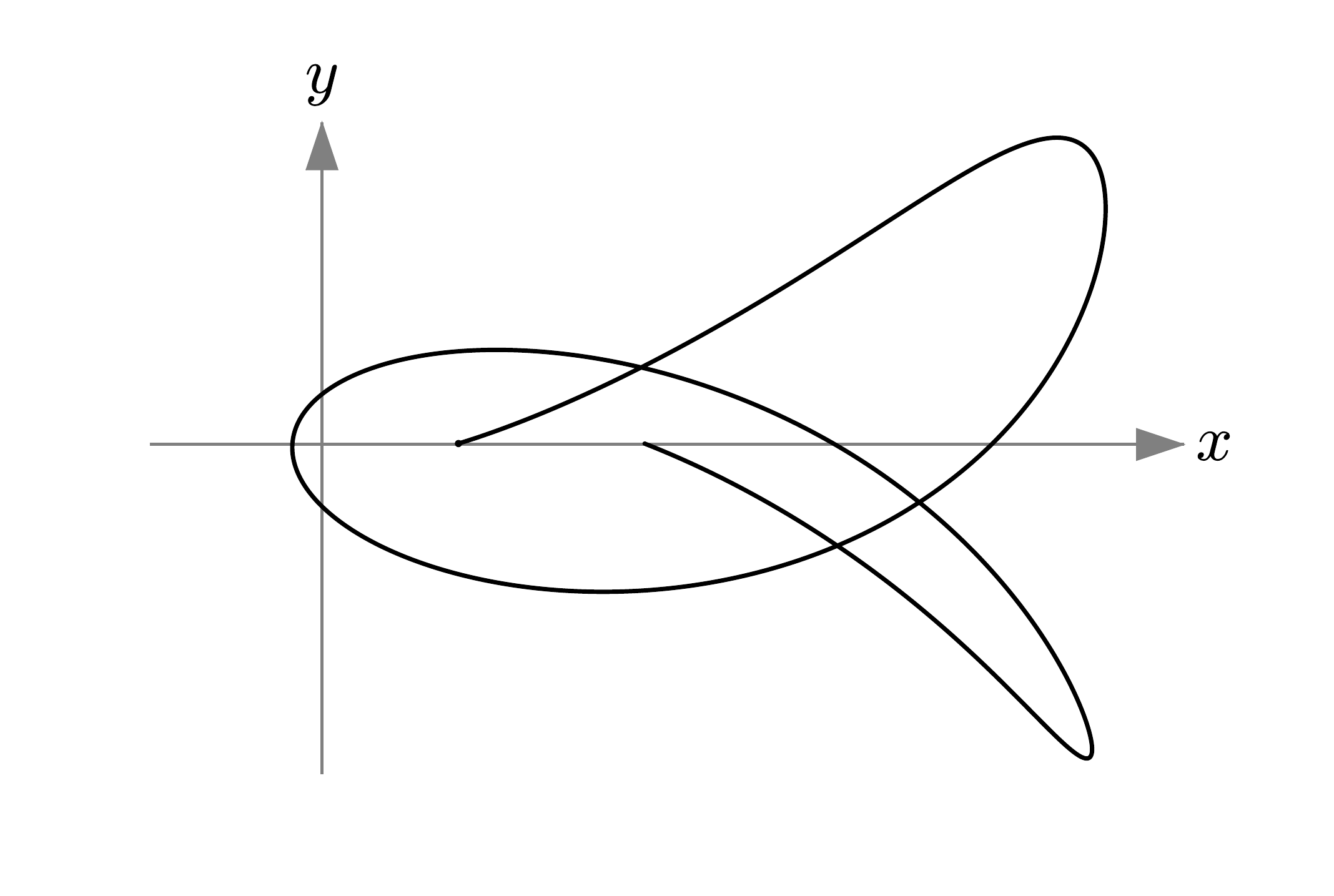}
      \caption{}
    \end{subfigure}
    \begin{subfigure}{0.45\textwidth}
      \includegraphics[width=\textwidth]{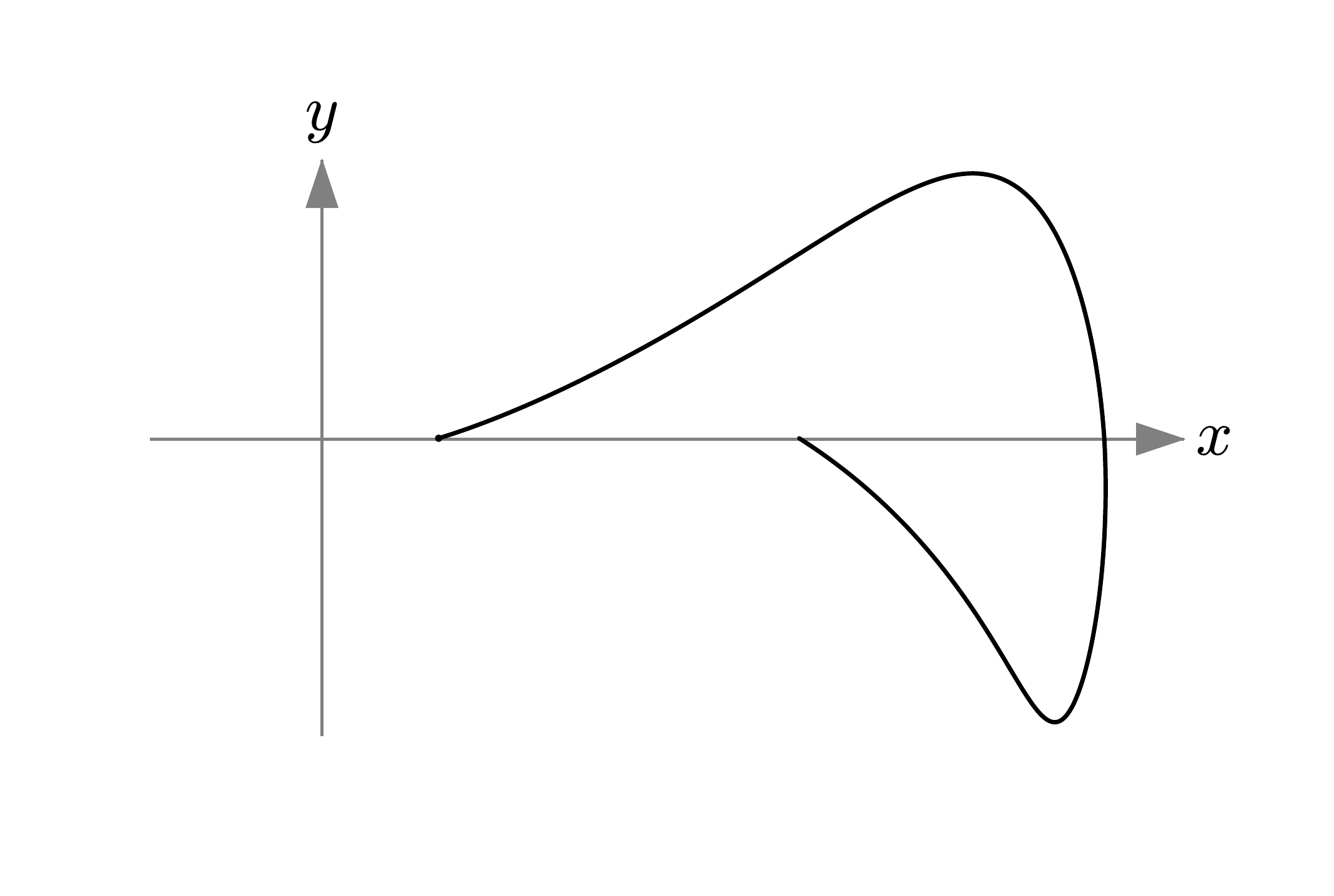}
      \caption{}
    \end{subfigure}
  \caption{Orbits with different winding numbers, corresponding to one
    iteration of $\mathfrak{S}$. The selected initial points are shown
    in the two top pictures and are labeled with a, b, c, d, e, f. On
    top right there is the magnification of the small rectangular region
    on top left.}
  \label{fig:ssdomEn}
  \end{center}
\end{figure}

%In Figure~\ref{fig:ssdomEn} we show the magnification of the small rectangular region appearing in Figure~\ref{fig:ssdom}, where we can appreciate the complexity of the structure of the domain ${\cal  D}$.
In Figure~\ref{fig:ssdomEn}, the magnification of the larger rectangular region appearing in Figure~\ref{fig:ssdom} highlights the complexity of the structure of ${\domsigma}$. We have selected five points, labelled with b, c, d, e, f, in the white `{\em corridors}', and one point, labelled with a, in the larger white region on the left. In the same figure we show the portion of the trajectories corresponding to one iteration of the selected points under the map. A winding number around the origin can be associated to each trajectory by joining with a straight line their initial and final points. The values of this topological invariant are $-4$, $-3$, $-2$, $-1$, $0$, $+1$ for the cases (b), (c), (d), (e), (f), (a), respectively.  Since we get a different value %of the winding number 
for all these cases, the white corridors must belong to different connected components of ${\cal D}$.

\begin{proposition}
  The map $\mathfrak{S}$ is differentiable in its definition domain
  $\domsigma$.
  \label{mapDiff}
\end{proposition}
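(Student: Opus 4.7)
The plan is to express $\mathfrak{S}$ as a finite composition of $C^{\infty}$ maps on $\domsigma$ and to conclude differentiability by the chain rule. First I would lift a point $(u, p_u) \in \domsigma$ to the full state $\Ubf = (p_u, p_v, u, v)$: the constraints $uv = R$, $\mathscr{L}_s = \llS$, together with the sign conditions $p_y > 0$ and $up_v > 0$ appearing in the definition of $\Sigma$, determine $v = R/u$ and single out the correct branch of $p_v$ from the second equation of \eqref{HJStark}. This lifting is smooth because Proposition~\ref{dominiomappa} guarantees $p_v^2 > 0$ strictly on $\domsigma$.

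Next I would flow $\Ubf$ forward under Stark's dynamics \eqref{UDynSystS}, whose vector field $\bX_s$ is polynomial and hence globally $C^{\infty}$. The first return to the shadow boundary is defined implicitly by $F\bigl(\Phi_s^{\tau}(\Ubf)\bigr) = 0$, where $F(p_u, p_v, u, v) = u^2 v^2 - R^2$. A direct computation along the Stark flow gives
\[
\frac{dF}{d\tau} = 2uv\,(p_u v + u p_v) = 2uv\,(u^2+v^2)\,p_y,
\]
so transversality of the flow to $\{F=0\}$ is equivalent to $p_y \neq 0$ at the crossing; for points in $\domsigma$ the trajectory genuinely enters the shadow rather than touching it tangentially, so this nondegeneracy is automatic. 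The implicit function theorem then produces a smooth first-crossing time $\tau_s(\Ubf)$, and hence a smooth state at entry into Kepler's regime. Across the boundary the only nontrivial update is the energy jump $\enK = \enS + \tfrac{f}{2}(u^2 - v^2)$ from Proposition~\ref{HsLsprop}, which is a smooth function of the state.

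I would then iterate the construction with Kepler's flow \eqref{UDynSystK}: the vector field $\bX_k$ is smooth away from the origin, which by the very definition of $\domsigma$ is avoided before the orbit returns to $\Sigma$. The first re-exit time from the shadow is smooth by the same implicit function argument, with the symmetric energy update $\enS = \enK - \tfrac{f}{2}(u^2 - v^2)$. If the exit lies on $\Sigma$ with $p_y > 0$ the process terminates; otherwise I would chain an additional Stark--Kepler pair. Membership in $\domsigma$ ensures that only finitely many such pairs are needed, so $\mathfrak{S}$ is a finite composition of $C^{\infty}$ maps followed by the linear projection onto the coordinates $(u, p_u)$, and therefore differentiable on $\domsigma$.

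The hard part is justifying transversality at every intermediate shadow crossing, since each one requires a separate invocation of the implicit function theorem in order to obtain a smooth return time. This ultimately reduces to the strict inequality $p_y \neq 0$ at the crossings. Tangential crossings would break the argument, but such degenerate situations are excluded by the definition of $\domsigma$ itself, since they do not yield a genuine return to $\Sigma$; this is the one point of the proof that relies on the qualitative description of the definition domain rather than on a direct computation.
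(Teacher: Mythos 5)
Your proposal is correct and follows essentially the same route as the paper: both decompose $\mathfrak{S}$ into a composition of maps between the successive switching sections $\Sigma_i$ and deduce differentiability from the smoothness of the regularized flows together with smooth crossing times, your implicit-function-theorem step being exactly what the paper encodes in its formula for $\partial \tau/\partial \Ubf$, whose denominator $\frac{\partial s_{i+1}}{\partial \Ubf}\cdot \bX_{i+1} = (u^2+v^2)\,p_y$ is precisely your transversality condition $p_y\neq 0$. The paper additionally writes out the resulting Jacobian \eqref{mappJacobian} explicitly through the variational equations, but the underlying argument coincides with yours.
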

\begin{proof}

Let $\bPhi_s(\tau;{\Ubf},\tau_0)$ and $\bPhi_k(\tau;{\Ubf},\tau_0)$ be
the integral flow of Stark's and Kepler's dynamical systems
\eqref{UDynSystK}, \eqref{UDynSystS}.
%\[
%\dot{\Ubf} = X_s({\Ubf}), \qquad\dot{\Ubf} = X_k({\Ubf}),
%\]
%\[
%X_s({\Ubf}) = \Big(
%2\enS u^2+2fu^3, 2\enS v^2-2fv^3, p_u, p_v
%\Big), \qquad X_k({\Ubf}) = \Big(
%2\enK u^2, 2\enK v^2, p_u, p_v \Big).
%\]
Consider $(u,p_u)\in\domsigma$ and the corresponding orbit in the
Sun-shadow dynamics. Before it goes back to the section $\Sigma$, the
dynamical regime will change $n$ times, with $n$ depending on the
shape of the trajectory. The first regime will be always Stark's, the
last will be Kepler's.  Let us introduce a finite sequence of
sections $\Sigma_i$, $i=0,\ldots,n$ where the dynamics
changes, with $\Sigma_0 = \Sigma_{n}=\Sigma$. Each of them is given
by $s_i({\Ubf})=0$: for the section $\Sigma$ we have $s_i = uv-R$,
while for the intermediate sections, with $i=1,\ldots, n-1$, we have
$s_i=uv+R$ or $s_i=uv-R$ depending on the boundary of the shadow
region that is crossed when the dynamics changes. It is possible to
define the maps $\mathfrak{S}_i$
\begin{equation}
	\begin{split}
	\mathfrak{S}_i: \domsigma_i\subset\R^2 &\to \R^2,\cr
	(u^{i},p^{i}_u) &\mapsto (u^{i+1},p^{i+1}_u),\cr
	\end{split}
	\label{intmaps}
\end{equation}
where
$\domsigma_i$ is made of the points
\[
  {\Ubf}^{i}= (p^{i}_u,p^{i}_v,u^{i},v^{i})^T \in \Sigma_{i}, \qquad v^{i}=\pm R/u^{i}, \qquad p^{i}_v = p_v(p^{i}_u,u^{i}).
  \]  
The Sun-shadow map will be given by

\[\mathfrak{S} = \mathfrak{S}_{n-1} \circ ... \mathfrak{S}_{2}\circ \mathfrak{S}_{1}\circ \mathfrak{S}_{0}.\]
Thus, we have 
\begin{equation}
D\mathfrak{S}(u,p_u) = 
\frac{\partial (u',p'_u)}{\partial (u,p_u)} =
\frac{\partial(u',p'_u)}{\partial(u^{n-1},p^{n-1}_u)} \frac{\partial(u^{n-1},p^{n-1}_u)}{\partial(u^{n-2},p^{n-2}_u)}... \frac{\partial(u^2,p^2_u)}{\partial(u^{1},p^{1}_u)} \frac{\partial(u^{1},p^{1}_u)}{\partial(u,p_u)}
\label{mappJacobian}
\end{equation}
where 
\[
\frac{\partial(u^{i+1},p^{i+1}_u)}{\partial(u^{i},p^{i}_u)} =
A\Big(\bX_{i+1}({\Ubf}_{i+1})  \frac{\partial \tau}{\partial {\Ubf}_{i}}
+ \frac{\partial \bPhi_{i+1}}{\partial {\Ubf}_{i}}(\tau({\Ubf}_{i}),{\Ubf}_{i},\tau_{i})\Big) \frac{\partial {\Ubf}_{i}}{\partial (u^{i},p^{i}_u)},
\]
with 
\[A = \begin{bmatrix}
1 & 0 & 0 & 0 \\
0 & 0 & 1 & 0
\end{bmatrix}.\]
The integral flow $\bPhi_{i+1}$ (resp. $\bX_{i+1}$) is equal to either $\bPhi_s$ or $\bPhi_k$ (resp. $\bX_s$ or $\bX_k$) depending on the regime between the two sections.
The term $\partial \tau/\partial {\Ubf}_{i}$ can be computed as 
\[
\frac{\partial \tau}{\partial {\Ubf}_{i}}
= - \frac{1}{\frac{\partial s_{i+1}}{\partial {\Ubf}}({\Ubf}_{i+1}) \cdot \bX_{i+1}({\Ubf}_{i+1})} \Big(\frac{\partial s_{i+1}}{\partial {\Ubf}}({\Ubf}_{i+1})\Big)^T \ \frac{\partial \bPhi_{i+1}}{\partial {\Ubf}_{i}}(\tau({\Ubf}_{i}),{\Ubf}_{i},\tau_{i}),
\]
see \cite{Simo_1989}, while  $\partial \bPhi_{i+1}/\partial {\Ubf}_{i}$
fulfils
\[
\begin{split}
&\frac{d}{d\tau} \Big( \frac{\partial \bPhi_{i+1}}{\partial \Ubf_{i}}(\tau({\Ubf}_{i}),{\Ubf}_{i},\tau_{i})\Big) = \frac{\partial \bX_{i+1}}{\partial \Ubf}\Big(\bPhi_{i+1}(\tau({\Ubf}_{i}),{\Ubf}_{i},\tau_{i}),\tau({\Ubf}_{i})\Big) \frac{\partial \bPhi_{i+1}}{\partial \Ubf_{i}}(\tau({\Ubf}_{i}),{\Ubf}_{i},\tau_{i}),\cr
&\frac{\partial \bPhi_{i+1}}{\partial \Ubf_{i}}(\tau_{i},{\Ubf}_{i},\tau_{i})= I,\cr
\end{split}
\]
with $I$ the identity matrix and $\tau_{i}$ the value of the fictitious time at the
section $\Sigma_{i}$.

%\[
%  {\bf U}_e \mapsto \bPhi_k(t_e'; {\bf U}_i',t_i') =
%  \bPhi_k(t_e'; \bPhi_s(t_i'; {\bf U}_e,t_e),t_i')
%\]
%where
%\[
%{\bf U}_i' =(u_i',\frac{R}{u_i'},{p_{u}}_i', ***  {p_{v}}_i' \mbox{dato dalla conservazione di }\mathscr{L}_s ***)
%\]

%\[
%F(t_i',u_e,{p_{u}}_e) = [\bPhi_s(t_i'; \tilde{\bf %U}_e(u_e,{p_{u}}_e),t_e)\cdot {\bf e}_1][\bPhi_s(t_i';  \tilde{\bf %U}_e(u_e,{p_{u}}_e),t_e)\cdot{\bf e}_2] - R
%\]

%\def\ov{\overline}

%We use the implicit function theorem.
%\[
%F(\ov{\tau}_i',\ov{u}_e,{\ov{p}_{u}}_e) = 0, \qquad
%\frac{\partial F}{\partial t_i'}(\ov{\tau}_i',\ov{u}_e,{\ov{p}_{u}}_e) \neq %0.
%\]

%\[
%\frac{\partial F}{\partial \tau_i'} =
%\left[\frac{\partial\bPhi_s}{\partial \tau_i'}\cdot{\bf e}_1\right] %\bPhi_s\cdot{\bf e}_2
%+ \bPhi_s\cdot{\bf e}_1\left[\frac{\partial\bPhi_s}{\partial \tau_i'}\cdot{\bf %e}_2\right] = 
%(X_s(\bPhi_s)\cdot{\bf e}_1) v + u (X_s(\bPhi_s)\cdot{\bf e}_2) 
%= \dot{u}v + u\dot{v}
%\]

%The condition $\frac{\partial F}{\partial \tau_i'}\neq 0$ in
%$(\ov{\tau}_i',\ov{u}_e,{\ov{p}_{u}}_e)$ corresponds to a transversal
%crossing of the boundary of the shadow.

%The previous results shows the existence of the Jacobian matrix of the map
%\[
%\frac{\partial (u',p_u')}{\partial (u,p_u)}
%\]

%\[
%  \mathfrak{S}(u,p_u)
%\]

\end{proof}

\begin{proposition}
	The map $\mathfrak{S}$ is not area-preserving.
\end{proposition}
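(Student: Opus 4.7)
My plan is to show that the Jacobian determinant of $\mathfrak{S}$ is not $\pm 1$ at one explicit point of $\domsigma$. Area-preservation with respect to $du \wedge dp_u$ would require $|\det D\mathfrak{S}|\equiv 1$ throughout the domain, so exhibiting a single counterexample closes the proposition.

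First, I would invoke Proposition~\ref{mapDiff} to factor $D\mathfrak{S} = \prod_{i=0}^{n-1} D\mathfrak{S}_i$, one factor per dynamical segment between consecutive shadow-boundary sections $\Sigma_i,\Sigma_{i+1}$. Each $\mathfrak{S}_i$ is the variable-return-time map of a Hamiltonian flow (Stark's or Kepler's) that preserves the 4D symplectic form $\omega = dp_u\wedge du + dp_v\wedge dv$. Each section $\Sigma_i$ is a 2-dimensional surface defined by a shadow-boundary equation $uv=\pm R$ and an integral constraint ($\mathscr{L}_s = \llS$ outside the shadow, $\mathscr{L}_k = \llS + \tfrac{f}{2}R^2$ inside, by Proposition~\ref{HsLsprop}). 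Parametrizing $\Sigma_i$ by $(u^{i},p_u^{i})$ and writing the pullback of $\omega$ as $\rho_i(u^{i},p_u^{i})\,dp_u^{i}\wedge du^{i}$, with $\rho_i$ computable from the constraints that determine $v^{i},p_v^{i}$, the variable-time pullback identity $\mathfrak{S}_i^{*}\omega|_{\Sigma_{i+1}} = \omega|_{\Sigma_i} + dH_i\wedge dT_i$ (where $H_i$ is the active Hamiltonian and $T_i$ the segment duration) yields $\det D\mathfrak{S}_i = (\rho_i + \sigma_i)/\rho_{i+1}$, with $\sigma_i$ the coefficient of $dp_u^{i}\wedge du^{i}$ in the correction term. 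The correction is nontrivial precisely because $\Sigma_i$ is not an energy level.

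Next, I would evaluate $\det D\mathfrak{S}$ at a convenient point, the most natural choice being a brake-orbit fixed point $\widehat{\bx}$ constructed in Section~\ref{proofPBOexists}. There $n=4$, and the symmetry of the orbit about the $x$-axis pairs the two Stark segments and the two Kepler segments, collapsing the product to two effectively independent factors. Using the explicit expressions for $p_v^{i}$ implied by the constraints, a short algebraic calculation — or, more pragmatically, a direct numerical integration of the variational equations at any chosen initial condition in $\domsigma$ — yields a value of $\det D\mathfrak{S}$ different from $\pm 1$. Since Proposition~\ref{mapDiff} guarantees that $\det D\mathfrak{S}$ is continuous on $\domsigma$, the strict inequality persists on an open neighbourhood.

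The main obstacle is the algebraic bookkeeping: tracking the implicit dependence $p_v^{i} = p_v^{i}(u^{i},p_u^{i})$ through the chain rule of Proposition~\ref{mapDiff} and the transitions between $\mathscr{L}_s$ and $\mathscr{L}_k$ at each shadow crossing is delicate, and the surviving factors after telescoping across regime changes are exactly what encodes the energy leaps described by Proposition~\ref{HsLsprop}. A purely numerical evaluation at one representative point in $\domsigma$ bypasses this algebra and is likely the most economical way to seal the conclusion.
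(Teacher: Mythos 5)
Your approach is correct, but it takes a genuinely different (local) route from the paper's (global) one. The paper gives an explicitly numerical proof at the level of areas: it takes a circle $\gamma_0\subset\Sigma$ centred at $(u_C,0)$, samples it with $2\times 10^5$ points, propagates each point through the patched Stark/Kepler flow back to $\Sigma$, reconstructs $\gamma_1=\mathfrak{S}(\gamma_0)$ by cubic splines, and computes the enclosed area with the Gauss--Green formula, finding $A_1\approx 1.9588\times 10^5$ against $A_0\approx 1.9635\times 10^5$. Your criterion — exhibit one point of $\domsigma$ where $|\det D\mathfrak{S}|\neq 1$, using the chain-rule factorization of Proposition~\ref{mapDiff} or a variational integration, and note that continuity makes the defect persist on a neighbourhood — is an equally valid disproof of area preservation and is arguably more economical, since it requires one state-transition computation instead of a closed-curve reconstruction; moreover your symplectic bookkeeping correctly locates the source of the defect, namely that the sections fix $\mathscr{L}_s$ rather than the energy, which leaps at each crossing by Proposition~\ref{HsLsprop}. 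Two cautions on your choice of evaluation point: at $\fpD$ the paper's own eigenvalues give $\lambda_1\lambda_2\approx 0.998$, so the deviation from $1$ there is only a few parts in $10^3$ (of the same order as the relative area change the paper measures), and one should also rule out that a reversibility symmetry of the brake orbit forces the determinant at that symmetric fixed point to be exactly $1$; a generic point of $\domsigma$, which your proposal explicitly allows, is therefore the more robust choice. Finally, be aware that neither your argument nor the paper's is analytic — both ultimately rest on a numerical evaluation, a limitation the paper acknowledges by calling its own argument a ``numerical proof''.
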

\begin{proof}
We give a numerical proof by showing that a circular region of
$\Sigma$ is mapped into a region with a different area.  In the
section $\Sigma$, we can consider a closed curve $\gamma_0$ symmetric with respect to the $u$ axis, defined by
\[
c\,p_u^2+(u-u_C)^2=r_C^2
\]
with 
\[
u_C > r_C + \Big(  \frac{2(\mu+\llS)+fR^2}{2(\mu-\llS)-fR^2}R^2 \Big)^{\frac{1}{4}}.
\]
We chose $c=1$ $\squaren\second\per\kilo\squaren\meter$ so that
$\gamma_0$ becomes a circumference of radius $r_C$ centred at
$(u,p_u)=(u_C,0)$.  We sample it with $m$ points. Each point defines a trajectory in the phase space that we propagate with a Runge-Kutta method of Gauss type, by properly switching dynamics at the boundary between Stark's and Kepler's regimes, until its next intersection
with the section $\Sigma$. In this way, we obtain the image of the initial points under the Sun-shadow map. The resulting points belong to the closed curve $\gamma_1 = \mathfrak{S}(\gamma_0)$. To compute the area $A_1$ of the region enclosed by $\gamma_1$, first we parametrise it by a variable $\theta$. In particular, for each point
on $\gamma_1$ we compute the values of the parameter $\theta$:
\[
\theta^1 = 0; \qquad \theta^j =
\theta^{j-1}+\sqrt{(p_{u}^j-p_{u}^{j-1})^2+(u^{j}-u^{j-1})^2},\qquad j
= 2,\ldots,m
\]
where $(u^j,p_{u}^j)$ are the coordinates of the points, and
$(u^m,p_{u}^m)=(u^1,p_{u}^1)$. Then, we interpolate the points
$(\theta^j,u^j),(\theta^j,p_u^j)$ by cubic splines. Finally, we
compute the area by applying the Gauss-Green formula
\[
\int_{A_1} dudp_u = \int_{\gamma} – p_udu + udp_u,
\]  
and we get
\[
A_{1} = \sum_{j=1}^{m-1}\int_{\theta_j}^{\theta_{j+1}}
-p_u(\theta)\frac{du}{d\theta}(\theta)d\theta +
u(\theta)\frac{dp_u}{d\theta}(\theta)d\theta.
\]
The resulting area $A_{1}$ is different from the area $A_0=\pi r^2_C/c$
of the region enclosed by $\gamma_0$. Indeed, assuming that
\[
\llS=348600 \ \kilo\cubic\meter\per\squaren\second, \quad
f = 9.12\times\nanod \  \kilo\meter\per\squaren\second,\quad
u_C = 1250 \  \kilo\meter ^{1/2},\quad
r_C = 250 \  \kilo\meter ^{1/2},
\]
and sampling the initial circumference with $2\times 10^5$ points, in double precision we get
\[
A_0 \approx 1.9635\times 10^5 \ \kilo\squaren\meter\per\second ,\qquad 
A_1 \approx 1.9588\times 10^5 \  \kilo\squaren\meter\per\second. 
\]

\end{proof}

% ------------------------------------------------------------------
\subsection{Hyperbolic fixed points}% and their invariant manifolds}

Given $\llS \in [\llS^{-},\llS^{+}]$, the periodic orbit of brake type
$\widehat{\bx}(t;\llS)$ gives rise to two fixed points $\widehat{\bUps}_{1,2}$ of the Sun-shadow map:
\begin{equation*}
  \fpD= (\sqrt{\xiE},-\puE), \qquad  \fpS= (-\sqrt{\xiE},\puE),
\end{equation*}  
where $\xiE$ is given by equation \eqref{xiEDef} and 
\[
\puE^2 = 2\enHat\xi_{\tiny E}+2(\mu+\llS)+f\xi_{\tiny E}^2,\qquad \puE>0,
\]
with $\enHat$ the energy of $\widehat{\bx}$ in Stark's regime,
$\enHat\in[\enBar,\ensstar)$.  The point $\fpD$ lies in
  the region included into the smaller rectangle appearing in
  Figure~\ref{fig:ssdom}.
 
We can evaluate the Jacobian matrix of the map at $ \widehat{\bUps}_{j}$, $j=1,2$, using equation \eqref{mappJacobian}: it has two real eigenvalues $\lambda^j_1,\lambda^j_2$ with

\[
0<\lambda_1^{j}<1<\lambda_2^{j}.
\]
For example, by taking $\llS=348600 \ \kilo\cubic\meter\per\squaren\second$, we obtain $\lambda_1^{j}=1.54\times 10^{-4}$ and  $\lambda_2^{j}=6.48\times 10^{3}$, for $j=1,2$.
Thus, the two fixed points are hyperbolic. It follows that the periodic orbit of brake type is unstable.

\subsection{Invariant manifolds}
%Here we describe the numerical technique used for the computation of the invariant manifolds of the fixed points $\bm{\hat \upsilon}_1, \bm{\hat \upsilon}_2$. %Here we discuss only the construction for $\bm{\hat{\upsilon}}_1$, the other is analogous.

Here, we describe the numerical technique used for the computation of the invariant manifolds of the fixed points of the Sun-shadow map. The discussion will be focused on $\fpD$, but the procedure is the same also for $\fpS$. 

We took inspiration from the method in \cite{Hobson_1993}, thought
specifically for planar maps. This algorithm can be applied only to
two-dimensional maps which have saddle-type fixed points and whose
Jacobian matrix, evaluated at these points, has two real eigenvalues
$\lambda_1,\lambda_2$ with $0<\lambda_1<1<\lambda_2$. As
previously shown, the Sun-shadow map $\mathfrak{S}$ and its fixed
point $\fpD$ fulfil these requirements. We describe the algorithm for
the case of one branch $B$ of the unstable manifold. The stable manifold
can be constructed in a similar manner using the inverse map
$\mathfrak{S}^{-1}$. The method consists in dividing the branch of
the manifold into a sequence of primary segments. A primary segment
$V$ is a connected subset of $B$ whose last point is the image of its
first point under the map. Given an initial primary $V_0$ in a
neighbourhood of the fixed point, all the following primaries can be
obtained by iterating the map $m$ times:
\[
V_{i+1} = \mathfrak{S}(V_{i}), \qquad i = 0,\ldots,m-1.
\]
The branch will be given by the union of the computed primaries: 
\[
B = \bigcup_i V_i.
\]
The initial primary is approximated with a segment very close to $\fpD$ along an unstable eigenvector $\bm{\psi}$ of $D\mathfrak{S}(\fpD)$. Then, it is corrected by using the technique described in \cite{LegaGuzzo_2016}, based on the Modified Fast Lyapunov Indicators (MFLI). The lower is the distance of a point from the manifold, the larger is its MFLI. Thus, for each point $\bUps$ in the sample of the primary the correction is done as follows: 
\begin{enumerate}
	\item consider a small neighbourhood of $\bUps$ in the	direction orthogonal to the corresponding primary curve, and sample it uniformly;
	\item compute the MFLI of $\bUps$ and of each point of the sample;
	\item select the point with the larger MFLI. 
\end{enumerate}
There is an issue concerning the iterations of the primaries. We observed that, after a few
iterations, portions of the primaries are lost
because the corresponding trajectories never return to $\Sigma$: by consequence the primaries lose their
nature of connected sets. We decided to relax the definition of primaries given by Hobson by admitting primaries with several connected components, that we still
denote by $V_i$.

We summarize below our algorithm:
\begin{enumerate}
\item approximate the initial primary $V_0$ with a segment aligned with
  the eigenvector $\bm{\psi}$ in a small neighbourhood of $\fpD$: this segment is sampled with $n$ points distributed
  according to the exponential law
  \[
  \bUps_{i}=\bUps_{i-1}+a \ 2^i \ \bm{\psi},\qquad i=1,\ldots,n-2,
  \]
  with ${\bUps}_i=(u_i,p_{u_i})$ and $a\in\R$. The last one,
  $\bUps_{n-1}$, is the image of $\bUps_{0}$ under the map.
  We call $\tilde{V}_0$ the finite set of points approximating $V_0$;
  
\item correct $\tilde{V}_0$ by using the MFLI, as previously described; 
  
\item iterate the corrected $\tilde{V}_0$ once, and obtain the set
  \[
  \tilde{V}_1 = \{\bm{\upsilon'}_{i}=(u'_i,p'_{u_i}), \  i=0,\ldots,n-1\};
  \]
\item interpolate the points in $\tilde{V}_1$ with cubic splines and
  sample more densely the resulting curve (we still denote by
  $\tilde{V}_1$ the new sample);
  
%  by adding new points to the ones already computed; specifically, we
%  distribute the new points linearly with respect to $\theta$ between
%  each couple of points previously computed; given a couple
%  $\bm{\upsilon'}_{i-1},\bm{\upsilon'}_{i}$, we add $3n-(i-1)$ points
%  between them;
\item correct $\tilde{V}_1$ by the MFLI, as done for $\tilde{V}_0$;
\item iterate the corrected $\tilde{V}_1$ under the map $m-1$ times.
\end{enumerate}

\begin{figure}[h!]
  \begin{center}
    \begin{subfigure}{0.47\textwidth}
      \includegraphics[width=1.1\textwidth]{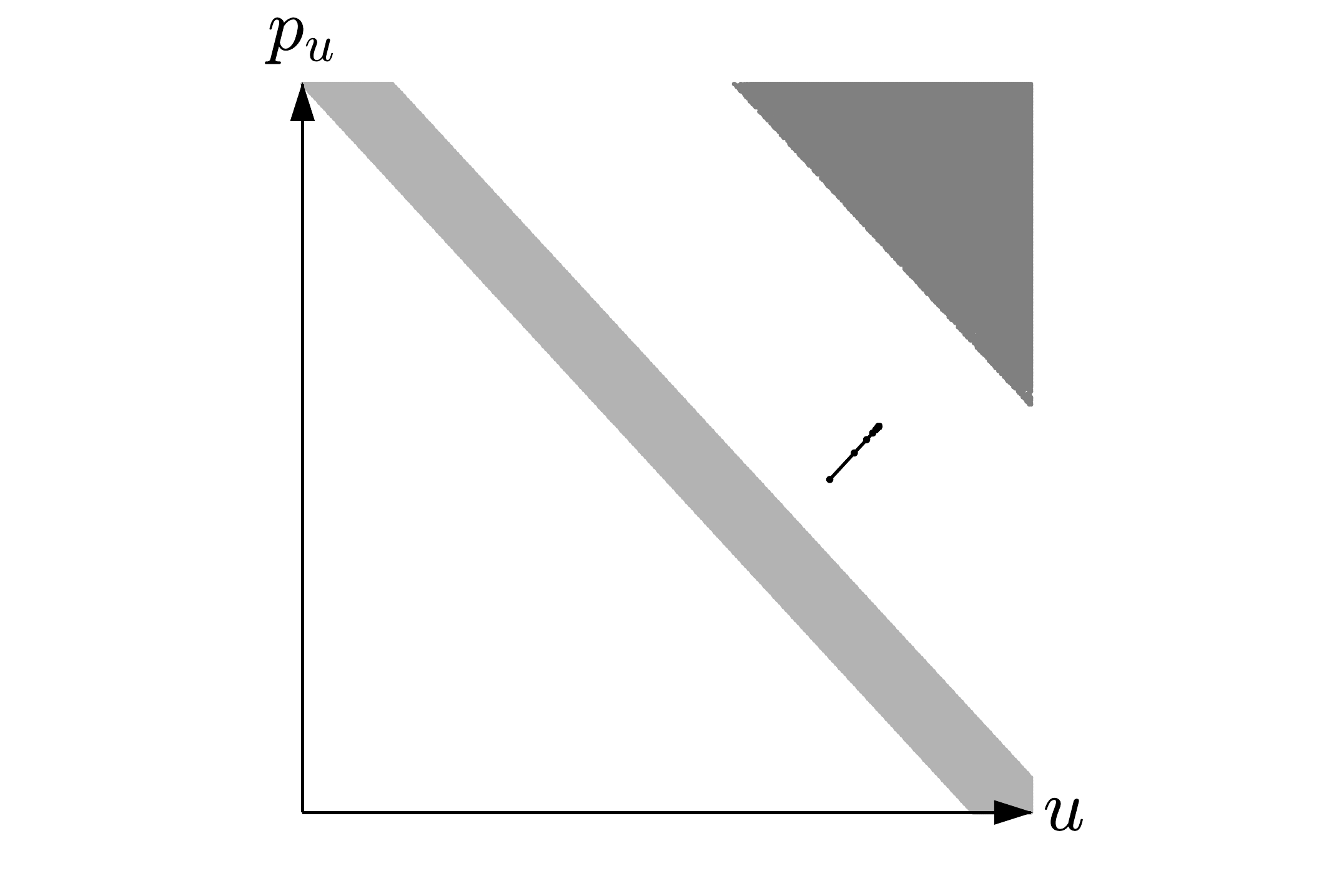}
    \end{subfigure}
    \begin{subfigure}{0.47\textwidth}
      \includegraphics[width=1.1\textwidth]{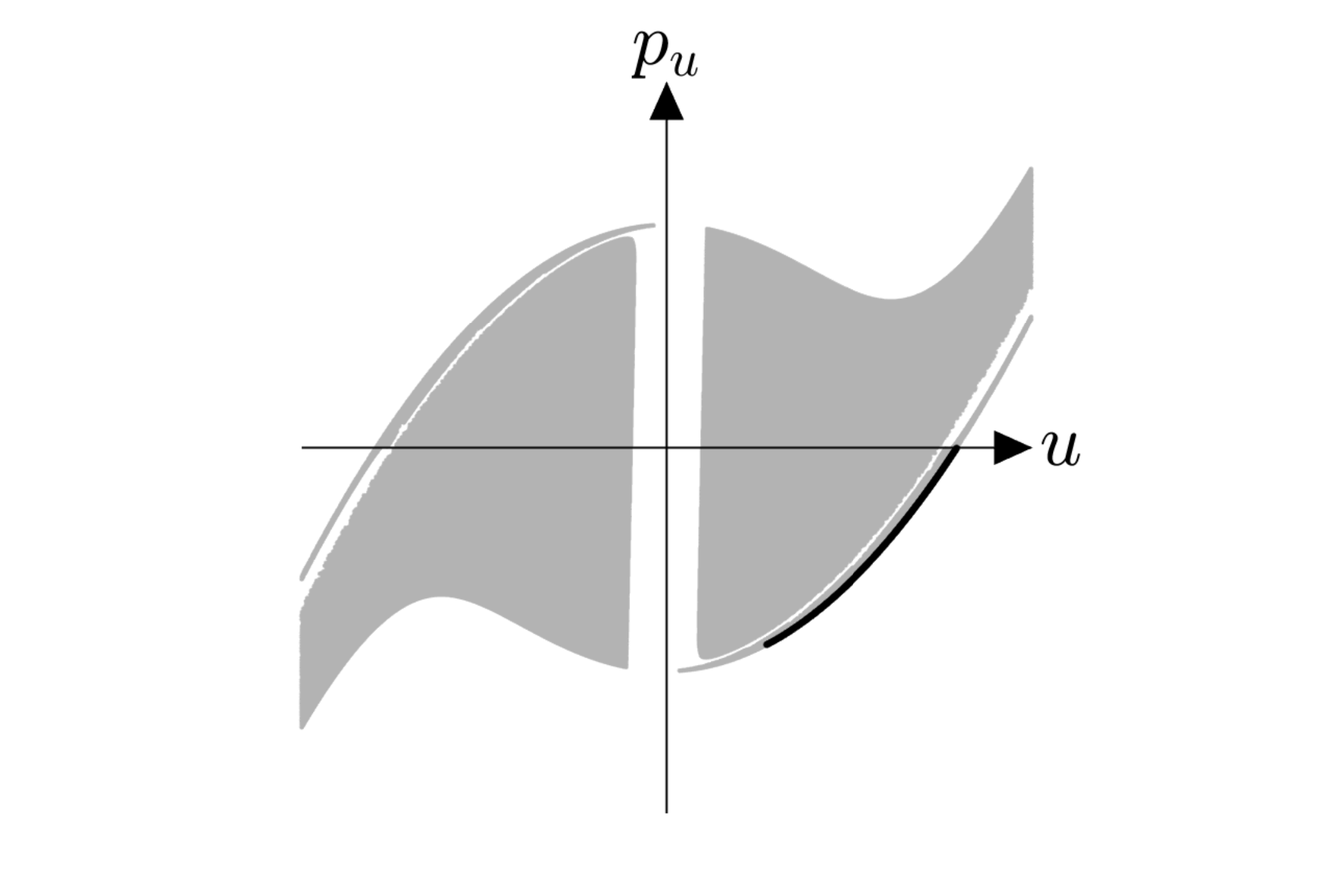}
    \end{subfigure}
    \begin{subfigure}{0.47\textwidth}
      \includegraphics[width=1.1\textwidth]{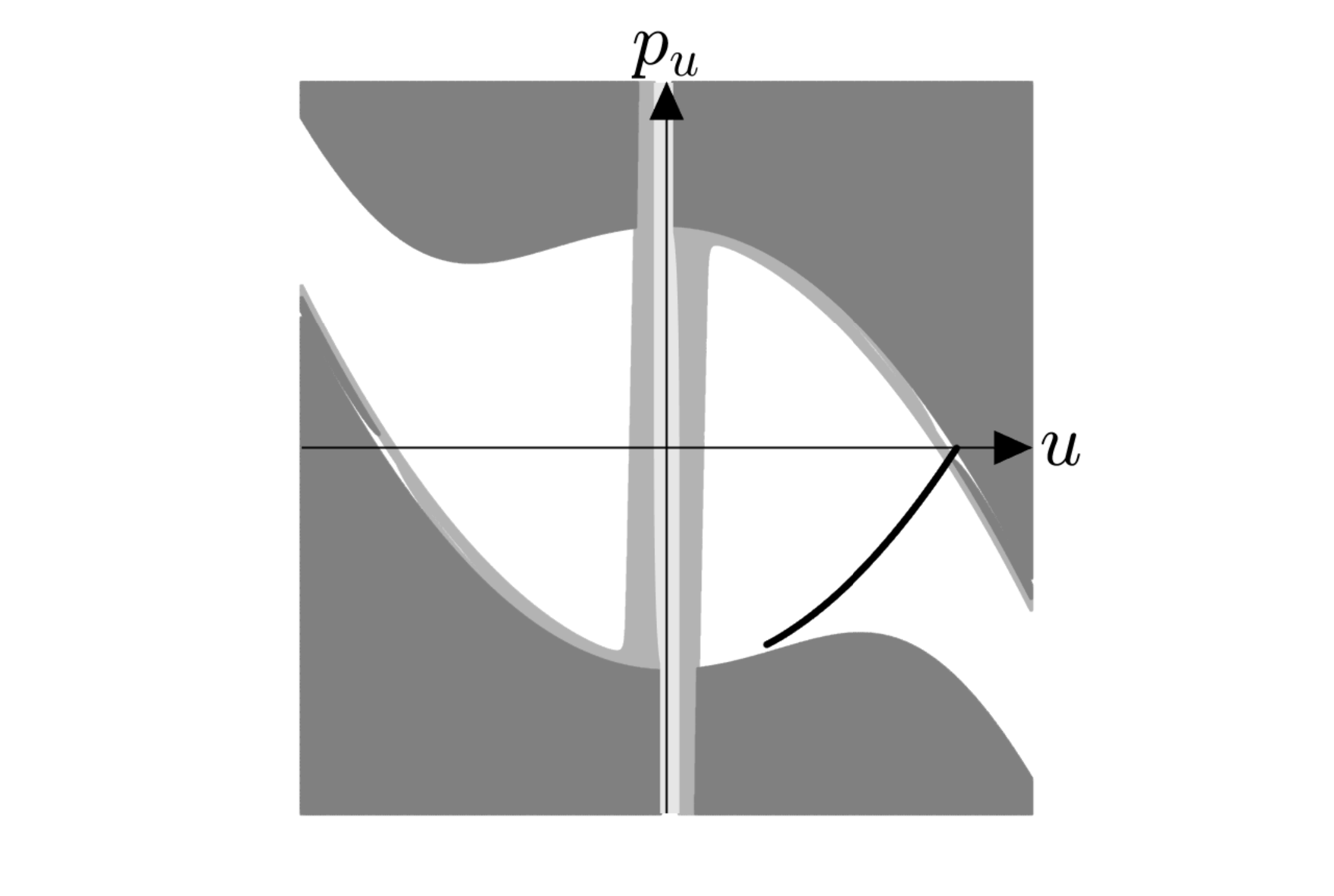}
    \end{subfigure}
    \begin{subfigure}{0.47\textwidth}
      \includegraphics[width=1.1\textwidth]{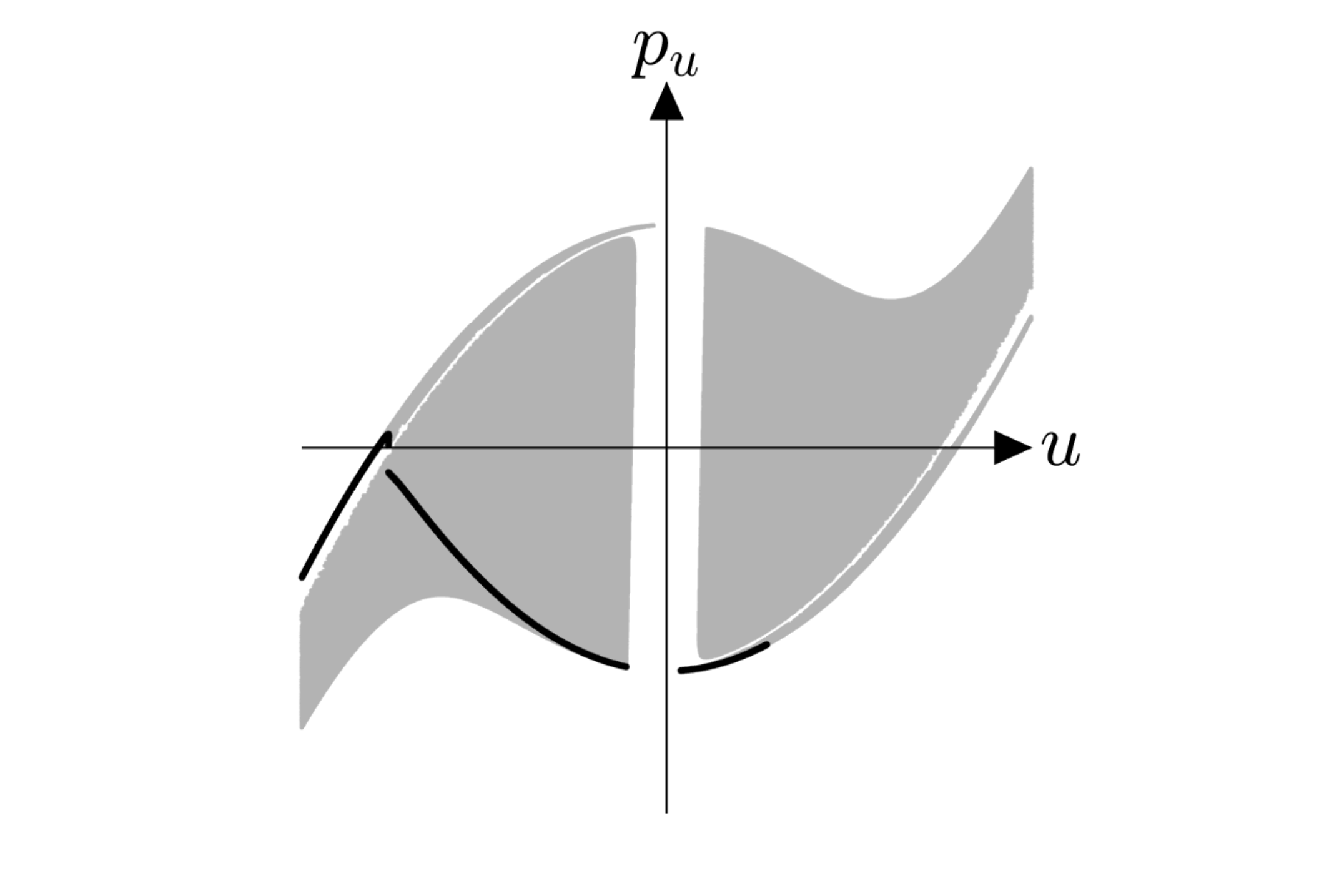}
    \end{subfigure}
    \begin{subfigure}{0.47\textwidth}
      \includegraphics[width=1.1\textwidth]{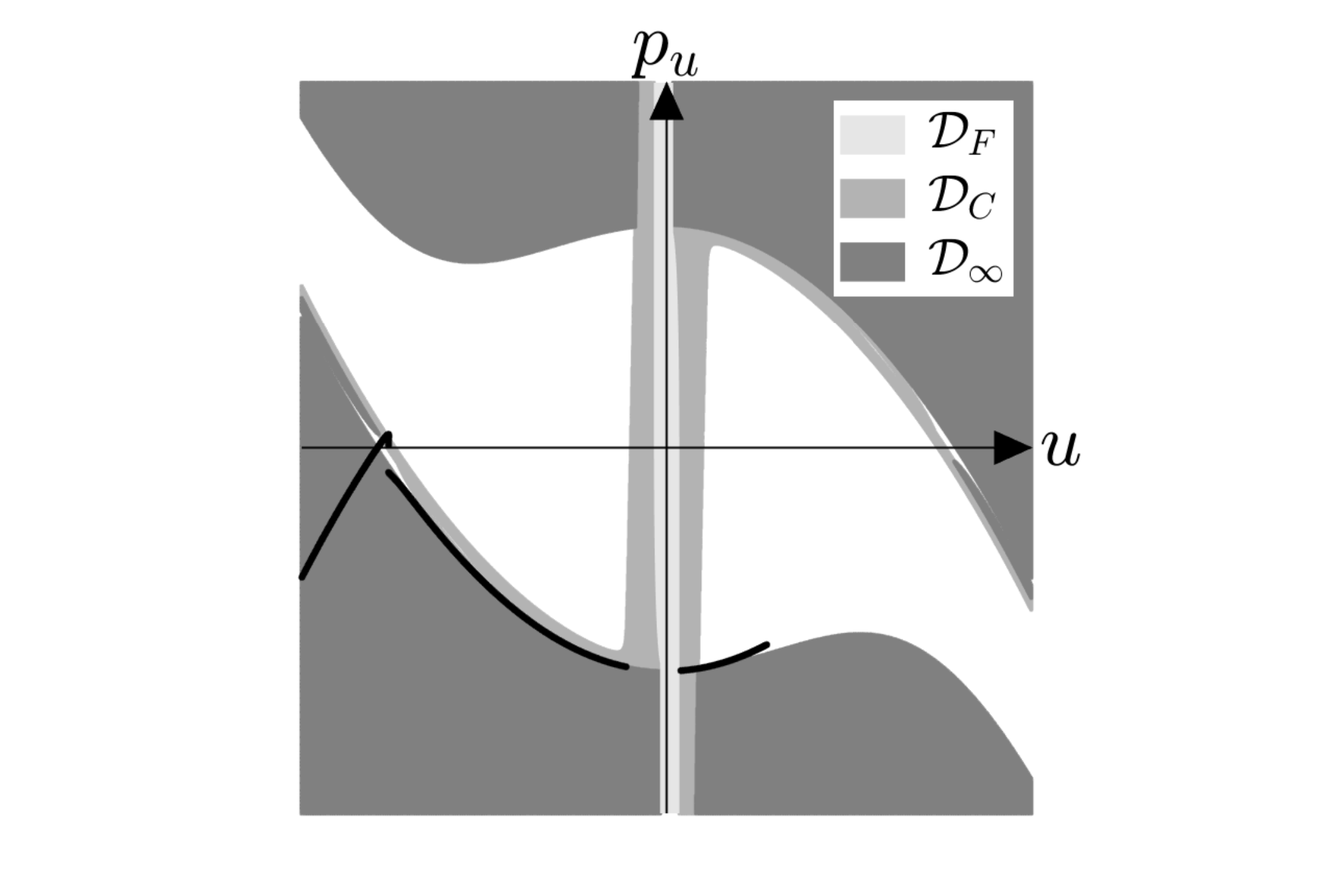}
    \end{subfigure}
    \begin{subfigure}{0.47\textwidth}
      \includegraphics[width=1.1\textwidth]{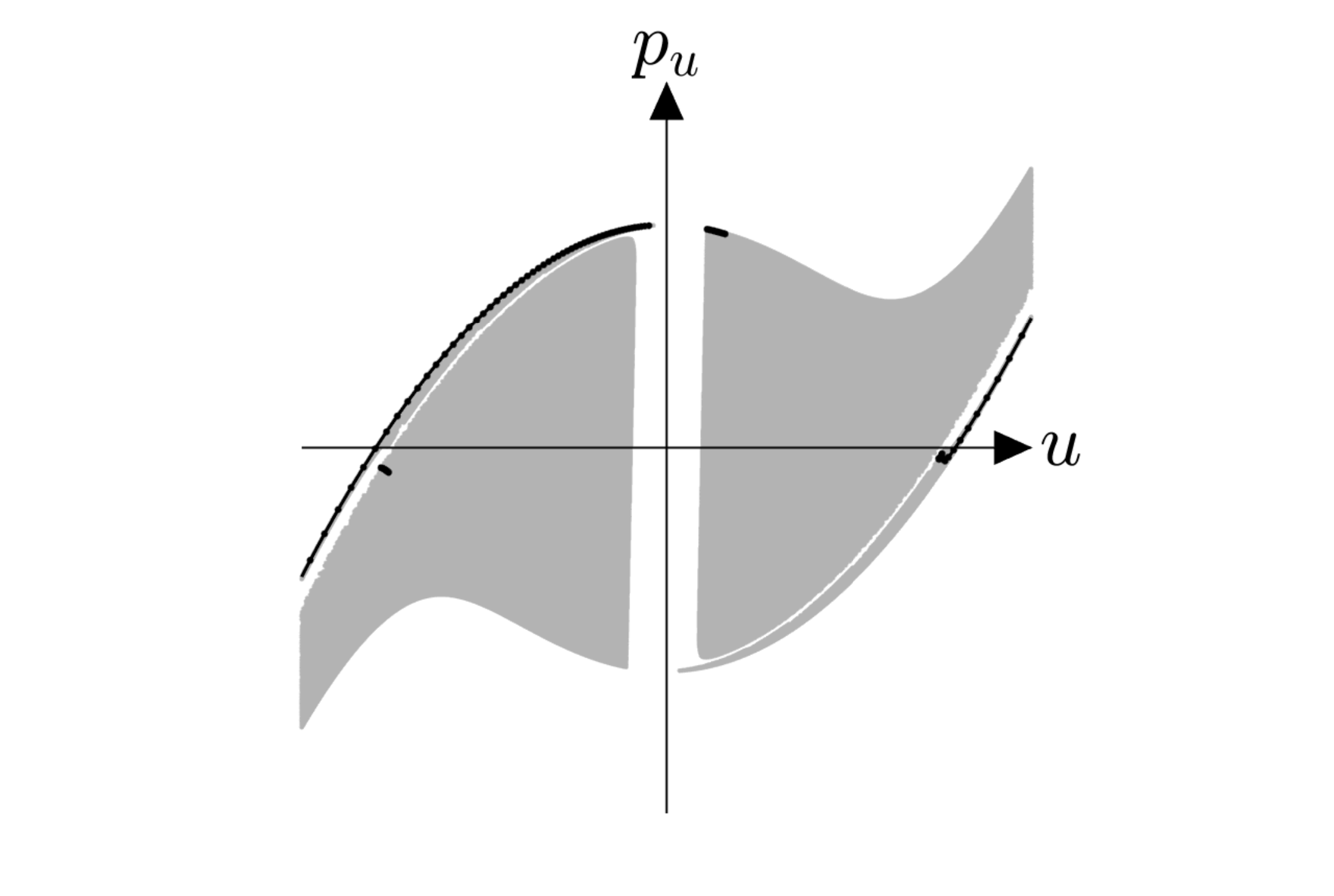}
    \end{subfigure}
  \end{center}
  \caption{Constructing one branch of the unstable manifold of
    ${\bm{\hat \upsilon}}_1$. In the background there are the domain
    ${\cal D}$ and the forbidden regions ${\cal D}_{F}, {\cal D}_{C},
    {\cal D}_\infty$ (left), the image of the map $\mathfrak{S}({\cal
      D})$ (right). The initial primary, drawn in the top left figure,
    is located in the smaller rectangle in
    Figure~\ref{fig:ssdom}. Here, $\llS=348600$
    $\kilo\cubic\meter\per\squaren\second$, $f=9.12\times 10^{-9}$
    $\kilo\meter\per\squaren\second$.}
  \label{fig:unstManConstr}
\end{figure}
	
Figure~\ref{fig:unstManConstr} shows the first steps of the
construction of a branch of the unstable manifold. The initial primary
$V_0$ (top left) is first iterated once under the map (top right). Its image
$V_1$ intersects the forbidden region ${\cal D}_{C}$ (middle left),
and at the second iteration of $\mathfrak{S}$ three components are
left (middle right), two of which lie in the $\{u<0\}$
half-plane. Again, their image $V_2$ intersects the forbidden regions
${\cal D}_{C}$, ${\cal D}_{\infty}$ (bottom left) and the number of components increases at the
successive iteration (bottom
right).

In Figure~\ref{fig:ss} we draw the four (disconnected) branches of the
stable and unstable manifold of the fixed point
$\fpD$.

\begin{figure}[h!]
  \begin{center}
    \includegraphics[width=\textwidth]{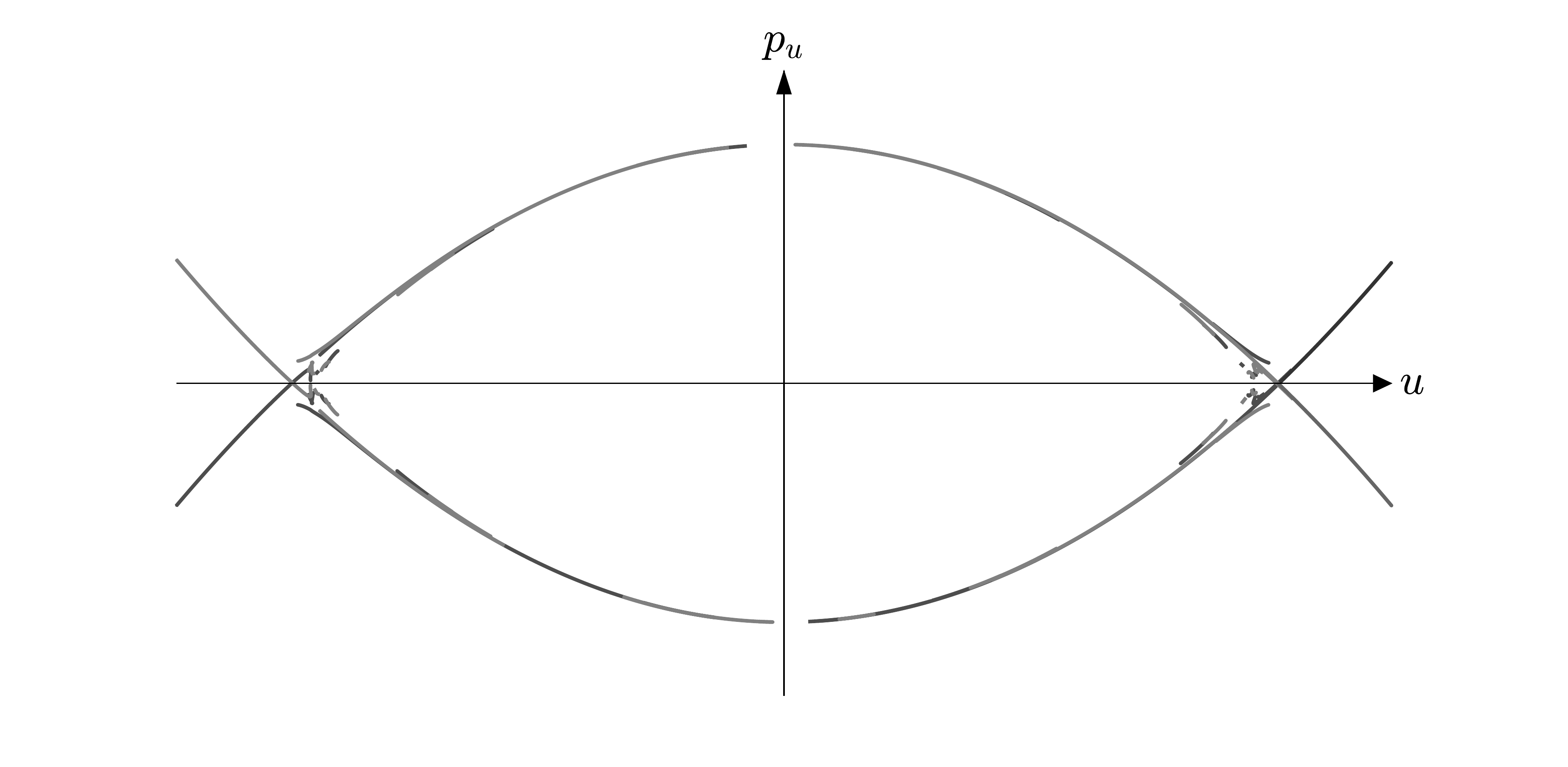}
    \caption{Stable and unstable invariant manifolds of ${\bm{\upsilon}_1}$.}
    \label{fig:ss}
  \end{center}
\end{figure}

\begin{figure}[h!]
  \begin{center}
    \begin{subfigure}{1\textwidth}
      \includegraphics[width=\textwidth]{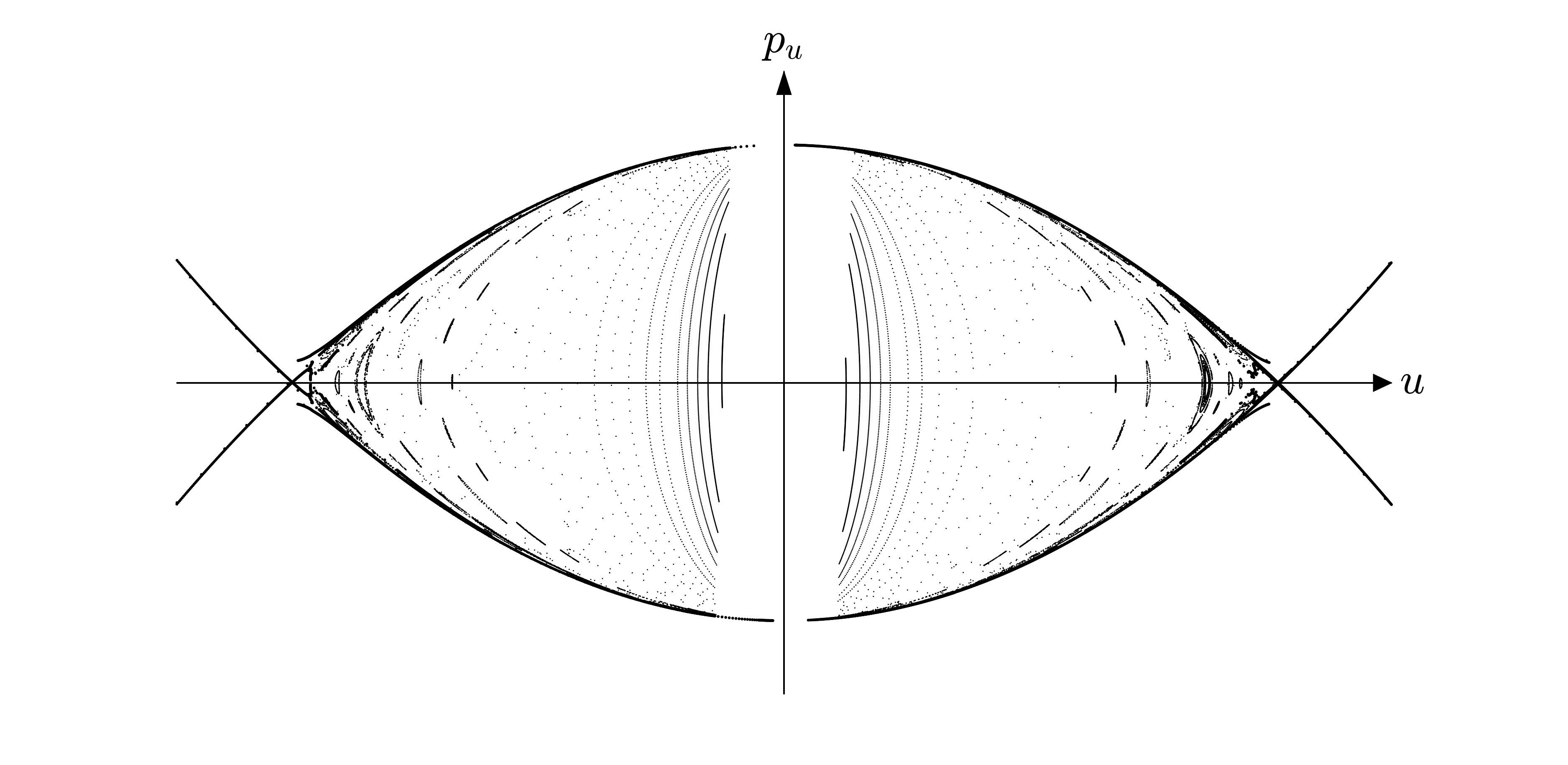}
    \end{subfigure}
    \begin{subfigure}{1\textwidth}
      \includegraphics[width=\textwidth]{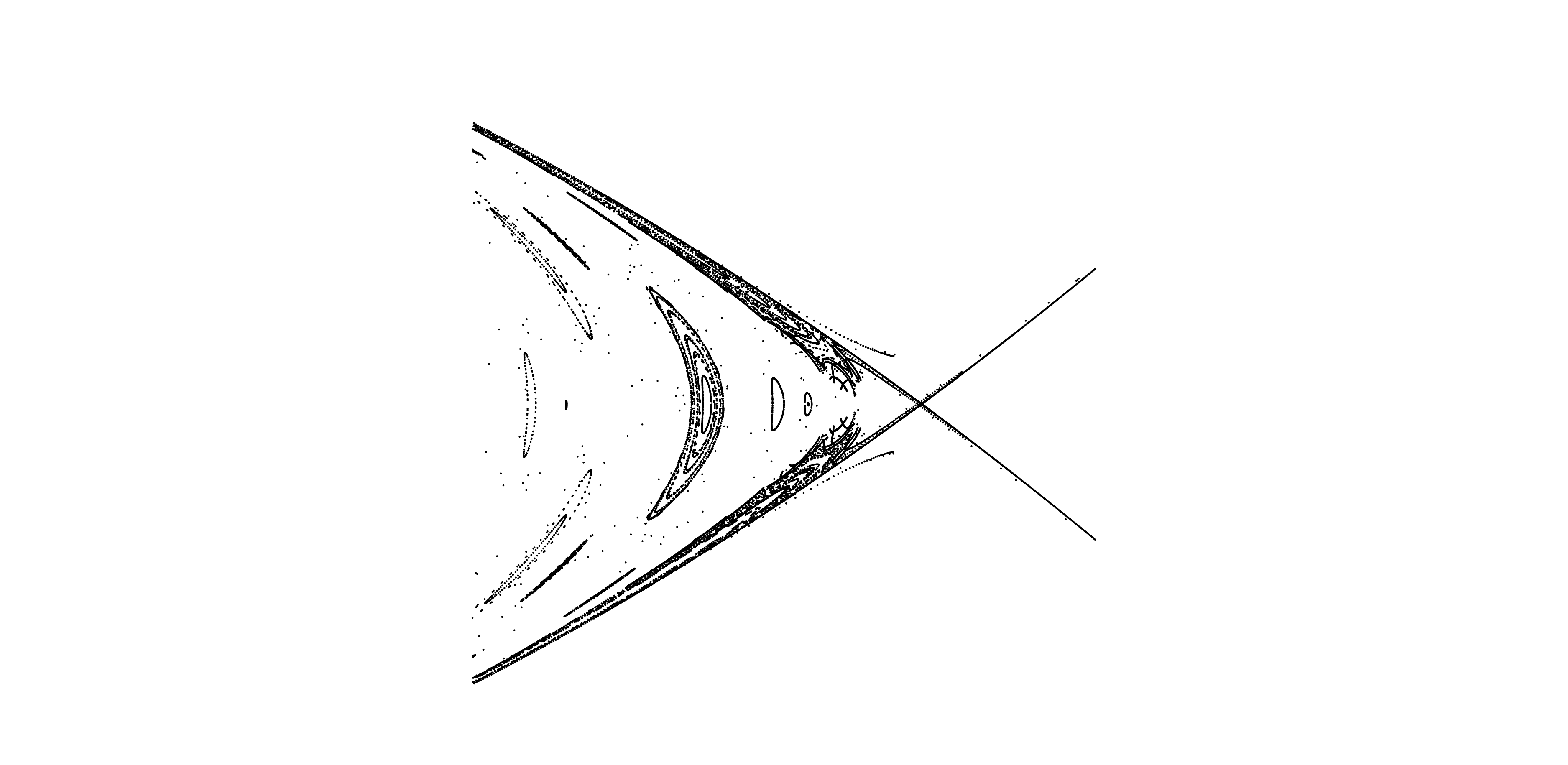}
    \end{subfigure}
    \caption{On the top we show a global picture of the Sun-shadow map, with $\llS=348600$
      $\kilo\cubic\meter\per\squaren\second$, $f=9.12\times 10^{-9}$
      $\kilo\meter\per\squaren\second$. At the bottom we zoom in the region
      close to $\fpD$.}
    \label{fig:ssen}
  \end{center}
\end{figure}

% ========================================================
\section{Conclusions and open questions}

In this paper we have investigated the Sun-shadow dynamics, which is
defined by patching together Kepler's and Stark's dynamics. After
reviewing some relevant features of Stark's problem, we prove the
existence of a family of periodic orbits of brake type. Then, we
introduce the Sun-shadow map, by fixing a quantity which is not
conserved along the flow. This map is differentiable but is not area
preserving. Its domain shows fine structures that underlie interesting
phenomena when the map is iterated many times. There is numerical evidence that the fixed
points $\fpD$, $\fpS$ related to the periodic orbits of brake type are
hyperbolic; their invariant manifolds are constructed by an algorithm
specifically created for this purpose. A global picture of this map is
drawn in Figure~\ref{fig:ssen}, where an enhancement of the region
close to the point $\fpD$ is also displayed. We observe evidence of regular
and chaotic behaviour, with the presence of several islands: we checked
that some of them surround periodic points. In the central region,
where we have smaller values of $|u|$, the plotted points show a regular
structure, similar to the
one of the phase portrait in Figure~\ref{fig:levelCurves}. On the
other hand, the regular behaviour seems to be lost in a neighbourhood
of $\fpD$, along the stable and unstable branches of its invariant manifold.

This study opens some interesting questions about the Sun-shadow
dynamics, which deserve to be further investigated.
First, we may wonder whether the winding number associated to the
trajectories corresponding to one iteration of the map is bounded from
below, see Figure~\ref{fig:ssdomEn}.
Another interesting question is whether we can show that the islands
appearing in Figure~\ref{fig:ssen} correspond to invariant curves
around fixed or periodic points.
Moreover, we can ask ourselves whether Melnikov's method can
be adapted to prove the existence of chaotic dynamics in this case,
where the invariant manifolds of the fixed points $\fpD$, $\fpS$ are
made of several connected components (maybe infinitely many) due to
the presence of the forbidden regions ${\cal D}_\infty$, ${\cal
  D}_{C}$, see Figure~\ref{fig:unstManConstr}.
Finally, possible future developments of this work are the extension
of the Sun-shadow dynamics to the three-dimensional case, the
inclusion of other perturbations (e.g. the Earth oblateness), and
the study of the effect of the penumbra.

% ========================================================
\section{Acknowledgements}
The authors have been partially supported by the MSCA-ITN Stardust-R,
Grant Agreement n. 813644 under the H2020 research and innovation
program.  GFG and GB also acknowledge the project MIUR-PRIN 20178CJA2B
``New frontiers of Celestial Mechanics: theory and applications'', and
the GNFM-INdAM (Gruppo Nazionale per la Fisica Matematica).

% ========================================================
\appendix
\section{Appendix}
\label{s:app}

\begin{table}[H]
	\centering
	\caption{Stark's problem: boundaries of the four regions in the $(\llS, h_s/\sqrt{f})$ plane. The features of the trajectories in the $(x,y)$ plane are qualitatively described. $i$ is the imaginary unit.}
%        Qualitative description of the trajectories at the boundaries of the four regions in the $(\llS, h_s/\sqrt{f})$ plane of Stark's problem.}
	\begin{tabular}{c!{\color{GrayS}\vrule}c}
		
		%%%%%%%%%%%%%%%%%%%%%%%%%%%%%%%%%%%%%%%%%%%
		\rowcolor{Gray}
		\textbf{Regions $I$, $II$} & $\llS=-\mu$; $\enS/\sqrt{f} \in (0,+\infty)$\\
		\hline
		$V(v)$, $U(u)$ roots &
		${v}_1 >0$, ${v}_2 \in i\mathbb{R}$, ${u}_1=0$, ${u}_2 \in i\mathbb{R}$\\
		%\hline
		$v$,$u$ variable &
		$v\in[-{v}_1,{v}_1]$, $u\in(-\infty,+\infty)$ \\
		%\hline
		{trajectories type} &
		two types: periodic, brake, passing through the origin; \\
		& asymptotic to the periodic orbit in the future or in the past\\[3pt]		
		%%%%%%%%%%%%%%%%%%%%%%%%%%%%%%%%%%%%%%%%%%%
		\rowcolor{Gray}
		\textbf{Regions $I$, $IV$ } & $\llS=-\mu$; $\enS/\sqrt{f} \in (-\infty,0)$\\
		\hline
		$V(v)$, $U(u)$ roots &
		${v}_1 >0$, ${v}_2 \in i\mathbb{R}$, ${u}_1>{u}_2=0$ \\
		%\hline
		$v$, $u$ variable &
		$v\in[-{v}_1,{v}_1]$, $u\in(-\infty,-u_1]\cup\{0\}\cup[u_1,+\infty)$ \\
		%\hline
		{trajectories type} & {two types: brake, periodic, passing through the origin; }\\ 
		& {unbounded, self-intersecting, not encircling the origin}\\[3pt]	
		%%%%%%%%%%%%%%%%%%%%%%%%%%%%%%%%%%%%%%%%%%%
		\rowcolor{Gray}
		\textbf{Regions $II$, $IV$} & $\llS=(-\mu,\mu)$; $\enS/\sqrt{f}= -\sqrt{2(\mu+\llS)}$\\
		\hline
		$V(v)$, $U(u)$ roots &
		${v}_1 >0$, ${v}_2 \in i\mathbb{R}$, ${u}_1={u}_2$, $u_1,u_2>0$  \\
		%\hline
		$v$, $u$ variable &
		$v\in[-{v}_1,{v}_1]$, $u\in(-\infty,+\infty)$ \\ 
		%\hline
		{trajectories type} & two types: periodic, brake; asymptotic to the \\
		&  periodic orbit in the future or in the past\\[3pt]
		%%%%%%%%%%%%%%%%%%%%%%%%%%%%%%%%%%%%%%%%%%%
		\rowcolor{Gray}
		\textbf{Regions $II$, $III$} & $\llS=\mu$; $\enS/\sqrt{f} \in (0,+\infty)$\\
		\hline
		$V(v)$, $U(u)$ roots &
		${v}_1 > {v}_2=0$, ${u}_1,{u}_2 \in \mathbb{C} \setminus \R$ \\
		%\hline
		$v$, $u$ variable &
		$	v\in[-{v}_1,{v}_1]$, $u\in(-\infty,+\infty)$ \\ 
		%\hline
		{trajectories type} & {two types: unbounded, not self-intersecting;}\\
		& {unbounded with $y=0$ and $x\geq0$}\\[3pt]
		%%%%%%%%%%%%%%%%%%%%%%%%%%%%%%%%%%%%%%%%%%%
		\rowcolor{Gray}
		\textbf{Region $II$} & $\llS=\mu$; $\enS/\sqrt{f} \in(-\sqrt{2(\mu+\llS)},0)$\\
		\hline
		$V(v)$, $U(u)$ roots &
		${v}_1=0$, $v_2\in i\R$, ${u}_1,{u}_2 \in \mathbb{C}\setminus\R$ \\
		%\hline
		$v$, $u$ variable &
		$v=0$, $u\in(-\infty,+\infty)$ \\
		%\hline
		{trajectories type} & {unbounded with $y=0$ and $x\geq0$}\\[3pt]
		
		%%%%%%%%%%%%%%%%%%%%%%%%%%%%%%%%%%%%%%%%%%%
		\rowcolor{Gray}
		\textbf{Region $IV$} & $\llS=\mu$; $\enS/\sqrt{f} \in(-\infty,-\sqrt{2(\mu+\llS)})$\\
		\hline
		$V(v)$, $U(u)$ roots &
		${v}_1 =0$, ${v}_2 \in i\mathbb{R}$, ${u}_1>{u}_2>0$  \\
		%\hline
		$v$, $u$ variable &
		$v=0$, $u\in(-\infty,-{u}_1]\cup[-{u}_2,{u}_2]\cup[{u}_1,+\infty)$ \\
		%\hline
		{trajectories type} & {two types: brake, periodic, passing through the origin; }\\
		& {unbounded, with $y=0$, $x> 0$}\\[3pt]
		
		%%%%%%%%%%%%%%%%%%%%%%%%%%%%%%%%%%%%%%%%%%%
		\rowcolor{Gray}
		\textbf{Region $III$} & $\llS\in(\mu,+\infty)$; $\enS/\sqrt{f} = \sqrt{-2(\mu-\llS)}$\\
		\hline
		$V(v)$, $U(u)$ roots &
		${v}_1 ={v}_2$, $v_1,v_2>0$, ${u}_1,{u}_2\in \mathbb{C}\setminus\R$  \\
		%\hline
		$v$, $u$ variable &
		$v\in\{\pm v_1\}$, $u\in(-\infty,+\infty)$ \\
		%\hline
		{trajectories type} & {unbounded, not self-intersecting, parabolic}\\
		%\hline
	\end{tabular}
	\label{BelRegB}
\end{table}

\begin{table}[H]
	\centering
	\caption{Stark's problem: boundary points in the $(\llS, h_s/\sqrt{f})$ plane. The features of the trajectories in the $(x,y)$ plane are qualitatively described. $i$ is the imaginary unit.}
%        Qualitative description of the trajectories at three points in the $(\llS, h_s/\sqrt{f})$ plane for Stark's problem.}
	\begin{tabular}{c!{\color{GrayS}\vrule}c}
		
		%%%%%%%%%%%%%%%%%%%%%%%%%%%%%%%%%%%%%%%%%%%
		\rowcolor{Gray}
		\textbf{Regions $I$, $II$, $IV$} & $\llS=-\mu$; $\enS/\sqrt{f} = 0$\\
		\hline
		$V(v)$, $U(u)$ roots &
		${v}_1 >0$, ${v}_2=0$, ${u}_1=0$, ${u}_2 \in i\mathbb{R}$\\
		%\hline
		$v$,$u$ variable &
		$v\in[-{v}_1,{v}_1]$, $u\in(-\infty,+\infty)$ \\
		%\hline
		{trajectories type} &
		two types: periodic, brake, passing through the origin; \\ & asymptotic to the periodic orbit in the future or in the past\\[3pt]				
		%%%%%%%%%%%%%%%%%%%%%%%%%%%%%%%%%%%%%%%%%%%
		\rowcolor{Gray}
		\textbf{Regions $II$, $III$} & $\llS=\mu$; $\enS/\sqrt{f}=0$\\
		\hline
		$V(v)$, $U(u)$ roots &
		${v}_1=0$, $v_2=0$, ${u}_1,{u}_2 \in \mathbb{C}\setminus\R$ \\
		%\hline
		$v$, $u$ variable &
		$v=0$, $u\in(-\infty,+\infty)$ \\
		%\hline
		{trajectories type} & {unbounded with $y=0$ and $x\geq0$}\\[3pt]
		%%%%%%%%%%%%%%%%%%%%%%%%%%%%%%%%%%%%%%%%%%%
		\rowcolor{Gray}
		\textbf{Regions $II$, $IV$} & $\llS=\mu$; $\enS/\sqrt{f} = -\sqrt{2(\mu+\llS)}$\\
		\hline
		$V(v)$, $U(u)$ roots &
		${v}_1 =0$, ${v}_2 \in i\mathbb{R}$, ${u}_1= {u}_2$, $u_1,u_2>0$  \\
		%\hline
		$v$, $u$ variable &
		$v=0$, $u\in(-\infty,+\infty)$ \\
		%\hline
		{trajectories type} & fixed point; asymptotic to the fixed point in \\
		& the future and in the past with $y=0$, $x\geq 0$\\
		%\hline
	\end{tabular}
	\label{BelRegBP}
\end{table}

\bibliographystyle{plain}
\bibliography{refs}

\begin{thebibliography}{10}

\bibitem{Beletsky_1964}
V.~V. Beletski.
\newblock Space-flight trajectories with a constant-reaction accelerator
  vector.
\newblock {\em Translated from Kosmicheskie Issledovaniya}, 2(3):408--413,
  1964.

\bibitem{Beletsky_2001}
V.~V. Beletski.
\newblock {\em Essays on the Motion of Celestial Bodies}.
\newblock Springer, Basel, 2001.

\bibitem{Cox_1984}
D.~A. Cox.
\newblock The arithmetic-geometric mean of gauss.
\newblock {\em L'Enseignement Math{\'e}matique}, 30:275--330, 1984.

\bibitem{FMello_1972}
S.~Ferraz~Mello.
\newblock Analytical study of the {E}arth's shadowing effects on satellite
  orbits.
\newblock {\em Celestial Mechanics}, 5:80--101, 1972.

\bibitem{Hobson_1993}
D.~Hobson.
\newblock An {E}fficient {M}ethod for {C}omputing {I}nvariant {M}anifolds of
  {P}lanar {Maps}.
\newblock {\em Journal of Computational Physics}, 104:14--22, 1993.

\bibitem{HubLem_2013}
C.~Hubaux and A.~Lema{\^i}tre.
\newblock The impact of {E}arth's shadow on the long-term evolution of space
  debris.
\newblock {\em Celestial Mechanics and Dynamical Astronomy}, 116:79--95, 2013.

\bibitem{Hub_etal_2013}
C.~Hubaux, A.-S Libert, N.~Delsate, and T.~Carletti.
\newblock Influence of {E}arth's shadowing effects on space debris stability.
\newblock {\em Advances in Space Research}, 51:25--38, 2013.

\bibitem{Kozai_1963}
Y.~Kozai.
\newblock Effects of the {S}olar-{R}adiation {P}ressure on the {M}otion of an
  {A}rtificial {S}atellite.
\newblock {\em Smithsonian Contributions to Astrophysics}, 6:109--112, 1960.

\bibitem{LegaGuzzo_2016}
E.~Lega and M.~Guzzo.
\newblock Three-dimensional representations of the tube manifolds of the planar
  restricted three-body problem.
\newblock {\em Physica D: Nonlinear Phenomena}, 325:41--52, 2016.

\bibitem{Lidov_1969}
M.~L. Lidov.
\newblock Secular effects in the evolution of orbits under the influence of
  radiation pressure.
\newblock {\em Translated from Kosmicheskie Issledovaniya}, 7(4):467--484,
  1969.

\bibitem{MiNoFa_1987}
A.~Milani, A.~A. Nobili, and P.~Farinella.
\newblock {\em Non-gravitational perturbations and satellite geodesy}.
\newblock IOP Publishing, Bristol, 1987.

\bibitem{Musen_1960}
P.~Musen.
\newblock The {I}nfluence of the {S}olar {R}adiation {P}ressure on the {M}otion
  of an {A}rtificial {S}atellite.
\newblock {\em Journal of Geophysical Research}, 65(5):1391--1396, 1960.

\bibitem{PJShapiro_1960}
R.~W. Parkinson, H.~M. Jones, and I.~I. Shapiro.
\newblock Effects of {S}olar {R}adiation {P}ressure on {E}arth {S}atellite
  {O}rbits.
\newblock {\em Science}, 131(3404):920--921, 1960.

\bibitem{Redmond_1964}
P.~J. Redmond.
\newblock Generalization of the {R}unge-{L}enz {V}ector in the {P}resence of an
  {E}lectric {F}ield.
\newblock {\em Physical Review}, 133(5):1352--1353, 1964.

\bibitem{Simo_1989}
C.~Simo.
\newblock On the {A}nalytical and {N}umerical {A}pproximation of {I}nvariant
  {M}anifolds.
\newblock In Editions Fronti\`eres, editor, {\em Modern {M}ethod in {C}elestial
  {M}echanics}, pages 285--329. Benest, D., Froeschle, C., Observatoire de la
  C\^ote d'azur, 1989.

\end{thebibliography}

\end{document}